\def\f{{\mathcal{F}}}
\def\p{{\mathfrak{p}}}
\def\Sym{{\rm Sym}}
\def\C{\mathbb{C}}
\def\Z{\mathbb{Z}}
\def\Q{\mathbb{Q}}
\def\F{\mathbb{F}}
\def\ord{{\rm ord}}
\newcommand{\inflim}[1]{\displaystyle \lim_{#1 \rightarrow \infty}}
\def\Z{{\mathbb Z}}
\newcommand\Fq{\mathbb{F}_q}
\newtheorem{theorem}{Theorem}[section]
\newtheorem{definition}[theorem]{Definition}
\newtheorem{lemma}[theorem]{Lemma}
\newtheorem{proposition}[theorem]{Proposition}
\newtheorem{proposition-definition}[theorem]{Proposition-Definition}
\newtheorem{corollary}[theorem]{Corollary}
\theoremstyle{definition}
\newtheorem{example}[theorem]{Example}
\theoremstyle{remark}
\newtheorem*{remark}{Remark}
\begin{document}
\title[Fixed-point-free elements]{Fixed-point-free elements of iterated monodromy groups}
\author{Rafe Jones}
\date{\today}
\thanks{The author's research was partially supported by NSF grant DMS-0852826.}

\maketitle

\begin{abstract}
The iterated monodromy group of a post-critically finite complex polynomial of degree $d \geq 2$ acts naturally on the complete $d$-ary rooted tree $T$ of preimages of a generic point.  This group, as well as its pro-finite completion, act on the boundary of $T$, which is given by extending the branches to their ``ends" at infinity. We show that in most cases, elements that have fixed points on the boundary are rare, in that they belong to a set of Haar measure $0$. The exceptions are those polynomials linearly conjugate to multiples of Chebyshev polynomials and a case that remains unresolved, where the polynomial has a non-critical fixed point with many critical pre-images.  The proof involves a study of the finite automaton giving generators of the iterated monodromy group, and an application of a martingale convergence theorem.  Our result is motivated in part by applications to arithmetic dynamics, where iterated monodromy groups furnish the ``geometric part" of certain Galois extensions encoding information about densities of dynamically interesting sets of prime ideals.  
\end{abstract}

\section{Introduction}

Suppose that $f \in \C[x]$ is a polynomial of degree $d \geq 2$, and let $\critf$ be the set of critical points of $f$.   Define the {\em post-critical set} $P_f := \bigcup_{n \geq 1} f^n(\critf)$, where $f^n$ denotes the $n$th iterate of $f$.  Note that $P_f$ consists of all points over which the map $f^n : \C \to \C$ is ramified for at least one $n$.  When $P_f$ is finite, we call $f$ {\em post-critically finite}.  Choose a point $\beta \in \C \setminus P_f$, and denote by $T_\beta$ the set of all preimages of $\beta$ under some iterate of $f$.  Then $T_\beta$ is a complete rooted $d$-ary tree whose $n$th level is given by 
$f^{-n}(\beta)$.   The fundamental group $\pi_1(\C \setminus P_f)$ acts on each set $f^{-n}(\beta)$ by monodromy, and thus gives a subgroup of $\Aut(T_\beta)$ that we call the \textit{iterated monodromy group} of $f$, and write $\img(f)$.   

In the twenty years since their introduction, iterated monodromy groups have become a powerful tool used in a variety of settings.  They are both computable and have deep connections to the dynamics of the underlying polynomial.  Indeed, the action of a set of generators of $\img(f)$ on $T_\beta$ can be given by a simple finite automaton that depends largely on the structure of the set $P_f$ (see Section \ref{background2} for more details).  To illustrate the connections to dynamics, one can associate to $\img(f)$ a \textit{limit dynamical system} whose points are equivalence classes of left-infinite paths in $T_\beta$, and whose map is the shift map. This dynamical system is topologically conjugate to the action of $f$ on its Julia set \cite[Section 3.6 and Theorem 6.4.4]{nek}. 
For this reason the group $\img(z^2 - 1)$ has become known as the \textit{Basilica group}, \label{basilica} since the top half of its Julia set bears a striking resemblance to the profile of the Basilica di San Marco in Venice.  

Applications of iterated monodromy groups abound.  Defined in a more general setting, they have been used by Bartholdi and Nekrashevych to resolve the well-known ``twisted rabbit" problem of J. Hubbard \cite{bartnek}.  They have also attracted interest for their purely group-theoretic properties; for instance, the Basilica group is the first known example separating the classes of amenable groups and sub-exponentially amenable groups \cite{bartvir}.  The more general class of groups generated by finite automata includes the renowned Grigorchuk group \cite{grigorchuk}, the first example of a group of intermediate growth.  The monograph \cite{nek} gives an overview of iterated monodromy groups and their applications, as well as an extensive bibliography.  





Our interest in iterated monodromy groups comes from arithmetic, where properties of the action of $\img(f)$ on the boundary of $T_\beta$ yield information about interesting sets of prime ideals (See Section \ref{connections1} for more details). In particular, we are interested in elements of $\img(f)$ that fix at least one point on the boundary of $T_\beta$, or equivalently, at least one infinite branch of $T_\beta$.  Our main result is that such elements are rare for a large class of $f$.  

Let us introduce some notation.  We may identify $T_\beta$ with the set $X^*$ of all finite words (including the empty word) over an alphabet $X$ containing $d$ letters.  The root of $T_\beta$ corresponds to the empty word, and $f^{-n}(\beta)$ corresponds to $X^n$, the set of all words of length $n$.  The boundary of $X^*$ is the set $X^\omega = \{x_1x_2x_3 \cdots : x_i \in X\}$ of {\em ends} of $X^*$.  Let $G$ be a group of automorphisms of $X^*$, and denote by $G_n$ the image of $G$ under the restriction map $\Aut(X^*) \to \Aut(X^n)$.   Define
\begin{equation} \label{fdef}
\f(G) = \lim_{n \to \infty} \frac{\#\{g \in G_n : \text{$g$ fixes at least one element of $X^n$}\}}{\#G_n}.
\end{equation}
Note that the fraction above is non-increasing, since all lifts to $G_{n+1}$ of an element of $G_n$ with no fixed points again have no fixed points.  Thus the limit in \eqref{fdef} exists.  We may also describe $\f(G)$ by considering the closure $G_\infty$ of $G$ in $\Aut(X^*)$, which is a compact topological group and thus comes with a natural probability measure $\mu$ (the normalized Haar measure).  It is straightforward to show that 
\begin{equation} \label{fdef2} 
\f(G) = \mu(\{g \in G_\infty : \text{$g$ fixes at least one element of $X^\omega$}\}).
\end{equation}
Note that $\f(G)$ is determined by $G$ rather than $G_\infty$, even though we have made reference to $G_\infty$ in \eqref{fdef2}.  We use the notation $G_\infty$ for the closure of $G$ because the latter coincides with the inverse limit of the groups $G_n$.

Following the terminology of \cite[section 1.3]{smirnov}, define $f \in \C[z]$ to be \textit{exceptional} if there exists a finite, non-empty set $\Sigma$ with $f^{-1}(\Sigma) \setminus \critf = \Sigma$. 
Our main result is the following.
\begin{theorem} \label{monodromy}
Let $f \in \mathbb{C}[z]$ be a post-critically finite polynomial of degree at least two, with iterated monodromy group $G$. If $f$ is not exceptional, then $\mathcal{F}(G) = 0$.  
\end{theorem}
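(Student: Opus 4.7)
The plan is to analyze the random variable $\phi_n(g) := \#\{v \in X^n : g(v) = v\}$ counting fixed points of $g$ at level $n$, where $g$ is drawn from Haar measure $\mu$ on $G_\infty$. Since
\[
\mathcal{F}(G) \;=\; \mu\bigl(\{g \in G_\infty : \phi_n(g) \geq 1 \text{ for all } n\}\bigr),
\]
it suffices to show that $\phi_n \to 0$ $\mu$-almost surely. The basic recursive identity is
\[
\phi_{n+1}(g) \;=\; \sum_{\substack{v \in X^n \\ g(v) = v}} \phi_1(g|_v),
\]
where $g|_v$ denotes the section of $g$ at $v$; transitivity of $G_\infty$ on each level $X^n$ (which holds since $\mathbb{C}\setminus P_f$ is connected) combined with Burnside's lemma gives $\mathbb{E}_\mu[\phi_n] = 1$ for every $n$.

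First, I would exploit the self-similar wreath-product embedding $G_\infty \hookrightarrow G_\infty \wr \mathrm{Aut}(X)$ to pin down the distribution of $g|_v$ conditional on $g \in \mathrm{Stab}(v)$. Using the fractal/recurrence properties of $G$ as a self-similar action, the aim is to show that this conditional distribution agrees with Haar measure on $G_\infty$ and that sections at distinct fixed vertices on the same level are conditionally independent. These properties realize $(\phi_n)$ as a non-negative $(\mathcal{F}_n)$-martingale, where $\mathcal{F}_n$ is generated by the action of $g$ on $X^n$, and more structurally identify it with a critical multitype Galton-Watson process whose types are (conjugacy classes of) generators of $G$.

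The next step is to analyze this branching process through the finite automaton generating $G$ (Section \ref{background2}). The generators are indexed by post-critical points $p \in P_f$; the level-$1$ fixed points of the generator $a_p$ correspond precisely to the non-critical preimages of $p$, and the section at such a fixed point is $a_q$ if the corresponding preimage $q$ lies in $P_f$, and is trivial otherwise. A trivial section kills the corresponding lineage of the branching process, so survival with positive probability requires a non-empty subset $\Sigma \subseteq P_f$ such that every $p \in \Sigma$ has at least one non-critical preimage in $\Sigma$ and every non-critical preimage of every $p \in \Sigma$ again lies in $\Sigma$. This is precisely the Smirnov condition $f^{-1}(\Sigma) \setminus \mathrm{Crit}(f) = \Sigma$ defining exceptionality. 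The non-exceptional hypothesis therefore rules out such a trapped $\Sigma$, so the section chain reaches a killed state with probability $1$. Combined with criticality and non-degeneracy of the offspring distribution (Jordan's theorem guarantees derangements at each level, so $\phi_1$ is not $\mu$-a.s.\ constant), the classical extinction theorem for critical Galton-Watson processes and martingale convergence together yield $\phi_n \to 0$ almost surely, hence $\mathcal{F}(G) = 0$.

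The main obstacle is the rigorous justification of the Haar-distribution and conditional-independence properties of sections within the proper closed subgroup $G_\infty \subsetneq \mathrm{Aut}(X^*)$: in the full automorphism group these are immediate consequences of the iterated wreath product structure, but in $G_\infty$ they require genuine use of the self-similar/recurrent structure of the iterated monodromy action, and the full Galton-Watson picture may need to be relaxed to a weaker (e.g.\ "supermartingale with drift") version in which independence is replaced by controlled correlations. A secondary difficulty lies in the combinatorial translation, showing that the only forward-invariant "trapped" sets for the automaton's section dynamics are precisely Smirnov exceptional sets for the complex dynamics of $f$.
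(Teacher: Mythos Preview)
Your martingale setup and the recognition that $\mathbb{E}_\mu[\phi_n]=1$ match the paper's Section~\ref{fpprocess}. But the two genuine gaps you flag at the end are both fatal as stated, and the paper takes a substantially different route to avoid them.

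\textbf{On the Galton--Watson structure.} The conditional independence and Haar-distribution of sections at fixed vertices simply do not hold in $G_\infty$ for an iterated monodromy group; the wreath recursion for a generator such as $a=\sigma(1,b)$ in the Basilica group shows that sections are rigidly determined, not random. The paper never claims or uses any such independence. Instead, the martingale property of $(\phi_n)$ is obtained solely from the existence of a spherically transitive element (Theorem~\ref{mart}), and martingale convergence is used only to conclude that $(\phi_n(g))$ is \emph{eventually constant} $\mu$-a.s.\ (Corollary~\ref{evconstcor}), hence that the set of $g$ fixing infinitely many ends has measure zero (Theorem~\ref{gen}). There is no extinction argument; rather, the paper proves a \emph{dichotomy}: in the non-exceptional case, any $g\in G_\infty$ fixing at least one end of $X^*$ must fix infinitely many, and so lies in a null set. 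The mechanism is that $G$ is contracting, so the sections of $g$ along a fixed end eventually land in the finite set $\mathcal{N}_1=\{h:h|_v=h,\ h(v)=v\ \text{for some nonempty }v\}$, and one then shows (Theorem~\ref{crystal}) that it suffices to prove every element of $\mathcal{N}_1$ fixes infinitely many ends.

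\textbf{On the combinatorial translation.} Your reduction ``survival forces a trapped set $\Sigma$ with $f^{-1}(\Sigma)\setminus\mathrm{Crit}(f)=\Sigma$'' is too coarse: it implicitly treats sections of a random $g$ as generators, which they are not until after contraction, and it does not account for generators that fix their kneading sequence yet also have ``escape'' fixed points leading to finitary restrictions (and hence to infinitely many fixed ends). The paper's actual argument first reduces $\mathcal{N}_1$ to powers of generators in $\mathcal{N}_1$ via the \emph{kneading graph} (Theorem~\ref{n1}), and then uses the tree-like structure of the multi-set of permutations on $X$ (Lemmas~\ref{treefixed}--\ref{treeperms}) to carry out a delicate case analysis (Theorems~\ref{nolongcycle} and~\ref{titus}): a generator in a Moore-diagram cycle can fix a nonempty \emph{finite} set of ends only when the automaton has exactly the Chebyshev form~\eqref{howitis} or~\eqref{howitis2}, or the one-fixed-point form corresponding to~\eqref{oneptexcept}. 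It is this structural rigidity, not a branching-process survival criterion, that singles out the exceptional polynomials.
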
 


Exceptional polynomials have appeared as a distinguished class in a variety of settings; for instance, the affine orbifold lamination attached to certain exceptional polynomials has an isolated leaf \cite[Section 2]{lkr} (see also\cite{lyubich, earlysmirnov, smirnov} for special properties of such polynomials).  It is not difficult to show that if $f$ is exceptional, then $\#\Sigma \leq 2$ (see p. \pageref{exceptdisc}). If $\#\Sigma = 2$, then $f$ is linearly conjugate to $\pm T_d$, where $T_d$ is the Chebyshev polynomial of degree $d$ (see Proposition \ref{chebclass}). Note that linear conjugacy preserves the conjugacy class in $\Aut(X^*)$ of $\img(f)$.  If $\#\Sigma = 1$, then $f$ has a fixed point $z_0$ all of whose preimages are critical except $z_0$ itself, so $f$ is conjugate to a polynomial of the form
\begin{equation} \label{oneptexcept}
z(z-a_1)^{k_1} \cdots (z - a_m)^{k_m},
\end{equation}
where $a_i \in \C \setminus \{0\}$ and $k_i \geq 2$. 

We also compute $\f(G)$ for exceptional polynomials with $\#\Sigma = 2$. Note that $T_d$ is conjugate to $-T_d$ when $d$ is even. 
\begin{proposition} \label{exceptprop}
If $f(z) \in \C[z]$ is conjugate to $T_d$ for $d$ even, then $\f(\img(f)) = 1/4$. If $f$ is conjugate to $\pm T_d$ for $d$ odd, then $\f(\img(f)) = 1/2$. 
\end{proposition}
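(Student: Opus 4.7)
The plan is as follows. First, identify the closure $G_\infty$ of $\img(T_d)$ as the pro-dihedral group $\Z_d \rtimes \Z/2\Z$ acting on $X^\omega \cong \Z_d$ by affine maps $x \mapsto \pm x + c$. Then compute $\f(G)$ as a Haar-measure integral over this compact group.

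For the identification, I would exploit the classical semi-conjugacy $\pi(z^d) = T_d(\pi(z))$, where $\pi(z) = (z + z^{-1})/2 : \C^* \to \C$ is the $2$-to-$1$ cover ramified at $\pm 1$. Fixing a basepoint $\beta \in \C \setminus \{\pm 1\}$ and a lift $\alpha \in \pi^{-1}(\beta)$, the $d^n$ preimages of $\beta$ under $T_d^n$ are naturally indexed by $K \in \Z/d^n\Z$ via
\[
K \longmapsto \pi\bigl(\zeta_{d^n}^K \alpha^{1/d^n}\bigr), \qquad \zeta_{d^n} = e^{2\pi i/d^n},
\]
with $\pi$-fibre $\{\zeta_{d^n}^K \alpha^{1/d^n},\ \zeta_{d^n}^{-K}\alpha^{-1/d^n}\}$. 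Because $\pi$ is branched over $\pm 1$, each generator $\gamma_{\pm 1}$ of $\pi_1(\C \setminus \{\pm 1\}, \beta)$ lifts under $\pi$ to a path from $\alpha$ to $\alpha^{-1}$ (not a loop), and hence acts on the level-$n$ indexing by a \emph{reflection} $K \mapsto -K + c$, while the product $\gamma_1\gamma_{-1}$ acts by a \emph{rotation} $K \mapsto K + c'$. A direct check—either by the self-similar wreath recursion defining $\img(T_d)$ or by computing the winding number around $w = 0$ in $\C^*$ of the lift of $\gamma_1 \gamma_{-1}$—shows that $c'$ is coprime to $d$. So the image of $\img(T_d)$ in $\Aut(X^n)$ is exactly the affine dihedral group
\[
D_{d^n} := \{x \mapsto \epsilon x + c : \epsilon \in \{\pm 1\},\ c \in \Z/d^n\Z\} \cong \Z/d^n\Z \rtimes \Z/2\Z,
\]
and passing to the inverse limit yields $G_\infty \cong \Z_d \rtimes \Z/2\Z$.

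With the identification in hand, the Haar computation in \eqref{fdef2} is elementary. Under the normalized Haar measure on $G_\infty$, an element is a rotation $x \mapsto x + c$ or a reflection $x \mapsto -x + c$ with probability $1/2$ each, and in either case $c$ is Haar-uniform on $\Z_d$. A rotation fixes a point of $X^\omega$ iff $c = 0$, a measure-zero event. A reflection fixes a point iff $2x = c$ is solvable in $\Z_d$, that is, iff $c \in 2\Z_d$. When $d$ is odd, $2 \in \Z_d\units$, so $2\Z_d = \Z_d$ and every reflection has a (unique) fixed point; hence $\f(\img(T_d)) = \tfrac12 \cdot 0 + \tfrac12 \cdot 1 = \tfrac12$. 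When $d$ is even, $[\Z_d : 2\Z_d] = 2$, so a reflection has a fixed point with probability $1/2$; hence $\f(\img(T_d)) = \tfrac12 \cdot 0 + \tfrac12 \cdot \tfrac12 = \tfrac14$. Since $\f$ is invariant under conjugation in $\Aut(X^*)$, the same values hold for any $f$ linearly conjugate to $\pm T_d$.

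The main technical obstacle is the level-by-level identification $G_n = D_{d^n}$, which requires a careful tracking of the lift of $\pi_1(\C \setminus \{\pm 1\})$-generators through the branched cover $\pi$ to see that they act dihedrally, together with a surjectivity check that the rotation generator coming from the commutator is a unit in $\Z/d^n\Z$. This is essentially the standard identification of $\img(T_d)$ with the self-similar ``pro-dihedral'' group in Nekrashevych's framework; once it is in place, the Haar computation above closes the proof.
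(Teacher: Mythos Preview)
Your proposal is correct and takes a genuinely different route from the paper.  The paper stays entirely within the self-similar framework: from Theorem~\ref{compmon} it reads off that $\img(\pm T_d)$ is generated by two involutions $a,b$ in the explicit wreath-recursive forms \eqref{howitis} or \eqref{howitis2}, then invokes the abstract Proposition~\ref{chebcomp}, which says that for any $G=\langle a,b\rangle$ with $a^2=b^2=1$ and $ab$ spherically transitive, $\mathcal F(G)=r/4$ where $r\in\{0,1,2\}$ counts which of $a,b$ fix an end.  Checking $r$ from the recursions gives $r=1$ for even $d$ and $r=2$ for odd $d$.  You instead bypass the automaton description and use the classical semiconjugacy $\pi(z)=\tfrac12(z+z^{-1})$ to identify $G_n$ with the concrete dihedral group acting affinely on $\Z/d^n\Z$, then compute $\mathcal F$ as a Haar integral on $\Z_d\rtimes\Z/2\Z$.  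Your approach is more geometric and makes the answer transparent (solvability of $2x=c$ in $\Z_d$), while the paper's approach is more internal to the machinery it has already built and yields the general Proposition~\ref{chebcomp} as a by-product.  At bottom both arguments exploit the same dihedral structure; they differ only in how that structure is exhibited.

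One small point to tighten: your final sentence appeals to conjugacy-invariance of $\mathcal F$ to pass from $T_d$ to ``$f$ linearly conjugate to $\pm T_d$'', but for odd $d$ the map $-T_d$ is \emph{not} linearly conjugate to $T_d$, so you have not actually treated it.  The fix is minor---$-T_d$ has the same post-critical set $\{\pm 1\}$ and the same branched-cover picture via $\pi$ (indeed $(-T_d)^2=T_{d^2}$), so the identification $G_n\cong D_{d^n}$ goes through verbatim---but you should say so explicitly rather than fold it into the conjugacy remark.
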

Thus the only post-critically finite polynomials $f$ for which $\f(\img(f))$ remains unknown are non-Chebyshev maps conjugate to a map of the form \eqref{oneptexcept}. We remark that the power maps $f(z) = z^d$ are not exceptional, and hence have $\f(\img(f)) = 0$, in contrast to Chebyshev polynomials. 


When $f$ is quadratic, it must be conjugate to $z^2 + c$ for $c \in \C$. The only exceptional polynomial of this form is $f(z) = z^2 - 2$, since those of the form \eqref{oneptexcept} have degree at least 3.  Theorem \ref{monodromy} and Proposition \ref{exceptprop} thus give the result that furnished the original motivation for this project:
\begin{corollary} \label{quadpoly}
Let $f(z) = z^2 + c$ be post-critically finite, and let $G$ be its iterated monodromy group.  Then $\mathcal{F}(G) = 0$ unless $f(z)$ is the Chebyshev polynomial $z^2 - 2$, in which case $\mathcal{F}(G) = 1/4$.    
\end{corollary}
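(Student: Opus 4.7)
The plan is to invoke the two main results just proved in the paper, Theorem~\ref{monodromy} and Proposition~\ref{exceptprop}, after classifying when a polynomial of the form $z^2 + c$ is exceptional. There are therefore only two steps: identify the exceptional members of the family $\{z^2+c : c \in \C\}$, then apply the appropriate preceding result.

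For the classification step, I would use the observation (stated in the excerpt) that any exceptional $f$ has $\#\Sigma \leq 2$. If $\#\Sigma = 1$, then $f$ is conjugate to a polynomial of the form \eqref{oneptexcept}; since that form requires $m \geq 1$ and each $k_i \geq 2$, its degree is at least $3$, so this case cannot occur for quadratic $f$. Hence any exceptional quadratic has $\#\Sigma = 2$, so by Proposition~\ref{chebclass} it is linearly conjugate to $\pm T_2$. A brief affine-conjugation check shows that within the normal form $z^2 + c$ (monic, no linear term), the only conjugate of $\pm T_2 = \pm(z^2 - 2)$ is $z^2 - 2$ itself: conjugating $z^2 + c$ by $\alpha z + \beta$ produces $\alpha z^2 + 2\beta z + \alpha\inv(\beta^2 + c - \beta)$, which forces $\alpha \in \{\pm 1\}$ and $\beta = 0$ to preserve the normal form, and both resulting conjugations fix $c$.

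With the classification in hand, the corollary follows directly. If $c \neq -2$, then $f$ is not exceptional, so Theorem~\ref{monodromy} yields $\f(G) = 0$. If $c = -2$, then $f = T_2$, which is conjugate to $T_d$ for $d = 2$ even, so Proposition~\ref{exceptprop} yields $\f(G) = 1/4$. There is no real obstacle in this deduction: the substantive work lives in the two preceding results, and the only ingredient added here is the elementary verification that the quadratic normal form $z^2 + c$ has trivial affine-conjugation symmetry, together with the degree observation that eliminates the one-point-exceptional case.
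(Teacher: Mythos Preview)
Your proposal is correct and follows essentially the same route as the paper: the paper's justification (given in the paragraph immediately preceding the corollary) is simply that polynomials of the form~\eqref{oneptexcept} have degree at least $3$, so the only exceptional $z^2+c$ is $z^2-2$, and then Theorem~\ref{monodromy} and Proposition~\ref{exceptprop} apply. Your version spells out the affine-conjugation check in more detail (with a harmless slip---monicity forces $\alpha=1$, not $\alpha\in\{\pm1\}$), but the strategy is identical.
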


To prove Theorem \ref{monodromy}, we study groups of automorphisms of rooted trees, and draw heavily on a characterization due to V. Nekrashevych \cite[Theorem 6.10.8]{nek} of which such groups are $\img(f)$ for some post-critically finite $f$.  Along the way we derive some results that apply more generally.  For instance, define $g \in \Aut(X^*)$ to be \textit{spherically transitive} if it acts transitively on $X^n$ for each $n \geq 1$.  Every iterated monodromy group of a polynomial contains a spherically transitive element (see Theorem \ref{char} and Lemma \ref{trans}), which is furnished by monodromy at infinity.  This element plays crucial role in our analysis.  

\begin{theorem} \label{gen}
Suppose that $G \leq \Aut(X^*)$ has a spherically transitive element.  Then $\mu(\{g \in G_\infty : \text{$g$ fixes infinitely many elements of $X^\omega$}\}) = 0$.
\end{theorem}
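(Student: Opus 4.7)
Let $\tau \in G$ be a spherically transitive element, write $d = |X|$, and for $g \in G_\infty$ and $n \geq 0$ set $F_n(g) = \#\{v \in X^n : g(v) = v\}$. The plan is to show that $(F_n)_{n \geq 0}$ is a nonnegative martingale with mean $1$ with respect to the filtration $\mathcal{G}_n$ generated by the restriction map $g \mapsto g|_{X^n}$ on $(G_\infty, \mu)$. Doob's theorem then yields almost sure convergence of $F_n$ to an integrable, hence almost surely finite, limit $F_\infty$. Because any end $w \in X^\omega$ fixed by $g$ contributes a fixed vertex at every level, the number of ends fixed by $g$ is bounded above by $\liminf_n F_n(g) = F_\infty(g)$, which is finite almost surely; this is exactly the desired conclusion.

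For the mean, transitivity of $\tau$ on $X^n$ implies $G_n$ acts transitively on $X^n$, so by orbit--stabilizer $\mu(\{g \in G_\infty : g(v) = v\}) = d^{-n}$ for each $v \in X^n$; summing over $v$ yields $\mathrm{E}[F_n] = 1$. For the martingale property, I would condition on $g|_{X^n} = \bar g$ and let $K$ denote the kernel of the restriction $G_\infty \to G_n$; conditional on $\bar g$, the element $g$ is distributed uniformly on the coset $\tilde g K$ for any fixed lift $\tilde g \in G_\infty$ of $\bar g$. A vertex $vx \in X^{n+1}$ can be fixed only if $\bar g(v) = v$, and in that case the fixed children above $v$ are counted by $\#\{x \in X : k_v(x) = \tilde g_v^{-1}(x)\}$, where $k_v, \tilde g_v \in \Sym(X)$ are the permutations induced on the fibre $\{vx : x \in X\}$ by $k \in K$ and $\tilde g$.

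The decisive input is that the projection $K_v \leq \Sym(X)$ of $K$ at $v$ is transitive on $X$, and this is where the spherically transitive element is used: $\tau^{d^n}$ fixes $X^n$ pointwise and cycles each fibre $\{vx : x \in X\}$, so its image in $K_v$ is a $d$-cycle. Since Haar on $K$ pushes forward to Haar on $K_v$ under the surjective projection, transitivity of $K_v$ forces $\mathrm{Pr}(k_v(x) = y) = 1/d$ for all $x, y \in X$, whence the expected number of fixed children above each fixed $v$ equals $1$. Summing over the $F_n(\bar g)$ fixed vertices of $\bar g$ gives $\mathrm{E}[F_{n+1} \mid \mathcal{G}_n] = F_n$. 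I expect the main obstacle to be precisely this transitivity step: spherical transitivity of $G$ is a global, level-wise condition, and extracting from it the per-vertex local transitivity required by the conditional expectation computation relies on the specific cycle structure of $\tau^{d^n}$; without such a cycle one could not average Burnside-style on each fibre. Once the martingale property is in hand, the rest is a routine application of Doob's convergence theorem to the integer-valued nonnegative martingale $F_n$, which forces almost sure eventual constancy and completes the proof.
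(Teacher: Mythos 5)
Your proposal is correct and follows essentially the same route as the paper: the paper likewise shows the fixed-point counting process is a martingale (Theorem \ref{mart}), using the power $\sigma^{d^{n-1}}$ of the spherically transitive element, which acts trivially on $X^{n-1}$ and as a $d$-cycle on each fibre, to average fixed-point counts, and then applies the nonnegative-martingale convergence theorem to conclude that elements fixing infinitely many ends form a null set. The only cosmetic difference is that you average over Haar measure on the full kernel of $G_\infty \to G_n$, while the paper averages over the cosets of the finite cyclic group generated by that power; the decisive input and the conclusion are identical.
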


We prove Theorem \ref{gen} in Section \ref{fpprocess}, where we define a stochastic process encoding information about fixed-point-free elements of $G_n$.  The presence of a spherically transitive element implies this process is a martingale (Theorem \ref{mart}), and we establish Theorem \ref{gen} using a basic martingale convergence theorem. 

We give two other results that lead up to the proof of Theorem \ref{monodromy}.  A salient feature of $X^*$ is its self-similarity, and we use this to describe elements of $\Aut(X^*)$ recursively.  Let $g \in \Aut(X^*)$, and for a vertex $v \in X^*$ consider the subtrees $vX^*$ and $g(v)X^*$ with root $v$ and $g(v)$, respectively.  Both are naturally isomorphic to $X^*$, and identifying them gives an automorphism $g|_v \in \Aut(X^*)$, called the {\em restriction} of $g$ at $v$.  
See Section \ref{background1} for examples and further definitions.  
We call $G$ {\em contracting} if there is a finite set $\mathcal{N} \subset G$ such that for each $g \in G$, there is $n_g \geq 1$ such that all restrictions of $g$ at words of length at least $n_g$ belong to $\mathcal{N}$.  Roughly, this property means that the action of $g$ is relatively restrained, at least close to the boundary of $X^*$.  In particular, many computations in $G$ can be reduced to finite considerations; see Section \ref{background1} for more details.  It is known that iterated monodromy groups of post-critically finite polynomials are contracting \cite[Theorems 3.9.12 and 6.10.8]{nek}. 
Let \label{ndefs}
\begin{align*} 
\mathcal{N}_1 & = \{g \in G : \text{$g|_v = g$ and $g(v) = v$ for some non-empty $v \in X^*$}\}.
\end{align*}
When $G$ is contracting, $\mathcal{N}_1$ is finite; see Proposition \ref{contracting}.  
\begin{theorem} \label{crystal}
Suppose that $G \leq \Aut(X^*)$ is contracting and has a spherically transitive element.  
If every $g \in \mathcal{N}_1$ fixes infinitely many ends of $X^*$, then $\mathcal{F}(G) = 0$.  
\end{theorem}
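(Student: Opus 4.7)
The plan is to prove that under the given hypotheses, every $g \in G_\infty$ that fixes at least one end of $X^*$ necessarily fixes \emph{infinitely} many ends. Combined with Theorem \ref{gen}, which asserts that the set of $g \in G_\infty$ fixing infinitely many ends has Haar measure zero, this yields $\mathcal{F}(G) = 0$ via \eqref{fdef2}.

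To execute this, I would take any $g \in G_\infty$ fixing an end $x = x_1 x_2 x_3 \cdots \in X^\omega$ and examine the sequence of restrictions $h_n := g|_{x_1 x_2 \cdots x_n}$. Each $h_n$ fixes the shifted end $x_{n+1} x_{n+2} \cdots$. Because $G$ is contracting with some finite nucleus $\mathcal{N}$, and its closure $G_\infty$ shares that same nucleus, all but finitely many $h_n$ lie in $\mathcal{N}$. By the pigeonhole principle there exist $n < m$ with $h_n = h_m$; calling this common element $h$ and setting $v = x_{n+1} \cdots x_m$, the chain of restrictions gives $h|_v = (g|_{x_1 \cdots x_n})|_v = g|_{x_1 \cdots x_m} = h$, while $h$ fixes $v$ because $g$ fixes every initial segment of $x$. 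Thus $h \in \mathcal{N}_1 \subset G$.

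By hypothesis, $h$ fixes infinitely many ends $y^{(1)}, y^{(2)}, \ldots \in X^\omega$. Since $g$ fixes the prefix $x_1 \cdots x_n$ and restricts to $h$ at that vertex, each concatenation $x_1 \cdots x_n y^{(i)}$ is a boundary point fixed by $g$, and these ends are pairwise distinct. Hence $g$ fixes infinitely many ends, completing the reduction to Theorem \ref{gen}.

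The one technical point I expect to require care is the transfer of the contracting property (with the same nucleus $\mathcal{N}$) from $G$ to $G_\infty$, since our $g$ lies in the closure rather than in $G$ itself. This is standard for contracting self-similar groups, following from compactness of $G_\infty$ and continuity of the restriction map (cf.\ \cite[Theorem 3.9.12]{nek}); it should be stated explicitly but needs no new ideas. Everything after that point is purely combinatorial, using only the self-similar structure of $X^*$ and the definition of $\mathcal{N}_1$.
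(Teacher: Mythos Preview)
Your proposal is correct and follows essentially the same approach as the paper: take $g \in G_\infty$ fixing an end, run restrictions along that end, use finiteness of the nucleus and pigeonhole to locate an element of $\mathcal{N}_1$, then lift its infinitely many fixed ends back through the prefix and invoke the measure-zero result. The paper cites Corollary~\ref{evconstcor} rather than Theorem~\ref{gen} at the last step, but these are equivalent here; your explicit flagging of the passage from $G$ to $G_\infty$ for the contracting property is a point the paper leaves implicit.
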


It is not hard to show that when $G$ is contracting, $\mathcal{N}_1$ is torsion (see the end of Section \ref{fpprocess}), and this gives
\begin{corollary} \label{torfree}
Suppose that $G \leq \Aut(X^*)$ is contracting and has a spherically transitive element.  If $G$ is torsion-free, then $\mathcal{F}(G) = 0$.  
\end{corollary}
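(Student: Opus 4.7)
The plan is to deduce Corollary \ref{torfree} as an immediate consequence of Theorem \ref{crystal}, once one establishes the parenthetical claim that $\mathcal{N}_1$ consists of torsion elements whenever $G$ is contracting. Under the torsion-free hypothesis, $\mathcal{N}_1$ then collapses to $\{e\}$, and since the identity vacuously fixes infinitely many ends the hypothesis of Theorem \ref{crystal} holds and the conclusion $\mathcal{F}(G) = 0$ follows.

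To verify the torsion claim, suppose $g \in \mathcal{N}_1$, with witness $v \in X^* \setminus \{\varnothing\}$ satisfying $g|_v = g$ and $g(v) = v$. Using the restriction cocycle identity $(gh)|_v = g|_{h(v)} \cdot h|_v$ together with induction on $n$, I would show that $g^n|_v = g^n$ and $g^n(v) = v$ for every $n \geq 1$: the second assertion is immediate since $v$ is fixed by $g$, and the first uses $g|_{g^{n-1}(v)} = g|_v = g$. Iterating along the vertices $v, v^2, v^3, \ldots$ then yields $g^n|_{v^k} = g^n$ for all $k \geq 1$, so $g^n$ occurs as a restriction of itself arbitrarily deep in the tree. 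Because $G$ is contracting with finite nucleus $\mathcal{N}$, for $k$ sufficiently large every restriction of $g^n$ at level $|v^k|$ lies in $\mathcal{N}$; in particular $g^n \in \mathcal{N}$. Hence $\langle g \rangle \subseteq \mathcal{N}$ is finite and $g$ is torsion.

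Combining the two ingredients: since $G$ is torsion-free, the previous step gives $\mathcal{N}_1 \subseteq \{e\}$, and as $e|_v = e$ and $e(v) = v$ for any non-empty $v$, we have in fact $\mathcal{N}_1 = \{e\}$. The identity automorphism fixes every element of $X^\omega$, and since $|X| = d \geq 2$ this set is uncountable, so in particular it is infinite. Thus every $g \in \mathcal{N}_1$ fixes infinitely many ends, and Theorem \ref{crystal} yields $\mathcal{F}(G) = 0$.

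There is no real obstacle here: the corollary is a short deduction, and the only substantive step is the torsion property of $\mathcal{N}_1$, which is itself a brief consequence of the self-restriction identity and the contracting condition. The main conceptual point is simply recognizing that Theorem \ref{crystal}'s hypothesis is satisfied trivially once $\mathcal{N}_1$ has been shown to be trivial.
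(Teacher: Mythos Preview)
Your proposal is correct and follows essentially the same approach as the paper: both show that every $g \in \mathcal{N}_1$ has all its powers trapped in a finite set (you use the nucleus $\mathcal{N}$ directly from the contracting definition, while the paper routes through $\mathcal{N}_1 \subseteq \mathcal{N}_0$ and Proposition~\ref{contracting}), conclude $g$ is torsion, and then invoke Theorem~\ref{crystal} once $\mathcal{N}_1$ collapses to the identity. The two arguments are minor variants of one another.
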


The Basilica group $B$ is known to be torsion-free \cite{bartnek}, and so Corollary \ref{torfree} proves that $\f(B) = 0$.  

Theorem \ref{crystal} and Corollary \ref{torfree} are proven using only the tools from Section \ref{fpprocess}, which do not use specific facts about iterated monodromy groups.  
On the other hand, in order to use Theorem \ref{crystal} to prove Theorem \ref{monodromy}, we apply a characterization of iterated monodromy groups of post-critically finite polynomials due to Nekrashevych \cite[Theorem 6.10.8]{nek} (we give a restatement in Theorem \ref{char}).  This gives a natural finite generating set $A$ for $\img(f)$.  We introduce a \textit{kneading graph} associated to $A$, and use it to show that every element of $\mathcal{N}_1$ is conjugate to a power of an element of $\mathcal{N}_1 \cap A$ (Theorem \ref{n1}).  Thus we reduce questions about fixed points of the action of elements of $\mathcal{N}_1$ on $X^*$ to the study of the action of elements of  $A$ on $X^*$, and these are directly related to the orbits of the critical points of $f$ (Theorem \ref{compmon}).  In Section \ref{sec: last} we use a strong property of $A$ given in Theorem \ref{char} to show that only very special configurations of $A$ allow for elements of $\mathcal{N}_1$ to fix a finite number of ends of $X^*$.  

While most of the proofs here are group-theoretic, the consequences are of interest to number theorists, and hence we have made an effort to make the exposition relatively self-contained.  
After giving more details on links between our results and number theory (Section \ref{connections1}), we give in Section \ref{background1} just the background necessary to prove Theorem \ref{gen}, Theorem \ref{crystal}, and Corollary \ref{torfree}. These proofs are in Section \ref{fpprocess}. For the remainder of the paper, more background is required, which we describe in Section \ref{background2}.  Sections \ref{sec: treelike}, \ref{sec: kneadgraph}, and \ref{sec: last} contain the rest of the proofs. 

\section{Connections to number theory and arithmetic dynamics} \label{connections1}

We give here some links between iterated monodromy groups and density questions for sets of dynamical interest in arithmetic contexts. Work is ongoing to exploit these connections to produce density results. 

\subsection{Iterated monodromy groups and Galois groups} \label{mono1}

Let $f \in \C[z]$ be post-critically finite. The action of $\imgbar(f)$ on the set $f^{-n}(\beta)$, where $\beta$ is outside the post-critical set, is given by monodromy, and we refer to this quotient as $G_n$. On the other hand, the Galois group $\tilde{G}_n := \Gal(K_n/\C(t))$, where $K_n$ is the splitting field of the polynomial $f^n(z) - t \in \C(t)[z]$, has a natural action on the $d^n$ roots of $f^n(z) - t$ over $\C(t)$.   It is well-known (see e.g. \cite[Theorem 8.12]{forster}) that $G_n \cong \tilde{G}_n$, and the corresponding actions on $f^{-n}(\beta) \subset \C$ and $f^{-n}(t) \subset \overline{\C(t)}$ are conjugate subgroups of $S_{d^n}$.  We summarize this in the following proposition, which is essentially {\cite[Proposition 6.4.2]{nek}}.
\begin{proposition} \label{cxgal}
The profinite iterated mondromy group $\overline{\img}(f)$ is isomorphic to the Galois group of $K_\infty$ over $\C(t)$, where $K_\infty = \bigcup_{n \geq 1} K_n$.  Moreover, the corresponding actions on the preimage trees $T_\beta \subset \C$ and $T_t \subset \overline{\C(t)}$ are conjguate. 
\end{proposition}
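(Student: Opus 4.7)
The plan is to assemble the finite-level isomorphisms $G_n \cong \tilde{G}_n$ (recalled just before the statement from \cite[Theorem 8.12]{forster}) into a single profinite isomorphism, and to check that the tree actions correspond level by level.

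First, by definition $\imgbar(f)$ is the closure of $\img(f)$ in $\Aut(T_\beta) = \invlim \Aut(X^n)$, and this closure coincides with $\invlim G_n$, the transition maps being the natural surjections $G_{n+1} \twoheadrightarrow G_n$ induced by the tree projection $f^{-(n+1)}(\beta) \to f^{-n}(\beta)$, $\gamma \mapsto f(\gamma)$. On the Galois side, $K_\infty = \bigcup_{n \geq 1} K_n$ is an infinite Galois extension of $\C(t)$ (as a nested union of finite Galois extensions), and the fundamental theorem of infinite Galois theory gives $\Gal(K_\infty/\C(t)) = \invlim \tilde{G}_n$, with transition maps given by restriction $\tilde{G}_{n+1} \twoheadrightarrow \tilde{G}_n$ corresponding to the tower $K_n \subset K_{n+1}$.

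Second, I would verify that the finite-level isomorphisms $G_n \cong \tilde{G}_n$ are compatible with these two systems of transition maps. The key point is that both transition maps are induced by ``apply $f$'' on the relevant level-$(n+1)$ objects: geometrically, $f$ sends $f^{-(n+1)}(\beta)$ onto $f^{-n}(\beta)$; algebraically, if $w$ is a root of $f^{n+1}(z) - t$ in an algebraic closure of $\C(t)$, then $f(w)$ is a root of $f^n(z) - t$, and this accounts for the containment $K_n \subset K_{n+1}$. Choosing a coherent system of roots of $f^n(z)-t$ in $\overline{\C(t)}$ (equivalently, a compatible system of analytic continuations along a fixed path from $t$ to $\beta$) makes this compatibility manifest, and passing to the inverse limit then yields the isomorphism $\imgbar(f) \cong \Gal(K_\infty/\C(t))$.

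For the conjugacy of the tree actions, the identification $T_\beta \leftrightarrow T_t$ is given level by level by the same finite-level isomorphism of fibres: it sends $\gamma \in f^{-n}(\beta)$ to the corresponding root of $f^n(z) - t$ specified by the chosen system of paths. Since both tree structures use $f$ as the projection map, this identification is an isomorphism of rooted trees, and by construction the monodromy action on the geometric tree and the Galois action on the algebraic tree are intertwined at every finite level; taking the inverse limit gives the conjugacy of the profinite actions on $T_\beta$ and $T_t$. The only real subtlety is making a choice of algebraic-geometric data (a path from $t$ to $\beta$, or equivalently a coherent system of roots in $\overline{\C(t)}$) that works simultaneously at every level, so that the various finite-level isomorphisms assemble into a single compatible inverse system; once this choice is fixed, the rest reduces to invoking $G_n \cong \tilde{G}_n$ level by level and passing to inverse limits.
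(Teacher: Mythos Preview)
Your proposal is correct and is exactly the intended argument. In fact the paper does not supply a proof at all: it states the proposition as a summary of the finite-level isomorphisms $G_n \cong \tilde G_n$ just recalled from \cite[Theorem 8.12]{forster}, and attributes the result to \cite[Proposition 6.4.2]{nek}. Your write-up simply makes explicit the passage to inverse limits that the paper leaves implicit.
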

Proposition \ref{cxgal} prompted the introduction of iterated monodromy groups, as a tool for computing the group $\tilde{G}_\infty$ \cite[p.174]{nek}. In the remainder of this section, we give connections of iterated monodromy groups to arithmetic probelms, which proceed via the link to Galois theory in Proposition \ref{cxgal}. 

\subsection{Density problems: global fields}

Let $k$ be a perfect field of characteristic $p \geq 0$, let $\overline{k}$ be an algebraically closed field containing $k$, and suppose that $f \in k[x]$ has degree $d \geq 2$ that is prime to $p$.  For the moment we do not assume that $f$ is post-critically finite.  
Let $K_n$ be the splitting field of $f^n(x) - t$ over $k(t)$, and put $K_\infty = \bigcup_n K_n$.  For $n \leq \infty$, define the {\em arithmetic monodromy group} $A_n$ to be the Galois group of $K_n$ over $k(t)$.  The field of constants of $K_n$ is $\overline{k} \cap K_n$, which we denote by $k_n'$.  The {\em geometric monodromy group} $G_n$ is the normal subgroup of $A_n$ whose elements restrict to the identity on $k'_n$.  Clearly $A_n/G_n$ is isomorphic to the Galois group of $k'_n/k$, and hence we have an exact sequence
\begin{equation} \label{es}
1 \to G_n \to A_n \to \Gal(k'_n/k) \to 1
\end{equation}    
for each $n \leq \infty$.
We may also take a specialization $t = t_0 \in k$, thereby obtaining a specialized form of \eqref{es}:
\begin{equation} \label{es2} 
1 \to G_n(t_0) \to A_n(t_0) \to \Gal(k'_n/k) \to 1.
\end{equation}    
Note that the extension $k'_n/k$ of constants is independent of specialization.   
For $n \leq \infty$, the groups $A_n$ and $G_n$ both act naturally on the set $R_n$ of roots of $f^n(x) - t$ over $k(t)$, while $A_n(t_0)$ and $G_n(t_0)$ act on the set $R_n(t_0)$ of roots of $f^n(x) - t_0$ over $k$.  We thus define
\begin{equation} \label{arithF}
\mathcal{F}(A_\infty) = \lim_{n \to \infty} \frac{\#\{g \in A_n : \text{$g$ fixes at least one root of $f^n(x) - t$}\}}{\#A_n},
\end{equation}
with similar definitions for $\mathcal{F}(G_\infty), \mathcal{F}(A_\infty(t_0)),$ and  $\mathcal{F}(G_\infty(t_0)).$  We remark that the set $\bigcup_{n \geq 1} R_n$ has a natural structure as a complete $\deg f$-ary rooted tree, while the same is true of $\bigcup_{n \geq 1} R_n(t_0)$ provided that $f^n(x) - t_0$ is separable for all $n \geq 1$ (or equivalently, there are no critical points of $f$ mapping to $t_0$ under any iterate of $f$).  We can thus identify $\bigcup_{n \geq 1} R_n$ with $X^*$, and $A_n, G_n, A_n(t_0), G_n(t_0)$ with subgroups of $\Aut(X^*)$.  With this identification, \eqref{arithF} is the same as \eqref{fdef}.  

If $f \in \C[x]$ is a post-critically finite polynomial, its coefficients must satisfy algebraic relations imposed by the self-intersections of the orbits of the critical points, and hence $f$ is defined over a finite extension $k$ of $\mathbb{Q}$.  We may take $\overline{k} = \C$, and then \eqref{es} with $n = \infty$ becomes  
\begin{equation} \label{imges} 
1 \to \img(f) \to A_\infty \to \Gal(k'_\infty/k) \to 1
\end{equation}    

Let $k$ be a global field, that is, a finite extension of $\Q$ or a finite extension of the function field $\F_q(t)$ of $\mathbb{P}^1$ over the finite field with $q$ elements, and we take the ring of integers $O_k$ to be the integral closure in $k$ of $\Q$ or $\Fq[t]$.  We wish to have a notion of size for a set of prime ideals in $O_k$.  
\begin{definition} \label{dirichlet}
Let $k$ be a global field and $\mathcal{P}$ be a set of primes in $O_k$.  The {\em Dirichlet density} of $\mathcal{P}$ is 
$$
\delta(\mathcal{P}) = \limsup_{s \rightarrow 1^{+}}  \frac{\sum_{\p \in \mathcal{P}} \;  N \p^{-s}}{\sum_{\p \subset O_k} \;  N\p^{-s}}, 
$$ 
where $N \p$ is the number of elements in the field $O_k/\p O_k$.  
\end{definition}

The Chebotarev Density theorem allows one to relate the Dirichlet density of various naturally-occuring sets of primes in $O_k$ to group-theoretic properties of the Galois groups of certain extensions of $k$.  The following theorem is an instance of this.  
\begin{theorem}[Theorem 2.1, \cite{quaddiv}] \label{numfieldden}
Let $k$ a number field with ring of integers $O_k$, and let $f \in O_k[x]$ with $a_0, t_0 \in O_k$.  Let $\mathcal{P}(f, t_0)$ be the set of primes dividing at least one element of the sequence $f^n(a_0) - t_0, n \geq 1$.  Suppose that $f^n(x) - t_0$ is separable for all $n \geq 1$, and let $A_\infty(t_0)$ be as in \eqref{es2}.  Then 
$$\delta(\mathcal{P}(f, t_0)) \leq \mathcal{F}(A_\infty(t_0)).$$
\end{theorem}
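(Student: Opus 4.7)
The plan is to convert the divisibility $\mathfrak{p} \mid f^n(a_0)-t_0$ into the weaker polynomial condition that $f^n(x)-t_0$ has a root modulo $\mathfrak{p}$, then bound the density of such primes using Chebotarev. For each $n \geq 1$ set
$$\mathcal{Q}_n := \{\mathfrak{p} \subset O_k : f^n(x) - t_0 \text{ has a root in } O_k/\mathfrak{p}\}.$$
The separability hypothesis guarantees that $f^n(x)-t_0$ has $d^n$ distinct roots in $\overline{k}$, so its splitting field $K_n(t_0)/k$ is a finite Galois extension unramified outside a finite set of primes, with $A_n(t_0) = \Gal(K_n(t_0)/k)$ acting faithfully on $R_n(t_0)$. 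For $\mathfrak{p}$ unramified in $K_n(t_0)$, the factorization type of $f^n(x)-t_0$ modulo $\mathfrak{p}$ matches the cycle structure of the Frobenius on $R_n(t_0)$, so $\mathfrak{p} \in \mathcal{Q}_n$ iff the Frobenius conjugacy class has a fixed point. Chebotarev then yields
$$\delta(\mathcal{Q}_n) = \frac{\#\{g \in A_n(t_0) : g \text{ fixes an element of } R_n(t_0)\}}{\#A_n(t_0)}.$$

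Next I would show that $\mathcal{P}(f,t_0) \subseteq \mathcal{Q}_N$ up to a set of density zero, for every $N \geq 1$. A direct computation gives the monotonicity $\mathcal{Q}_{n+1} \subseteq \mathcal{Q}_n$: if $\alpha \bmod \mathfrak{p}$ is a root of $f^{n+1}(x)-t_0$, then $f(\alpha) \bmod \mathfrak{p}$ is a root of $f^n(x)-t_0$. Any $\mathfrak{p} \in \mathcal{P}(f,t_0)$ divides $f^m(a_0)-t_0$ for some $m \geq 1$; if $m \geq N$ then $a_0 \bmod \mathfrak{p}$ witnesses $\mathfrak{p} \in \mathcal{Q}_m \subseteq \mathcal{Q}_N$, while if $m < N$ then (assuming $f^m(a_0) \neq t_0$) $\mathfrak{p}$ lies in the finite set of prime divisors of the nonzero element $f^m(a_0) - t_0$. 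Hence
$$\mathcal{P}(f, t_0) \subseteq \mathcal{Q}_N \cup E_N$$
with $E_N$ a finite set, of Dirichlet density zero.

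Combining the two steps yields $\delta(\mathcal{P}(f,t_0)) \leq \delta(\mathcal{Q}_N)$ for every $N$. The sequence $\delta(\mathcal{Q}_N)$ is non-increasing in $N$ --- any lift to $A_{N+1}(t_0)$ of a fixed-point-free element of $A_N(t_0)$ is again fixed-point-free --- and its limit equals $\mathcal{F}(A_\infty(t_0))$ by the definition \eqref{arithF}, which finishes the proof. The main obstacle I anticipate is the degenerate case $f^m(a_0) = t_0$ for some $m$, in which $\mathcal{P}(f,t_0)$ contains every prime; one would then need to establish $\mathcal{F}(A_\infty(t_0)) = 1$ separately, presumably by propagating the $k$-rational root $a_0$ at level $m$ upward through the preimage tree via compatible specializations, though this configuration is typically excluded by the implicit standing hypotheses in arithmetic dynamical density results. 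The only other delicate point is the routine bookkeeping of the finitely many primes of $k$ ramified in each $K_N(t_0)$, which contribute only to $E_N$.
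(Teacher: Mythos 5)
Your proposal is essentially the proof of this result: the paper does not prove Theorem \ref{numfieldden} at all (it is quoted from \cite{quaddiv}), and the argument given there — and sketched in this paper's finite-field discussion — is exactly your reduction to ``$f^N(x)-t_0$ has a root mod $\mathfrak{p}$'' for each fixed $N$, Chebotarev at level $N$ identifying that density with the proportion of elements of $A_N(t_0)$ fixing a root, and then letting $N\to\infty$ against the definition \eqref{arithF}, with the finitely many primes coming from levels $m<N$, ramification, and the leading coefficient swept into a density-zero exceptional set. That part of your write-up is correct and complete.

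One correction on the ``main obstacle'' you flag: in the degenerate case $f^m(a_0)=t_0$ the statement genuinely must be excluded (as it effectively is in \cite{quaddiv}, where only nonzero terms of the sequence are relevant); it cannot be rescued by proving $\mathcal{F}(A_\infty(t_0))=1$ via propagating the rational root up the tree, because a rational root at level $m$ forces fixed points only at levels $\leq m$, not above. Concretely, take $k=\Q$, $f(x)=x^2+1$, $a_0=3$, $t_0=10$: then $f(a_0)=t_0$ and every $f^n(x)-10$ is separable (since $10$ is not in the critical orbit $1,2,5,26,\dots$), yet $f^2(x)-10=(x^2-2)(x^2+4)$ has splitting field $\Q(\sqrt{2},i)$ with group $(\Z/2)^2$ containing a fixed-point-free element, so $\mathcal{F}(A_\infty(10))\leq 3/4<1$ while, under the convention that every prime divides $0$, your $\mathcal{P}(f,t_0)$ would be all primes. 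So the right resolution of that case is exclusion by hypothesis/convention, not a separate argument; with that understanding your proof stands as the intended one.
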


Note that the conclusion is independent of the choice of $a_0$.  In the case of number fields as above, we may also replace $\delta$ with natural density, namely
\begin{equation*}
D(\mathcal{P}) = \lim_{x \to \infty}\frac{\#\{\p \in \mathcal{P} : N\p \leq x\}}{\#\{\p : N\p \leq x\}},
\end{equation*}
and part of the conclusion of the theorem is that this limit exists.  

Theorem \ref{numfieldden} says that $\mathcal{F}(A_\infty)$ gives the ``generic" value of the density of prime divisors of an orbit of $f$ translated by a constant $t_0$.  Indeed, if
$\mathcal{F}(A_\infty) = 0$, then one can use the Hilbert irreducibility theorem to show that for any $\epsilon > 0$, $\mathcal{P}(f, t_0) < \epsilon$ for all but a thin set of $t_0$.  In the case where $f$ is post-critically finite, we have the exact sequence \eqref{imges}, and in light of Theorem \ref{monodromy}, one needs to study the extension of constant fields $k'_\infty/k$ and understand how it interacts with $\img(f)$.  Indeed, if $k'_\infty$ is a finite extension of $k$, one could replace the ground field $k$ by $k'_\infty$ and obtain the desired result.  However, it seems unlikely that this is the case in most circumstances.  For instance, when $f(x) = x^2$ and $k = \Q$, we have that $k'_\infty = \Q_{\zeta_{2^\infty}}$.  


\subsection{Density problems over finite fields}

Let $\Fq$ be the finite field with $q$ elements, let $f \in \Fq[x]$, and let $\alpha \in \overline{\mathbb{F}}_q$.  Clearly the forward orbit $\{f^n(\alpha) : n \geq 1\}$ of any such $\alpha$ is contained in a finite extension of $\Fq$, whence it must be finite.  We thus have two fundamental behaviors: if there is a $j \geq 1$ with $f^j(\alpha) = \alpha$ we call $\alpha$ {\em purely periodic} under $f$, while if there is no such $j$ then we call $\alpha$ {\em pre-periodic} under $f$.  Let ${\rm Per}(f)$ be the purely periodic points.  Note that by construction $f$ must be post-critically finite, since all its orbits are finite.  Define the Dirichlet density of a set $S \subseteq \overline{\mathbb{F}}_q$ to be 
\begin{equation}  \label{dirichdef}
\delta(S) = \limsup_{s \rightarrow 1^{+}} \frac{\sum_{\alpha \in S} \; (\deg \alpha)^{-1} N(\alpha)^{-s}}
{\sum_{\alpha \in \overline{\mathbb{F}}_q} \; (\deg \alpha)^{-1} N(\alpha)^{-s}},
\end{equation}
where $\deg \alpha = [\Fq(\alpha) : \Fq]$, and $N(\alpha) = q^{\deg \alpha}$.  This is essentially identical to Definition \ref{dirichlet}; the $(\deg \alpha)$ term is necessary because there are $\deg \alpha$ conjugates of $\alpha$ corresponding to the prime of $\Fq[t]$ with root $\alpha$. 


We sketch an argument showing how $\delta({\rm Per}(f))$ is given by statistics of an arithmetic monodromy group as in \eqref{es}, where $k = \Fq$.  Note that $\alpha \in {\rm Per}(f)$ if and only if some branch of the tree of preimages $\bigcup_{n \geq 1} f^{-n}(\alpha)$ is contained in the base field $\F_q$.  Let $\p$ be the prime ideal generated by the minimal polynomial of $\alpha$ over $\F_q$.  Then a branch of $\bigcup_{n \geq 1} f^{-n}(\alpha)$ is contained in $\F_q$ if and only if $\Frob_\p \in A_n$ fixes a root of $f^n(x) - t$ for each $n \geq 1$ (denote by $\mathcal{P}$ the set of such $\p$).  Here $\Frob_\p$ is the conjugacy class of elements of $A_n$ that act on the residue class field $O_{K_n}/\p O_{K_n}$ as $x \mapsto x^q$.   The Chebotarev density theorem for function fields \cite[Theorem 9.13A]{rosen} then gives that the Dirichlet density of $\mathcal{P}$ is bounded above, for each $n \geq 1$, by the proportion of $g \in A_n$ that fix at least one root of $f^n(x) - t$.  Thus this density is bounded above by $\mathcal{F}(A_\infty)$.  It is then straightforward to show that this implies $\delta({\rm Per}(f)) \leq \mathcal{F}(A_\infty)$.

\section{Background and examples, part I} \label{background1} 

\subsection{Wreath recursion and spherically transitive elements}  In this section we give the background required to prove the resutls in Section \ref{fpprocess}. We draw on the exposition in \cite[Chapter 1]{nek}, including following the convention there of writing group actions on the left.  From now on we suppose that our alphabet $X$ is given by $\{0, \ldots, d-1\}$, and we let $S_d$ denote the symmetric group on $d$ letters.  Then there is a natural isomorphism 
$$\psi : \Aut(X^*) \to S_d \wr \Aut(X^*),$$
where $\wr$ denotes the wreath product, that takes $g$ to $(\sigma, (g|_0, \ldots, g|_{d-1}))$, where $\sigma \in S_d$ is the action of $g$ on $X$ (i.e., on the first level of $X^*$).  In other words, we may describe $g$ by specifying its restriction at each element of $X$ and its action on $X$.  We call this the {\em wreath recursion} describing $g$.  We generally drop the parentheses and equate $g$ with its image under $\psi$, writing 
\begin{equation} \label{wreathrec}
g = \sigma(g|_0, \ldots, g|_{d-1}).
\end{equation}  
We write the identity element as $1$, and when the permutation $\sigma$ is the identity, we omit it.  Hence the identity element of $\Aut(X^*)$ is given in wreath recursion by $(1,1, \ldots, 1)$.  Note that the element $a = (a, 1, 1, \ldots, 1)$ is also the identity, since by induction it acts trivially on $X^n$ for all $n$, and thus acts trivially on $X^*$.  Given $g = \sigma(g|_0, \ldots, g|_{d-1})$, we can make explicit its action on any $X^n$ thanks to the following formulas, which are straightforward to prove: 
\begin{equation} \label{lifts}
g|_{vw} = g|_v|_w \qquad g(vw) = g(v)g|_v(w),
\end{equation}
for any $v,w \in X^*$.  

One can multiply elements in wreath recursion form using the normal multiplication in a semi-direct product:
\begin{equation} \label{prod}
\sigma(g_0 \ldots,  g_{d-1}) \cdot \tau(h_0 \ldots,  h_{d-1}) = \sigma \tau(g_{\tau(0)}h_0 \ldots,  g_{\tau(d-1)}h_{d-1}),
\end{equation}
where $g_i = g|_i$ and $h_i = h|_i$.  
If we take $v \in X^*$ of length $n$, we may consider \eqref{prod} as giving the wreath recursion of $g, h \in \Aut(X^*)$ acting on $X^n$.  This gives 
\begin{equation} \label{singleelt}
(gh)(v) = g(h(v)) \qquad \text{and} \qquad (gh)|_v = g|_{h(v)} \cdot h|_v
\end{equation}

\begin{example} \label{counter1}
Let $d = 2$ and take $\sigma$ to be the non-trivial element of $S_2$.  Let $a = (a, b)$, $b = \sigma(1,1)$ and $G = \langle a, b \rangle$.  From \eqref{prod}, we have $b^2 = \sigma^2(1,1) = 1$ and $a^2 = (a^2, b^2) = (a^2, 1)$.  By induction this gives $a^2 = 1$.  However, the element $ba = \sigma(b,a)$ is spherically transitive, i.e., acts on each $X^n$ as a $2^n$-cycle, and in particular has infinite order.   This is a consequence of Proposition \ref{sphertrans}.  In Section \ref{background2} we show that $G$ is isomorphic to the iterated monodromy group of the Chebyshev polynomial $z^2 - 2$.  
\end{example}

\begin{example} \label{basilica1}
Let $d = 2$ and take $\sigma$ to be the non-trivial element of $S_2$.  Let $a = \sigma(1, b)$, $b = (1,a)$ and $G = \langle a, b \rangle$.  
This is the Basilica group, mentioned on page \pageref{basilica}.   If we write $X^2 = \{00, 01, 10, 11\}$, then from \eqref{lifts}, the wreath recursion for $a$ acting on $X^2$ is $\tau(1, 1, 1, a),$
where $\tau = (00,10)(01,11).$  Hence from \eqref{prod}, $a^2$ acts on $X^2$ as $(1, a, 1, a)$.  It follows that the restrictions of $a^n$ to words of length 2 are all of the form $a^k$ for $k < n$.  If $a$ is torsion of order $n$, then all restrictions of $a^n$ are trivial, and so $a^k = 1$ for some $k < n$, a contradiction.  Hence $a$ has infinite order, though it is not spherically transitive.
\end{example}

As an illustration of the preceding ideas, we give a characterization of spherically transitive elements of $\Aut(X^*)$. The proof is left as an exercise. 
\begin{proposition} \label{sphertrans}
Let $X$ have $d$ elements and $g \in \Aut(X^*)$.  
For each $v \in X^{n-1}$, let $\gamma_v \in S_d$ denote the action of $g|_v$ on $X$, 
and let $\rho_n = \prod_{v \in X^{n-1}} \gamma_v$.  Then $g$ is spherically transitive if and only if $\rho_n$ is a $d$-cycle for every $n \geq 1$.
\end{proposition}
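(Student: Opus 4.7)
The plan is to induct on $n$. The base case $n=1$ is immediate: $\rho_1 = \gamma_\emptyset$ equals the top-level permutation $\sigma$ of $g$, so it is a $d$-cycle if and only if $g$ is transitive on $X^1$.

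For the inductive step, the main tool is a telescoping formula for restrictions of iterates. Iterating the second identity in \eqref{singleelt} by induction on $k$ yields
\[
(g^k)|_v \;=\; g|_{g^{k-1}(v)} \cdot g|_{g^{k-2}(v)} \cdots g|_{g(v)} \cdot g|_v.
\]
Assume inductively that $g$ is spherically transitive through level $n-1$, so $X^{n-1}$ is a single $g$-orbit of length $d^{n-1}$ containing some fixed $v_0$. Applying the telescoping formula with $k = d^{n-1}$ and $v = v_0$ shows that $(g^{d^{n-1}})|_{v_0}$ is the product of all $g|_v$, $v \in X^{n-1}$, each occurring exactly once, in the cyclic order prescribed by the orbit. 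Its action on $X$ is therefore the product of all $\gamma_v$ in that order, which I identify with $\rho_n$; any different choice of starting vertex $v_0$ yields a cyclic rotation of the factors, and since cyclic rotations are conjugate in $S_d$, the cycle type is independent of the choice, and the property ``$d$-cycle'' is well-defined.

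It remains to prove that $g$ acts transitively on $X^n$ if and only if $(g^{d^{n-1}})|_{v_0}$ is a $d$-cycle on $X$. The truncation map $X^n \to X^{n-1}$ is $g$-equivariant, so every $g$-orbit on $X^n$ projects onto the full orbit of size $d^{n-1}$ downstairs, and hence has length a multiple of $d^{n-1}$ and at most $d^n$. The element $g^{d^{n-1}}$ fixes every vertex of $X^{n-1}$ and acts on the fiber over $v_0$ by $(g^{d^{n-1}})|_{v_0}$. If this fiber action is a $d$-cycle, the $g$-orbit of $v_0 x_0$ has length at least $d \cdot d^{n-1}$, hence equals $d^n$, so $g$ is transitive; conversely, a $d^n$-cycle under $g$ decomposes under $g^{d^{n-1}}$ into $d^{n-1}$ fiberwise $d$-cycles, forcing $(g^{d^{n-1}})|_{v_0}$ to be a $d$-cycle. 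The main subtlety is the ordering ambiguity in the defining product for $\rho_n$; this is handled by the orbit-ordering interpretation above, together with the cyclic-rotation observation, so no order-dependence issues obstruct the induction.
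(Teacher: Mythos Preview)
The paper leaves the proof of this proposition as an exercise, so there is no proof in the paper to compare against. Your argument is correct and is the standard one: reduce to the fiber action of $g^{d^{n-1}}$ over a base vertex using the telescoping identity for $(g^k)|_v$, and then use the elementary fact that a $d^n$-cycle raised to the $d^{n-1}$ power splits into $d^{n-1}$ disjoint $d$-cycles, one per fiber.

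One remark on the ordering issue you flag. You resolve it by interpreting $\rho_n$ as the product in the orbit order (well-defined up to cyclic rotation, hence up to conjugacy). That is the natural reading, and it is the only reading under which the proposition is literally true for $d\ge 3$: for an arbitrary ordering the cycle type of $\prod_v \gamma_v$ is \emph{not} determined by the multiset $\{\gamma_v\}$ alone (for example, in $S_4$ one has $(1234)(12)(13)=(14)$ but $(1234)(13)(12)=(1324)$), and since any tuple $(\gamma_v)$ arises from some $g$ acting trivially on $X^{n-1}$, the unordered statement fails in general. The paper's own applications are either in the abelian case $d=2$ (see the Remark following the proposition) or go through Lemma~\ref{trans}, so this looseness in the statement causes no trouble downstream; your orbit-order interpretation is exactly the right fix.
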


\begin{remark}
Note that by convention $X^0 = \emptyset$, and $\gamma_\emptyset = \rho_1$ is the action of $g$ on $X$.   In the case $d = 2$, $\rho_n$ is the identity precisely when the number of $v \in X^{n-1}$ with $\gamma_v \neq 1$ is even.  Thus the Lemma says that $g$ is spherically transitive when $\gamma_v \neq 1$ for an odd number of $v \in X^{n-1}$, for all $n \geq 1$.  For the element $ba = \sigma(b,a)$ in Example \ref{counter1}, it is easy to see that $\gamma_v \neq 1$ for only one $v$ in each $X^{n-1}$.  
\end{remark}

%
%

\subsection{Self-similar and contracting groups}
A group $G \leq \Aut(X^*)$ is {\em self-similar} if $g|_x \in G$ for all $g \in G$ and $x \in X$.  We call $G$ {\em contracting} if there is a finite set $\mathcal{N} \subset G$ such that for every $g \in G$, $g|_v \in \mathcal{N}$ for all $v \in X^*$ sufficiently long.  The smallest set satisfying this condition is called the {\em nucleus} of the group.  In contracting groups, one can reduce many computations in $G$ to considerations involving only a finite set.  For instance, as pointed out in \cite{bartnek}, solving the so-called word problem (determining whether a given product of $n$ generators is trivial) can be done in polynomial time in a contracting group.  

We now consider the set of \textit{stable} elements of $G$,
$$\mathcal{N}_0 = \{g \in G : \text{$g|_v = g$ for some non-empty $v \in X^*$}\}.$$  
\begin{proposition} \label{contracting}
If $G \leq \Aut(X^*)$ is contracting, then $\mathcal{N}_0$ is finite and the nucleus of $G$ is equal to
\begin{equation} \label{theset}
\{h \in G : \text{$h = g|_w$ for some $g \in \mathcal{N}_0, w \in X^*$}\}.
\end{equation}
\end{proposition}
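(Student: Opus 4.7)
The plan is to split the proposition into two assertions: first that $\mathcal{N}_0$ is finite, and then that the nucleus equals the set $S := \{h \in G : h = g|_w \text{ for some } g \in \mathcal{N}_0 \text{ and } w \in X^*\}$. Both parts exploit the same two ingredients. First, in a contracting group, every element $g$ has a finite restriction set $R(g) := \{g|_v : v \in X^*\}$, which follows because all but finitely many restrictions lie in the nucleus. Second, $\mathcal{N}$ is by definition the \emph{smallest} finite set containing all sufficiently deep restrictions.

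For finiteness of $\mathcal{N}_0$, I will show $\mathcal{N}_0 \subseteq \mathcal{N}$. If $g \in \mathcal{N}_0$ with $g|_v = g$ for some non-empty $v$, iterating the identity $g|_{vw} = (g|_v)|_w$ from \eqref{lifts} gives $g|_{v^n} = g$ for all $n \geq 1$. Hence $g$ appears as a restriction of itself at arbitrarily deep levels, and were $g$ missing from $\mathcal{N}$ the contracting property would fail. The inclusion $S \subseteq \mathcal{N}$ follows by the same iteration: if $h = g|_w$ with $g \in \mathcal{N}_0$ satisfying $g|_v = g$, then $g|_{v^n w} = h$ for every $n$, so $h$ too appears at unbounded depth and must belong to $\mathcal{N}$.

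The reverse inclusion $\mathcal{N} \subseteq S$ is the main step and is where I expect the only real subtlety. Given $h \in \mathcal{N}$, minimality of the nucleus provides some $g \in G$ for which $g|_v = h$ at arbitrarily long $v$: indeed, if every $g$ had $\{v : g|_v = h\}$ bounded in depth, then $\mathcal{N} \setminus \{h\}$ would still witness the contracting property, contradicting minimality. Choose such a $v$ with $|v| > |R(g)|$. Along the prefixes of $v$, the restrictions $g|_{v^{(0)}}, g|_{v^{(1)}}, \ldots, g|_{v^{(|v|)}} = h$ all lie in the finite set $R(g)$, so by pigeonhole there exist $0 \leq i < j \leq |v|$ with $g|_{v^{(i)}} = g|_{v^{(j)}}$. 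Letting $u$ denote the factor of $v$ from position $i{+}1$ through $j$ and $w$ the suffix after position $j$, the element $g^\ast := g|_{v^{(i)}}$ satisfies $g^\ast|_u = g|_{v^{(j)}} = g^\ast$ with $u$ non-empty; thus $g^\ast \in \mathcal{N}_0$, while $h = g|_v = g^\ast|_w$ exhibits $h \in S$.

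The main obstacle is the minimality argument that produces a single $g$ witnessing $h$ as a restriction at unbounded depth, since the nucleus condition is stated uniformly across all of $G$ and one needs to carefully contradict minimality of $\mathcal{N}$ rather than merely invoke contracting. Once that step is in hand, the remainder is pigeonhole on the finite orbit $R(g)$ together with the wreath-recursion identities \eqref{lifts}.
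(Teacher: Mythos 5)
Your proof is correct and follows essentially the same route as the paper: the inclusion of the set \eqref{theset} into the nucleus via the periodicity $g|_{v^n} = g$ (giving finiteness of $\mathcal{N}_0$), and the reverse inclusion by producing an element whose restriction equals $h$ at arbitrarily long words and then applying the pigeonhole principle to its prefix restrictions to find an element of $\mathcal{N}_0$ restricting to $h$. The only differences are cosmetic: you justify the ``appears at unbounded depth'' characterization of nucleus elements by minimality explicitly, and you pigeonhole over the finite set $R(g)$ rather than over nucleus elements among sufficiently deep prefixes as the paper does.
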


\begin{proof}
By definition, the nucleus of $G$ consists of the elements of $g$ for which there exists $r \in G$ with $r|_w = g$ for arbitrarily long words $w$.  If $g|_v = g$ for some non-empty $v$ and $v^n$ is the $n$-fold concatenation of $v$ with itself, then from \eqref{lifts} we have $g|_{v^n} = g$ for all $n \geq 1$.  Moreover, any $h$ with $g|_w = h$ for some $w \in X^*$ must also occur as the restriction of $g$ at arbitrarily long words.  Hence the set in \eqref{theset} is contained in the nucleus, and in particular $\mathcal{N}_0$ is finite.  On the other hand, if $h$ is in the nucleus, let $r \in G$ with $r|_w = h$ for arbitrarily long words $w$.  Let $n_1$ be the size of the nucleus and $n_2$ be such that $r|_u$ is in the nucleus when $u$ has length at least $n_2$.  We may take the length of $w$ to exceed $n_1 + n_2$.  Hence if $w_k$ is the length-$k$ initial word of $w$, then $r|_{w_k}$ is in the nucleus for more than $n_1$ values of $k$, and hence $r|_{w_k} = r|_{w_j}$ for some $k < j$.  Therefore $r|_{w_k} \in \mathcal{N}_0$ and there is a word $w'$ with $r|_{w_k}|_{w'} = r|_w = h$.   
\end{proof}

It is known that standard actions on $X^*$ of iterated monodromy groups of post-critically finite polynomials are always contracting \cite[Theorem 6.4.4]{nek}, and Proposition \ref{kneadstab} gives a method for computing $\mathcal{N}_0$ for a class of groups including iterated monodromy groups.  
For the group from Example \ref{counter1}, we have that $\mathcal{N}_0 = \{1, a\}$, and hence $G$ has nucleus $\{1, a, b\}$.  For the Basilica group (Example \ref{basilica1}), we have $\mathcal{N}_0 = \{1, a, b, a^{-1}, b^{-1}, ba^{-1}, ab^{-1}\}$ (see the remark following Proposition \ref{kneadstab}), and in this case $\mathcal{N}_0$ coincides with the nucleus.

\section{The fixed-point process} \label{fpprocess}

As noted in the introduction, the profinite completion $G_\infty$ of $G$ with respect to the $G_n$ comes equipped with a natural probability measure that projects to the discrete measure on each $G_n$.   In this section we define a stochastic process -- that is, an infinite collection of random variables defined on a common probability space -- that encodes information about the number of fixed points in $X^n$ of elements of $G_n$.  We then adapt techniques of \cite{galmart} to show that this process is a martingale provided that $G$ contains a spherically transitive element.  Finally, we apply a martingale convergence theorem that leads to the proofs of Theorem \ref{gen}, Theorem \ref{crystal}, and Corollary \ref{torfree}.    

Given $g \in G$ where the group $G$ acts naturally on a set $S$, we denote by $\Fix(g)$ the number of elements of $s$ with $g(s) = s$.   Define a stochastic process $Y_1, Y_2, \ldots$ on $G_\infty$ by taking $Y_i(g) = \#\Fix(\pi_i(g))$, where $\pi_i$ is the natural projection $G_\infty \to G_n$ and $G_n$ acts on $X^n$.  We call this the {\em fixed point process} of $G$, and write it $FP(G)$.  Because $\mu(\pi_i^{-1}(T)) = \#T/\#G_i$ for any $T \subseteq G_i$, we have that $\mu(Y_1 = t_1, \ldots, Y_n = t_n)$ is given by 
\begin{equation} \label{fpchar}
\frac{1}{\# G_n}\# \left\{g \in G_n : \mbox{$g$ fixes $t_i$ elements of $X^i$ for 
$i = 1,2, \ldots, n$} \right\}.
\end{equation}
We denote by $E(Y)$ the expected value of the random variable $Y$.

\begin{definition}
A stochastic process with probability measure $\mu$ and random variables $Y_1, Y_2, \ldots$ taking values in $\mathbb{R}$ is a {\em martingale} if for all $n \geq 2$ and any $t_i \in \mathbb{R}$, 
$$E(Y_n \mid Y_{1} = t_{1}, Y_2 = t_2, \ldots, Y_{n-1} = t_{n-1}) = t_{n-1},$$
provided $\mu(Y_{1} = t_{1}, Y_2 = t_2, \ldots, Y_{n-1} = t_{n-1}) > 0$.  
\end{definition}

\begin{theorem} \label{mart}
Let $G \leq \Aut(X^*)$ have a spherically transitive element.  Then $FP(G)$ is a martingale.  
\end{theorem}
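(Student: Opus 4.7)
The plan is to verify the martingale identity by reducing it, via the natural fiber structure of the restriction $\pi:G_n \to G_{n-1}$, to a local computation at each fixed point of a given $g \in G_{n-1}$.  The conditioning event pulls back from some subset $S \subseteq G_{n-1}$ (the elements of $G_{n-1}$ realizing the statistics $(t_1,\ldots,t_{n-1})$), and because $\mu$ pushes forward to the uniform measure on each $G_i$, conditioning on this event is equivalent to choosing $g \in S$ uniformly and then choosing a lift $h$ uniformly in $L(g):=\pi^{-1}(g)$.  It therefore suffices to prove the fiberwise identity
$$E\bigl(\#\Fix(h)\,\big|\,h \in L(g)\bigr) \;=\; \#\Fix(g) \;=\; t_{n-1}$$
for each $g \in S$; averaging over $g$ then yields the martingale property.

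Next, using the formula $h(vx) = g(v)\cdot h|_v(x)$ from \eqref{lifts}, a word $vx\in X^n$ is fixed by $h$ precisely when $v \in \Fix(g)$ and $x$ is fixed by the $S_d$-action of $h|_v$ on $X$.  Hence
$$\#\Fix(h) \;=\; \sum_{v\in \Fix(g)} \#\Fix_X(h|_v),$$
and by linearity the target reduces to showing $E(\#\Fix_X(h|_v)\mid h\in L(g))=1$ for every $v\in \Fix(g)$.  Fixing any reference lift $h_0 \in L(g)$, every other lift has the form $h_0 k$ with $k$ uniform over the kernel $K:=\ker \pi$; because $k$ acts trivially on $X^{n-1}$, the formula \eqref{singleelt} yields $(h_0k)|_v = h_0|_v \cdot k|_v$, so $h|_v$ is uniformly distributed over the coset $h_0|_v\cdot H_v$ inside $S_d$, where $H_v:=\{k|_v:k\in K\}\leq S_d$.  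A short orbit-stabilizer calculation shows that whenever $H\leq S_d$ is transitive and $a\in S_d$, the identity $\sum_{t\in aH}\#\Fix_X(t)=|H|$ holds (for each of the $d$ values of $x\in X$, the set $\{h\in H : h(x) = a^{-1}(x)\}$ is a coset of $\Stab_H(x)$ of size $|H|/d$).  Consequently the desired conditional expectation equals $1$ as soon as $H_v$ is transitive.

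To establish this transitivity I would invoke the spherically transitive element $s\in G$ supplied by hypothesis.  Its image in $G_n$ is a single $d^n$-cycle on $X^n$, so $s^{d^{n-1}}$ in $G_n$ is a product of $d^{n-1}$ disjoint $d$-cycles.  This element acts trivially on $X^{n-1}$ (hence lies in $K$) while permuting each fiber $vX$ of $X^n\to X^{n-1}$ inside itself, which forces each of its $d$-cycles to exhaust a single fiber.  Consequently $s^{d^{n-1}}|_v$ acts as a $d$-cycle on $X$ for every $v\in X^{n-1}$, so $H_v$ contains a $d$-cycle and is transitive, completing the argument.

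The step I expect to be the main obstacle is obtaining transitivity of $H_v$ at \emph{each} individual $v$, as opposed to only a product-wise transitivity of the image of $K$ inside $S_d^{X^{n-1}}$; a weaker hypothesis on $G$ could well yield the latter without the former.  What saves the day is that one global element $s^{d^{n-1}}$ simultaneously certifies transitivity at every $v$: cycle-structure considerations for the $d^{n-1}$-st power of a $d^n$-cycle force a decomposition into exactly one $d$-cycle per fiber, so this single element manufactures the required transitive $d$-cycle in every $H_v$ at once.
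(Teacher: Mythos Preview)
Your argument is correct and rests on the same key observation as the paper: the element $\tau = s^{d^{n-1}}$ lies in the kernel of $G_n \to G_{n-1}$ and acts as a $d$-cycle on every fiber $vX$.  The packaging differs slightly.  The paper works entirely inside $G_n$, partitions the conditioning set $S \subseteq G_n$ into cosets of the cyclic group $\langle \tau \rangle$ of order $d$, and for each coset $g\langle\tau\rangle$ counts directly that $\sum_{i=0}^{d-1}\#\Fix(g\tau^i)=d\,t_{n-1}$: since $\langle\tau\rangle$ is \emph{simply} transitive on each $vX$, for every $vx$ lying over a fixed $v$ exactly one power $\tau^i$ carries $g^{-1}(vx)$ back to $vx$.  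You instead average over the full fiber $L(g)=h_0K$, pass to the image subgroup $H_v\leq S_d$, and invoke an orbit--stabilizer identity $\sum_{t\in aH}\#\Fix_X(t)=|H|$ valid for any transitive $H$.  Your route is a touch more abstract and would work verbatim under the weaker hypothesis that some element of $K$ restricts to a transitive permutation at each $v$; the paper's coset-of-$\langle\tau\rangle$ argument is marginally more elementary since simple transitivity obviates any appeal to orbit counting.  Either way the substance is the same.
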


\begin{proof}
We must show that 
\begin{equation} \label{sloop}
E(Y_{n} \mid Y_1 = t_1, \ldots, Y_{n-1} = t_{n-1}) = t_{n-1},
\end{equation}
where $t_1, \ldots, t_{n-1}$ satisfy $\mu (Y_1 = t_1, \ldots, Y_{n-1} = t_{n-1}) > 0.$  Because the $Y_i$ take integer values, each $t_i$ must be an integer.  
By definition, the left-hand side of \eqref{sloop} is
\begin{equation} \label{sloop1}
\sum_k k \cdot \frac{\mu (Y_1 = t_1, \ldots, Y_{n-1} = t_{n-1}, Y_n = k )}
{\mu (Y_1 = t_1, \ldots, Y_{n-1} = t_{n-1})}.
\end{equation}
Put 
\begin{eqnarray*}
S & = & \{g \in G_n : \text{$g$ fixes $t_i$ elements of $X^i$ for $1 \leq i \leq n-1$}\} \\
S_k & = & \{g \in S : \text{$g$ fixes $k$ elements of $X^n$}\} 
\end{eqnarray*}
By \eqref{fpchar}, the expression in \eqref{sloop1} is equal to
$\sum_k k \cdot (\# S_k/\# S)$.
This in turn may be rewritten
\begin{equation} \label{sloop4}
\frac{1}{\#S} \sum_{g \in S} \#\Fix(g).
\end{equation}

Let $\sigma \in G_n$ be the image under $\pi_n$ 
of the spherically transitive element of $G_\infty$ assumed to exist.  Then $\tau = \sigma^{d^{n-1}}$ acts trivially on $X^{n-1}$, and hence $S$ is invariant under multiplication by powers of $\tau$, and therefore is a disjoint union of cosets of $\langle \tau \rangle$.  Note that because $\langle \sigma \rangle$ acts transitively on $X^n$, $\langle \tau \rangle$ must act transitively on each set $v* = \{vx : x \in X\}$ for $v \in X^{n-1}$.    

Now take $g\langle \tau \rangle \subseteq S$, and let $R = \{vx : v \in X^{n-1}, g(v) = v, x \in X\}$ be the set of elements of $X^n$ lying above elements of $X^{n-1}$ fixed by $g$.  Note that because $g \in S$, we have $\#R = dt_{n-1}$.  If $vx \in R$, then $g(vy) = vx$ for some unique $y \in X$.  There is a unique $i \in \{0, \ldots, d-1\}$ such that $\tau^i(vx) = vy$, and thus $g\tau^i(vx) = vx$.  If $I(g,s)$ is the function that takes the value $1$ when $g(s) = s$ and $0$ otherwise, we have shown that $\sum_{i=0}^{d-1} I(g\tau^i, vx) = 1$ and hence
\begin{equation*}
\sum_{vx \in R} \sum_{i=0}^{d-1} I(g\tau^i, vx) = dt_{n-1}.
\end{equation*}
Inverting the order of summation and using that $g(w) \neq w$ for $w \not\in R$, we have
$$
\sum_{i = 0}^{d-1}\# \Fix(g\tau^i) = dt_{n-1}.
$$
But $S$ is the disjoint union of cosets of $\langle \tau \rangle$, and hence
$$
\sum_{g \in S} \# \Fix(g) = \#S \cdot t_{n-1}.
$$
Therefore the expression in \eqref{sloop4} equals $t_{n-1}$.
\end{proof}

Martingales are useful tools because they often converge in the following sense:
\begin{definition}
Let $Y_1, Y_2, \ldots$ be a stochastic process defined on the probability space $\Omega$ with probability measure $\mu$.  
The process {\em converges} if 
$$\mu \left(\omega \in \Omega : \text{$\inflim{n} Y_n(\omega)$ exists} \right) = 1.$$
\end{definition}
We give one standard martingale convergence theorem (see e.g. \cite[Section 12.3]{grimmett} for a proof).
\begin{theorem} \label{martconv}
Let $M = (Y_1, Y_2, \ldots)$ be a martingale whose random variables take nonnegative real values.  Then $M$ converges.
\end{theorem}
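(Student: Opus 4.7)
The plan is to use Doob's upcrossing argument, the classical route to the martingale convergence theorem. For rational numbers $a < b$, define $U_n([a,b])(\omega)$ to be the number of completed upcrossings of the interval $[a,b]$ by the finite sequence $Y_1(\omega), \ldots, Y_n(\omega)$, that is, the number of completed transits from a value below $a$ to a subsequent value above $b$. I would show that each limit $U_\infty([a,b]) := \lim_{n \to \infty} U_n([a,b])$ is almost surely finite. Once this is established, a countable-union argument over pairs of rationals forces the pointwise limit of $(Y_n)$ to exist almost surely.

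The heart of the argument is Doob's upcrossing inequality,
$$(b-a)\, E(U_n([a,b])) \leq E((Y_n - a)^+).$$
To prove this, I would introduce a predictable process $H_k$, depending only on $Y_1, \ldots, Y_{k-1}$, which equals $1$ when an upcrossing is currently in progress (the path has dropped below $a$ since its last visit above $b$) and $0$ otherwise. The martingale transform $Z_n := \sum_{k=2}^n H_k (Y_k - Y_{k-1})$ is again a martingale with mean zero, since the martingale property of $(Y_k)$ yields $E(H_k(Y_k - Y_{k-1})) = 0$ at each step after conditioning on the first $k-1$ variables. Each completed upcrossing contributes at least $b-a$ to $Z_n$, while any upcrossing in progress at time $n$ contributes at least $-(Y_n - a)^-$; rearranging $E(Z_n) = 0$ yields the inequality. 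Since $Y_n \geq 0$ and the martingale property forces $E(Y_n) = E(Y_1)$ for every $n$, we obtain $E((Y_n - a)^+) \leq E(Y_n) + a = E(Y_1) + a$, a bound independent of $n$. Monotone convergence then gives $E(U_\infty([a,b])) < \infty$, so $U_\infty([a,b]) < \infty$ almost surely.

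Taking a union over the countably many pairs of rationals $a < b$, the set on which $\liminf_n Y_n < \limsup_n Y_n$ is contained in the event that some $U_\infty([a,b])$ equals $\infty$, hence has measure zero; so $\liminf_n Y_n = \limsup_n Y_n$ almost surely. Fatou's lemma gives $E(\liminf_n Y_n) \leq \liminf_n E(Y_n) = E(Y_1) < \infty$, so $\liminf_n Y_n$ is almost surely finite, and therefore $\lim_n Y_n$ exists in $[0,\infty)$ almost surely. The main obstacle is the upcrossing inequality itself: setting up the predictable process $H_k$ correctly and verifying that each completed upcrossing contributes the full $b-a$ to the transform requires careful bookkeeping, and this is precisely the place where the martingale property (and not just $L^1$-boundedness) is essential, via the vanishing of $E(H_k(Y_k - Y_{k-1}))$ for each $k$.
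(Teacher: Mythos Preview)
The paper does not actually prove this theorem; it is quoted as a standard result with a reference to \cite[Section 12.3]{grimmett}. Your proposal sketches exactly the classical Doob upcrossing proof that one finds in that reference, so there is no divergence in approach to compare.

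One small slip worth fixing: the displayed inequality you state has $E((Y_n-a)^+)$ on the right, but the derivation you outline (bounding the contribution of an incomplete upcrossing below by $-(Y_n-a)^-$ and using $E(Z_n)=0$) actually yields $(b-a)\,E(U_n)\le E((Y_n-a)^-)$. For nonnegative $Y_n$ and $a\ge 0$ one has $(Y_n-a)^-\le a$, so this already gives an $n$-independent bound and the rest of your argument goes through unchanged. The $(Y_n-a)^+$ form is the usual submartingale version of the inequality and also works, but it is not what your predictable-process argument produces directly; you should make the stated inequality and its derivation agree. A second cosmetic point: in bounding $E((Y_n-a)^+)$ you write $E(Y_n)+a$; this should be $E(Y_n)+|a|$, or simply restrict to $a\ge 0$, which is all that is needed since $Y_n\ge 0$.
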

Since the random variables in $FP(G)$ take nonnegative integer values, we immediately have the following:  
\begin{corollary} \label{evconstcor}
Let $G \leq \Aut(X^*)$ contain a spherically transitive element.  Then 
$$\mu(\{g \in G_\infty : \text{$Y_1(g), Y_2(g), \ldots$ is eventually constant}\}) = 1.$$
\end{corollary}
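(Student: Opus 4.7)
The plan is to deduce this corollary as an essentially immediate consequence of the two results just quoted, with only one elementary observation about integer-valued sequences bridging the gap.

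First I would invoke Theorem \ref{mart}: since $G$ contains a spherically transitive element by hypothesis, the fixed point process $FP(G) = (Y_1, Y_2, \ldots)$ is a martingale on $(G_\infty, \mu)$. The random variables $Y_n$ are defined by $Y_n(g) = \#\Fix(\pi_n(g))$, so they take values in the nonnegative integers, and in particular in the nonnegative reals. Theorem \ref{martconv} therefore applies and yields
$$
\mu\left(\left\{g \in G_\infty : \lim_{n \to \infty} Y_n(g) \text{ exists in } \mathbb{R}\right\}\right) = 1.
$$

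The final step is the bridging observation: any convergent sequence of nonnegative integers must be eventually constant, because $\mathbb{Z}$ is a discrete subset of $\mathbb{R}$ and a Cauchy sequence in $\mathbb{Z}$ must eventually lie within distance less than $1$ of its limit, forcing its terms to agree. Consequently, on the full-measure set where $\lim_n Y_n(g)$ exists, the sequence $Y_1(g), Y_2(g), \ldots$ is in fact eventually constant. This gives
$$
\mu(\{g \in G_\infty : Y_1(g), Y_2(g), \ldots \text{ is eventually constant}\}) = 1,
$$
as claimed.

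There is no serious obstacle here; the only mild thing worth checking is that the event appearing in the corollary is measurable, which follows from writing it as the countable union over $N \geq 1$ and $k \geq 0$ of the cylinder-type sets $\{g : Y_n(g) = k \text{ for all } n \geq N\} = \bigcap_{n \geq N} \pi_n^{-1}(\{h \in G_n : \#\Fix(h) = k\})$. Each of these is measurable, so the corollary is a legitimate statement about $\mu$-measure, and the deduction above completes the proof.
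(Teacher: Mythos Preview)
Your proof is correct and follows exactly the same approach as the paper: apply Theorem \ref{mart} to get a martingale, invoke Theorem \ref{martconv} for almost-sure convergence, and then use the discreteness of the integers to upgrade convergence to eventual constancy. The paper states this in one sentence (``Since the random variables in $FP(G)$ take nonnegative integer values, we immediately have the following''), while you spell out the bridging observation and the measurability check, but the argument is the same.
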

In particular, any $g \in G_\infty$ fixing infinitely many ends of $X^*$ must have $Y_i(g) \to \infty$, and hence lie in a set of measure zero.  This proves Theorem \ref{gen}.  

We may now give a short proof of Theorem \ref{crystal}.  Assume the hypotheses of that theorem, and let $\mathcal{N}$ be the nucleus of $G$.  Suppose that $g \in G_{\infty}$ fixes some end $w = x_1x_2\cdots$ of $X^*$.  Let $v_n = x_1x_2 \cdots x_n$ for each $n \geq 1$, and consider the sequence of restrictions $g|_{v_1}, g|_{v_2}, \ldots$.  For $n$ large enough, we have $g|_{v_n} \in \mathcal{N}$, and $g|_{v_n}$ fixes the end $x_{n+1}x_{n+2} \cdots$ since $g$ fixes $w$.  Because $\mathcal{N}$ is finite, there must be $i < j$ with $g|_{v_i} = g|_{v_j}$.  Let $h = g|_{v_i}$, and note that for $w = x_{i+1}x_{i+2} \cdots x_j$ we have $h(w) = w$ and $h|_w = h$.  Hence $h \in \mathcal{N}_1$, and by hypothesis fixes infinitely many ends of $X^*$.   Inserting $v_i$ on the beginning of each of these ends, we obtain infinitely many ends of $X^*$ fixed by $g$.  Hence by Corollary \ref{evconstcor}, $g$ lies in a set of measure zero, proving the theorem.  

To derive Corollary \ref{torfree}, note that if $g \in \mathcal{N}_1$, then $g(v) = v$ and $g|_v = g$ for some non-empty $v \in X^*$.  From \eqref{singleelt} it follows that $g^n(v) = v$ and $g^n|_v = g^n$ for all $n \geq 1$, and hence $g^n \in \mathcal{N}_1 \subseteq \mathcal{N}_0$ for all $n \geq 1$.  Because $G$ is contracting, $\mathcal{N}_0$ is finite by Proposition \ref{contracting}, and thus two distinct powers of $g$ are equal, implying that $g$ is torsion.  Therefore if $G$ is torsion-free then $\mathcal{N}_1$ is trivial, and Corollary \ref{torfree} follows from Theorem \ref{crystal}.

\section{Background and Examples, part II} \label{background2}

\subsection{Computations of iterated monodromy groups}  Recall from Section \ref{mono1} that if $f$ is a post-critically finite polynomial with post-critical set $P_f$, then $\img(f)$ acts naturally on the tree $T_\beta = \bigsqcup_{n \geq 1} f^{-n}(\beta)$ of preimages of any $\beta \in \C \setminus P_f$.  If $f$ has degree $d$, then we may take $X = \{0, 1, \ldots, d-1\}$, and choose a bijection $\Lambda : X \to f^{-1}(\beta)$.  This extends to an isomorphism $\Lambda : X^* \to T$ (\cite[Proposition 5.2.1]{nek}) that conjugates the action of $\img(f)$ to that of some $G \leq \Aut(X^*)$ on $X^*$.  We call this a {\em standard action} of $\img(f)$ on $X^*$, and it gives an explicit way to compute a recursive formula for elements of $\img(f)$ in the form of a wreath recursion \cite[Proposition 5.2.2]{nek} (see also \cite[Proposition 2.2]{bartnek}).   

The action of $\img(f)$ on $T$ is generated by the action of the generators of $\pi_1(\C \setminus P_f)$ on $T$.  For each $z_0 \in P_f$ there is a generator of $\pi_1(\C \setminus P_f)$, and under a standard action there is a corresponding $g_{z_0} \in \Aut(X^*)$.  

The next result follows from \cite[Theorem 6.8.3]{nek}.   For $f \in \C[z]$ and $y \in \C$, denote by $\ord_f(y)$ the order of vanishing of $f(z) - f(y)$.  Clearly $\ord_f(y) \geq 1$, with $\ord_f(y) > 1$ if and only if $y$ is a critical point of $f$.    

\begin{theorem} \label{compmon}
Let $f \in \C[z]$ be a post-critically finite polynomial, with post-critical set $P_f$.  Let $G \leq \Aut(X^*)$ be a standard action of $\img(f)$ on $X^*$, and for $z_0 \in P_f$ let $g = g_{z_0} \in G$ be the element corresponding to $z_0$.   

Then the action of $g$ on $X$ contains one $m$-cycle for each $c \in f^{-1}(z_0)$ with $\ord_f(c) = m$.  Let $x_1, x_2, \ldots, x_m$ be the cycle corresponding to $c$. 
If $c \not\in P_f$, then $g|_{x_i} = 1$ for each $i = 1, \ldots, m$.  If $c \in P_f$, then there is a unique $i \in \{1, \ldots, m\}$ such that $g|_{x_i}$ is the element of $G$ corresponding to $c$, and $g|_{x_i} = 1$ otherwise.   
\end{theorem}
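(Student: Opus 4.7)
My plan is to derive the wreath recursion for $g = g_{z_0}$ by a local monodromy analysis near each preimage of $z_0$, then to apply the standard lift-and-close-up recipe (\cite[Prop.~5.2.2]{nek}) to read off the restrictions. First I would pick a small open disk $D$ around $z_0$ whose closure meets $P_f$ only at $z_0$, and represent $\gamma_{z_0}$ by a loop of the form $\ell \cdot s \cdot \ell^{-1}$, where $\ell$ is a path in $\C \setminus P_f$ from $\beta$ to some $\beta' \in \partial D$ and $s$ is a small positively oriented loop in $D \setminus \{z_0\}$ around $z_0$ based at $\beta'$. After shrinking $D$ if needed, $f^{-1}(D)$ is a disjoint union of open disks $U_c$, one for each $c \in f^{-1}(z_0)$, on which $f$ is holomorphically conjugate to the model $w \mapsto w^m$ with $m = \ord_f(c)$. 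Consequently $f^{-1}(\beta') \cap U_c$ consists of $m$ points cyclically permuted by the $f$-lift of $s$.

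Next I would fix a bijection $\Lambda : X \to f^{-1}(\beta)$ and define each connecting path $\ell_x$ from $\beta$ to $\Lambda(x)$ to be the $f$-lift of $\ell$ into the sheet of $U_c$ containing $\Lambda(x)$, followed by a short arc inside $U_c$ from the lifted endpoint of $\ell$ to $\Lambda(x)$. The action of $g$ on $X$ is then forced: for each $c$, the $m$ elements $x$ with $\Lambda(x) \in U_c$ form a single $m$-cycle, and distinct preimages of $z_0$ contribute disjoint cycles, which proves the cycle-structure claim.

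For the restrictions, the wreath-recursion recipe represents each $g|_{x_i}$ by the loop at $\beta$ formed by concatenating $\ell_{x_i}$, the $f$-lift of $\gamma_{z_0}$ based at $\Lambda(x_i)$, and $\ell_{g(x_i)}^{-1}$ (in the order appropriate for the left-action convention used here). For a cycle $(x_1, \ldots, x_m)$ over a preimage $c$, this lift decomposes as a reverse $\ell$-lift, an $s$-lift, and an $\ell$-lift, all contained in a contractible neighborhood of $\ell \cup U_c$. For $m-1$ values of $i$, the $s$-lift is a short arc inside $U_c$ that does not enclose $c$; the resulting concatenation is null-homotopic, so $g|_{x_i} = 1$. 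For the remaining index, the $s$-lift closes up into a small loop winding once around $c$, and the resulting loop at $\beta$ is homotopic in $\C \setminus P_f$ to a small positively oriented loop based at $\beta$ encircling $c$ alone.

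Finally I would split into cases on whether $c \in P_f$. If $c \notin P_f$, then $U_c$ is disjoint from $P_f$, so the last loop contracts through $c$ and is also trivial, whence every $g|_{x_i}$ in the cycle is trivial. If $c \in P_f$, the loop is by construction a standard generator $g_c$ of $\img(f)$ associated to $c$, giving the unique non-trivial restriction. The main obstacle is pinning down the distinguished index $i$ in each cycle precisely enough that the non-trivial restriction equals $g_c$ itself rather than a conjugate of it; this is controlled by the fact that all the $\ell_x$ are lifts of the single chosen $\ell$, so the local model $w \mapsto w^m$ makes the identification unambiguous.
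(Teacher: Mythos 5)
Your overall strategy is the natural one, and it is essentially the argument that the paper itself does not spell out but delegates to Nekrashevych: the paper's ``proof'' of Theorem \ref{compmon} is a citation of \cite[Theorem 6.8.3]{nek}, whose proof is exactly a monodromy computation of the kind you sketch. Your local analysis is fine in outline: for small $D$ the component $U_c$ of $f^{-1}(D)$ over $c$ is conjugate to $w \mapsto w^m$ with $m = \ord_f(c)$, which gives one $m$-cycle per preimage $c$, and within each cycle all but one of the restriction loops closes up without surrounding $c$, while the remaining one is a small loop around $c$, hence trivial when $c \notin P_f$. However, as written there are two problems. First, your construction of the connecting paths does not parse: a lift of $\ell$ starts at a point of $f^{-1}(\beta)$ and ends at a point of $f^{-1}(\beta') \subset \bigcup_c U_c$, whereas $\Lambda(x) \in f^{-1}(\beta)$ does not lie in any $U_c$, so ``the sheet of $U_c$ containing $\Lambda(x)$'' and the ``short arc inside $U_c$ from the lifted endpoint of $\ell$ to $\Lambda(x)$'' are not defined. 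This is presumably a repairable slip, but the recipe of \cite[Proposition 5.2.2]{nek} needs honest paths from $\beta$ to the points of $f^{-1}(\beta)$ before anything can be computed.

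The substantive gap is the last step, which you flag but do not resolve. The theorem asserts $g_{z_0}|_{x_i} = g_c$ \emph{exactly}, and simultaneously for every $z_0 \in P_f$, with one fixed bijection $\Lambda$, one fixed family of connecting paths, and one fixed family of generating loops $\gamma_z$, $z \in P_f$. Your choices are tailored to a single $z_0$ (the proposed $\ell_x$ are built from the particular $\ell$ occurring in $\gamma_{z_0}$), so at best you treat one generator at a time; and even for that one generator, what the argument yields is that the non-trivial restriction is a loop based at $\beta$ that winds once around $c$, i.e.\ an element \emph{conjugate} to $g_c$ in $\pi_1(\C \setminus P_f, \beta)$. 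Whether it equals $g_c$ is not a local question near $c$ at all: it depends on how the access path from $\beta$ to that small loop winds around the other points of $P_f$, so the remark that ``the local model $w \mapsto w^m$ makes the identification unambiguous'' cannot close this. What is needed is a single global compatible choice — e.g.\ disjoint arcs from the basepoint (or from infinity) to the points of $P_f$, with the generator loops $\gamma_z$ defined from these arcs and the connecting paths obtained by lifting them — and a check that with this choice every restriction loop is either null-homotopic or literally the chosen $\gamma_c$; this planar bookkeeping is the actual content of the proof in \cite{nek}. Without it you obtain only that each $g_{z_0}$ is conjugate to an element of the stated form, which is weaker than the statement and weaker than what the paper later uses (the kneading-graph arguments need the restrictions to be the generators themselves, closed under restriction as an automaton).
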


\begin{remark}
Although we don't regard $\infty$ as being in $P_f$, Theorem \ref{compmon} nonetheless applies to it.  Because $f^{-1}(\infty) = \{\infty\}$, it is a point of multiplicity $d$, and we have that $g_\infty$ acts as a $d$-cycle on $X$, with restriction to some $x \in X$ giving $g_{\infty}$ and the other restrictions being trivial.  It follows from Proposition \ref{sphertrans} that $g_\infty$ is spherically transitive.  The fact that $G$ contains a spherically transitive element is also a consequence of Theorem \ref{char} and Lemma \ref{trans}. 
\end{remark}

As an illustration of this result, we show that the group in Example \ref{counter1} is a standard action of the iterated monodromy group of $f(z) = z^2 - 2$ on $X^*$, where $X = \{0,1\}$.  We have $P_f = \{-2, 2\}$, so that $G = \langle g_{-2}, g_2 \rangle$.  Now $f^{-1}(-2) = \{0\}$ and $\ord_f(0) = 2$, implying that $g_{-2}$ acts on $X$ as a 2-cycle.  Because $0 \not\in P_f$, the restrictions of $g_{-2}$ are trivial.  On the other hand $f^{-1}(2) = \{-2, 2\}$, so $g_2$ acts trivially on $X$.  Because $-2 \in P_f$ but $2 \not\in P_f$, the restriction of $g_2$ to one element of $X$ is trivial, while the other one is $g_{-2}$.  Either choice gives the same group up to conjugacy in $X^*$ (indeed, up to conjugacy in $G$, since conjugating by $g_{-2}$ exchanges the restrictions of $g_2$).  

\subsection{Automata and Moore diagrams}
A very useful description of $g \in \Aut(X^*)$ in terms of its wreath recursion comes via automata theory.  The set $Q(g) = \{g|_v : v \in X^*\}$ of all restrictions of $g$ may be viewed as the set of states of an automaton.  Being in a state $g|_w$ for some $w \in X^n$ and receiving an input letter $x \in X$, the automaton types on the output tape $g|_w(x)$ and proceeds to the state $(g|_w)|_x$, which by \eqref{lifts} is just $g|_{wx}$.  In this way the action of $g$ on any $v \in X^*$ may be determined.  
We formalize this in the following definition:
\begin{definition}
An {\em automaton} $A$ over the set $X$ is given by 
\begin{itemize}
\item the set of states, which we denote also by $A$;
\item a map $\tau: A \times X \to X \times A$.
\end{itemize}
If $\tau(a,x) = (y,b)$, then $y$ and $b$ as functions of $(a,x)$ are called the {\em output} and {\em transition function}, respectively.  We say that $A$ is {\em invertible} if each $a \in A$ acts on $X$ as a permutation. 
\end{definition}

The {\em Moore diagram} of an automaton $A$ provides a good method of visualization.  It is a directed labeled graph whose vertex set is the set $A$ of states of the automaton.   If $\tau(a,x) = (y,b)$, then there is an arrow from $a$ to $b$ labeled by $(x,y)$.  If $A$ is invertible, the Moore diagram of the inverse automaton is given by formally replacing each state $a$ by $a^{-1}$ and changing each arrow labeling from $(x,y)$ to $(y,x)$.  Given an automaton $A$ over a set $X$, it is easy to see that the states of $A$ define elements of $\Aut(X^*)$.  Indeed, we can recover the wreath recursion for $a \in A$ by 
noting that if $\tau(a,x) = (y,b)$ then $a(x) = y$ and $a|_x = b$.  In this case we say that $G = \langle A \rangle$ is generated by the automaton $A$.  

By Theorem \ref{compmon}, a standard action of the iterated monodromy group of a post-critically finite polynomial is generated by a set that is closed under restrictions.  Hence the automaton generating such a group is finite.  See Figure 
\ref{fig: bas} for an example.  


\begin{figure}[htbp]
\begin{center}
\includegraphics[height = 2.5in]{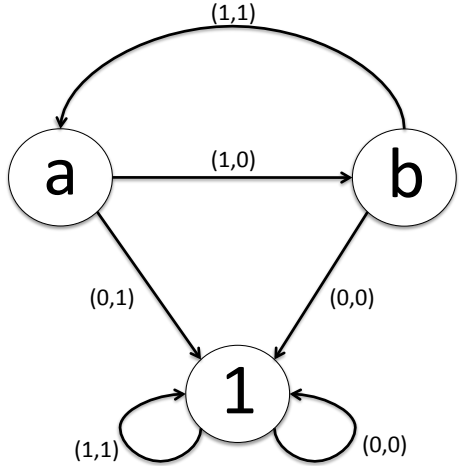}
\end{center}
\caption{Moore diagram of the automaton generating the Basilica group; see Example \ref{basilica1}. The state labeled 1 is the identity state, and is distinct from the element $1 \in X$.}
\label{fig: bas}
\end{figure}


\subsection{Bounded and finitary automorphisms}
\begin{definition} \label{bounded}
We say that $g \in \Aut(X^*)$ is {\em finite-state} if it is defined by a finite automaton, or equivalently if $\{g|_v : v \in X^*\}$ is a finite set.  We call $g$ {\em bounded} if it is finite-state and the sequence $$q_n = \#\{v \in X^n : g|_v \neq 1\}$$ is bounded.  We call $g$ {\em finitary} if $q_n = 0$ for all $n$ sufficiently large, or equivalently if there exists $n_0$ such that $g|_v$ is trivial for all words of length at least $n_0$.  
\end{definition}

Finitary automorphisms will play a major role in Sections \ref{sec: kneadgraph} and \ref{sec: last}.  The main fact we will use about the more general notion of bounded automorphisms is the following special case of a theorem of Nekrashevych and Bondarenko:
\begin{theorem}{\cite{bondnek}, \cite[Theorem 3.9.12]{nek}} \label{bndedthm}
Let $G \leq \Aut(X^*)$ be generated by a finite automaton whose states define bounded automorphisms of $X^*$.  Then $G$ is contracting.  
\end{theorem}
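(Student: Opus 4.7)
The plan is to establish that the set of restrictions of any element of $G$ eventually collapses into a finite set, by exploiting the sparsity structure of bounded automorphisms. Let $A$ denote the finite generating automaton and let $M = \max_{a \in A} \sup_n q_n(a)$, which is finite by hypothesis. First I would verify the closure property: the collection $B \subset \Aut(X^*)$ of bounded automorphisms forms a subgroup closed under taking restrictions. Closure under restrictions is immediate from $\{(g|_w)|_v\} \subseteq \{g|_{wv}\}$. Closure under products follows from \eqref{singleelt}, since $(gh)|_v \neq 1$ forces either $g|_{h(v)} \neq 1$ or $h|_v \neq 1$, and hence $q_n(gh) \leq q_n(g) + q_n(h)$. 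In particular every element of $G$ is bounded.

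The next step, and the conceptual heart of the argument, is to decompose each $g \in B$ into a finitary piece plus a bounded number of ``stable tails.'' For $g$ bounded, call an infinite ray $x_1 x_2 \cdots \in X^\omega$ an \emph{active ray} of $g$ if $g|_{x_1\cdots x_n} \neq 1$ for all $n$. Because $q_n(g) \leq M_g$ for every $n$, König's lemma applied to the activity subtree shows $g$ has at most $M_g$ active rays. Along any active ray, the restrictions take values in the finite state set $Q(g)$, so by the pigeonhole principle the sequence of restrictions is eventually periodic; I would collect these periodic tails into a finite set $T(g) \subseteq B$ whose elements all lie in $\mathcal{N}_0$ (in the notation of Proposition \ref{contracting}). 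Off the active rays, $g$ becomes trivial at bounded depth, so $g$ differs from a product of at most $M_g$ translates of elements of $T(g)$ by a finitary automorphism of bounded depth.

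For an arbitrary product $g = a_1 \cdots a_k \in G$, iterating \eqref{singleelt} gives $g|_v$ as a product of at most $k$ factors $a_i|_{w_i(v)}$, and by the sub-additive bound above, at most $kM$ of these are nontrivial for each fixed $v$. The key observation is that for $v$ of length exceeding the common finitary depth of the generators, each nontrivial $a_i|_{w_i(v)}$ belongs to $\bigcup_{a \in A} T(a)$, a fixed finite set $\mathcal{T}$. Consequently, every sufficiently deep restriction of $g$ is a product of at most $kM$ elements of $\mathcal{T}$, drawn from a finite pool independent of $v$. However, this still depends on $k$, so to extract an absolute bound I would argue as follows: for $v$ very deep, only those factors sitting on a common active ray of the product contribute, and the number of such simultaneous active rays for the product is again bounded by $M_g \leq kM$; but the restrictions along an active ray of $g$ are themselves eventually periodic, so they lie in a finite set determined by $g$. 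Combining the pigeonhole argument level-by-level across all generators simultaneously yields a universal finite set $\mathcal{N} \subset G$—products of boundedly many stable tails of generators—into which all sufficiently deep restrictions of any $g$ fall.

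The main obstacle is the last step: controlling deep restrictions of products uniformly in the word length $k$ of the product. A naive bound $kM$ grows without limit. Overcoming this requires the observation that products of finitary automorphisms are finitary, so after discarding the finitary corrections, what remains is a product whose nontrivial coordinates at depth $n$ all come from the finite set $\mathcal{T}$ of stable tails, and their number is bounded by the total number of active rays of the product, which \emph{is} uniformly bounded once one works modulo finitary elements. Making this reduction precise—i.e., quotienting by the ideal-like set of finitary automorphisms and showing the quotient structure is governed by finitely many tail patterns—is exactly the technical content of \cite{bondnek} and \cite[Theorem 3.9.12]{nek}, and is where I would invoke their detailed analysis rather than reprove it here.
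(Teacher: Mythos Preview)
The paper does not supply its own proof of this statement; it is quoted as an external result with citations to \cite{bondnek} and \cite{nek}, and is used as a black box in the proof of Corollary~\ref{n1cor}. Your proposal therefore goes further than the paper by attempting to outline the argument, but since you ultimately defer the decisive step to those same sources, your proposal and the paper end in the same place: both treat the theorem as input from the literature.

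That said, the sketch itself has a genuine gap at exactly the point you flag. The preliminary ingredients are correct---closure of bounded automorphisms under products and restrictions, finiteness of the set of active rays via K\"onig's lemma, eventual periodicity of restrictions along each ray into a finite tail set $\mathcal{T}$. But the sentence ``Combining the pigeonhole argument level-by-level across all generators simultaneously yields a universal finite set $\mathcal{N}$'' is an assertion of the conclusion, not an argument: the pigeonholing you describe produces, for each $g$, a finite set depending on $g$ (indeed you write ``a finite set determined by $g$''), and nothing you have said bounds these sets uniformly. Your proposed fix in the final paragraph---that the number of active rays of the product is bounded ``once one works modulo finitary elements''---is also not established; the bound $M_g \leq kM$ you wrote down still grows with $k$, and quotienting by finitary automorphisms does not by itself control how many tail factors can coexist at a single deep vertex. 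The missing mechanism in the actual proof is a geometric separation: distinct elements of $\mathcal{T}$ have distinct kneading rays, so beyond a fixed depth any two nontrivial tail factors at the same vertex that are in different stable states move to different children, forcing the number of factors surviving at any single deep vertex to be bounded by a quantity depending only on $\mathcal{T}$, not on $k$. Your outline gestures at this but does not pin it down.
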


\subsection{Kneading automata and theorem of Nekrashevych}
We require a strong result of Nekrashevych that characterizes the $G \leq \Aut(X^*)$ that are isomorphic to a standard action of the iterated monodromy group of a post-critically finite polynomial.  This characterization is purely in terms of a finite automaton that generates $G$.   
To state this result, we require the notion of a {\em tree-like multi-set of permutations}.  Recall that a multi-set of permutations of a set $X$ is a map $i \mapsto \pi_i$ from a set $I$ of indices to the set $\Sym(X)$ of permutations of $X$.  Thus for instance distinct indices may give the same permutation.  We denote the set $\{\pi_i : i \in I\}$ by $T$.  The {\em cycle diagram} associated to $T$ is an oriented 2-dimensional CW-complex whose set of $0$-cells is $X$.  For each cycle $(x_1, x_2, \ldots, x_n)$ of each $\pi_i \in T$, there is a 2-cell whose boundary passes through $x_1, x_2, \ldots x_n$ and no other elements of $X$, and whose order on the boundary corresponds to the order in the cycle.  Two different  2-cells can only intersect at 0-cells.  We call the {\em reduced cycle diagram} of $T$ the diagram obtained by deleting the 2-cells corresponding to fixed points of the $\pi_i$.  

\begin{definition}
A multi-set $T$ of permutations of a set $X$ is said to be {\em tree-like} if the cycle diagram of $T$ is contractible.
\end{definition}

For an example of a tree-like multi-set, see Figure \ref{fig: cycdiag}.  Note that we could add the identity to this multi-set any number of times and it would still be tree-like.  However, adding any non-trivial element of $S_6$ would yield a non-tree-like multi-set.  
\begin{figure}[htbp]
\begin{center}
\includegraphics[width = 5in]{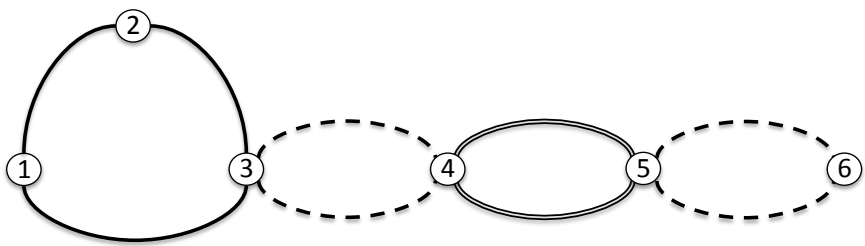}
\end{center}
\caption{The reduced cycle diagram of the multi-set $\{(1,2,3), (3,4)(5,6), (4,5)\}$ of elements of $S_6$.  The arrows are omitted, and the action of the elements are given by the solid, dashed, and doubled lines, respectively.  This multi-set is tree-like.}
\label{fig: cycdiag}
\end{figure}
Another way to visualize the action of a multi-set of permutations $T$ on a set $X$ is via its {\em cycle graph}.  We define it to be a bipartite graph obtained from the reduced cycle diagram by coloring each vertex of the former white, and replacing each 2-cell by a black vertex connected to the white vertices on the boundary of the 2-cell.  See Figure \ref{fig: cycgraph} for the cycle graph corresponding to the multi-set from Figure \ref{fig: cycdiag}.   Note that our definition differs slightly from that of \cite[p.  186]{nek}, where the cycle graph is not defined to be bipartite, but is otherwise identical.  
\begin{figure}[htbp]
\begin{center}
\includegraphics[width = 5in]{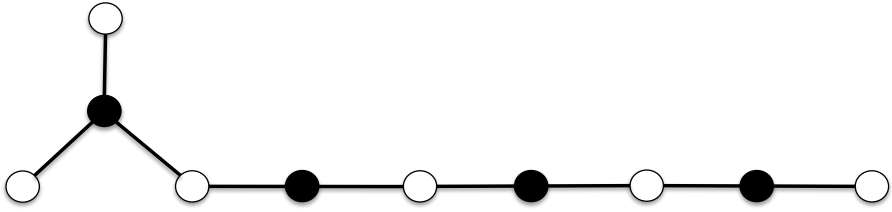}
\end{center}
\caption{The cycle graph of the multi-set of permutations given in Figure \ref{fig: cycdiag}}
\label{fig: cycgraph}
\end{figure}

The cycle graph and cycle diagram are clearly homotopically equivalent, and thus a multi-set of permutations is tree-like if and only if its cycle graph is a tree.  In Section \ref{sec: treelike} we give several results on tree-like sets of permutations.  

We may now state the characterization of iterated monodromy groups:

\begin{theorem}{\cite[Theorem 6.10.8]{nek}}  \label{char}
A subgroup $G \leq \Aut(X^*)$ is isomorphic to a standard action of the iterated monodromy group of a post-critically finite polynomial if and only if $G$ is the group generated by a finite invertible automaton $A$ with the following properties:
\begin{enumerate}
\item For each non-trivial $a \in A$, there is a unique arrow into the state $a$.  In other words, there is a unique $b \in A$ and $x \in X$ with $b|_x = a$.
\item For each $a \in A$ and each cycle $(x_1, x_2, \ldots, x_n)$ of the action of $a$ on $X$, the restriction $a|_{x_i}$ is non-trivial for at most one $x_i$.
\item The multi-set of permutations defined by the set of states of $A$ acting on $X$ is tree-like.  
\item Let $a_1 \neq a_2$ be non-trivial states of $A$ with $v_1, v_2 \in X^* \setminus 
\{ \emptyset \}$ satisfying $a_i|_{v_i} = a_i$ and $a_i(v_i) = v_i$ for $i = 1,2$.  Then there is no $h \in G$ with $h(v_1) = v_2$ and $h|_{v_1} = h$.   
\end{enumerate}
\end{theorem}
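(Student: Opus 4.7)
The plan is to establish the biconditional in Theorem~\ref{char} by handling each direction separately. The forward direction should be a relatively concrete application of Theorem~\ref{compmon}, while the reverse direction requires Thurston-style combinatorial realization theory and is where the real work lies.

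For the forward direction, I would start with a PCF polynomial $f$ and take $A$ to consist of the identity together with the generators $g_z$ for $z \in P_f$ arising from small loops in $\pi_1(\C \setminus P_f)$ based at $\beta$. Theorem~\ref{compmon} guarantees that every restriction $g_z|_x$ is either trivial or equals some $g_{z'}$ for $z' \in P_f$, so $A$ is automatically closed under restrictions and defines a finite invertible automaton generating $G$. Property~(1) then follows because, by Theorem~\ref{compmon}, the state $g_c$ appears as $g_{f(c)}|_x$ for exactly one letter $x$, giving the unique incoming arrow. Property~(2) is already built into the last sentence of Theorem~\ref{compmon}. For (3) I would choose the loops generating $\pi_1(\C \setminus P_f)$ to be pairwise disjoint and ordered compatibly with a planar spanning tree of $P_f \cup \{\infty\}$; the tree-like property of the resulting multi-set of permutations on $X$ then reflects planarity of the branched cover $f$, since each $2$-cell in the cycle diagram can be realized as a disk in a fundamental domain for $f$. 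Property~(4) should follow from the observation that distinct states $g_{c_1}, g_{c_2}$ come from loops around topologically distinct punctures and are therefore not related by the deck-transformation-like conjugations prescribed by the self-similar structure.

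For the reverse direction, given an automaton $A$ satisfying (1)--(4), the task is to produce a PCF polynomial $f$ with $\img(f)\cong G = \langle A\rangle$. The idea is to read off from $A$ the combinatorial data of a topological branched self-cover of $S^2$: the non-trivial states of $A$ play the role of the post-critical set $P$; the cycle structure of each state on $X$ prescribes the ramification profile at the corresponding preimage fiber; and the transition rule $b|_x = a$ dictates how points in $P$ chain together under the map, with (1) ensuring that each non-trivial $a \in A$ has a well-defined predecessor, i.e., a well-defined image in $P \cup \{\infty\}$. The tree-like hypothesis (3) is exactly what allows this abstract data to be realized by a planar branched cover $F\colon S^2\to S^2$. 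One then applies Thurston's characterization of rational maps in its polynomial form (as developed for PCF polynomials by Bielefeld--Fisher--Hubbard) to upgrade $F$ to an honest polynomial $f$, with condition (4) playing the role of ruling out the Thurston obstruction.

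The principal obstacle is this last step: verifying that (4) really is strong enough to prevent the existence of an invariant multicurve with Thurston eigenvalue at least one. This is the technical heart of the theorem and requires the full apparatus of self-similar bimodules and virtual endomorphisms developed in Nekrashevych's monograph; the natural reformulation is that (4) precisely forbids the kind of self-intersection pattern in the dual Moore diagram that would produce an obstructing annulus system. By contrast, the forward direction reduces largely to bookkeeping with Theorem~\ref{compmon}, the only subtlety being the planar choice of $\pi_1$ generators needed to secure (3).
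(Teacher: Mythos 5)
Theorem \ref{char} is not proved in this paper at all: it is quoted from Nekrashevych \cite[Theorem 6.10.8]{nek}, and the author explicitly uses only the easier ``only if'' direction (and never condition (2)). So there is no internal proof to compare yours against; what can be judged is whether your sketch would itself constitute a proof, and as written it would not.

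The forward direction is mostly sound bookkeeping: closure of $\{1\} \cup \{g_z : z \in P_f\}$ under restriction, property (1), and property (2) do follow from Theorem \ref{compmon} essentially as you say (modulo treating the states formally, in case $g_c = g_{c'}$ as tree automorphisms for distinct post-critical points). But properties (3) and (4) are only gestured at. Tree-likeness of the multi-set of permutations on $X$ needs an actual argument --- for instance a planarity/Euler-characteristic count showing that the lifts of a suitably chosen (planar) generating set of $\pi_1(\C \setminus P_f)$ produce exactly $d-1$ nontrivial cycles on the fiber and a connected, simply connected cycle diagram --- and your one-sentence justification of (4) (distinct punctures are ``not related by deck-transformation-like conjugations'') is not an argument: condition (4) quantifies over arbitrary $h \in G$ and arbitrary periodic words $v_1, v_2$, and its necessity is genuinely nontrivial. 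The reverse direction is where the real content of the theorem lies, and your proposal defers it entirely: you assert that (1)--(3) allow one to build a topological polynomial and that (4) ``precisely forbids'' a Thurston/Levy obstruction, but neither claim is argued, and the second is exactly the hard part of Nekrashevych's proof (which proceeds through the theory of kneading and twisted kneading automata rather than a bare appeal to Thurston rigidity). As a roadmap to the literature your outline is reasonable; as a proof it has gaps at (3), (4), and the whole sufficiency direction, and for the purposes of this paper the correct move is simply to cite \cite[Theorem 6.10.8]{nek}, as the author does.
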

For example, the automaton given in Figure \ref{fig: bas} satisfies all the conditions of Theorem \ref{char}.  
We do not use even close to the full strength of Theorem \ref{char}.  Indeed, we require only the far easier direction, which is that if $G$ is isomorphic to a standard action of an iterated monodromy group, then $G = \langle A \rangle$, where $A$ satisfies conditions (1)-(4).  Moreover, we do not use condition (2).  

We introduce a definition following the terminology of \cite{nek}:
\begin{definition} \label{kneadingdef}
A {\em kneading automaton} is a finite invertible automaton satisfying conditions (1)-(3) of Theorem \ref{char}.
\end{definition}

\section{Results on tree-like sets of permutations} \label{sec: treelike}

In this section we present several results that will play roles in the proofs of our mains theorems.  The first two appear in \cite{nek}.


\begin{lemma}{\cite[Proposition 6.7.5]{nek}}  \label{neklem2}
Let $A$ be a kneading automaton.  Then for any $n \geq 1$, the multi-set of permutations defined by the states of $A$ acting on $X^n$ is tree-like.    
\end{lemma}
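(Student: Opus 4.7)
My plan is to proceed by induction on $n$. The base case $n=1$ is immediate from condition (3) of Theorem \ref{char}, which is built into the definition of a kneading automaton. For the inductive step, I would assume tree-likeness at level $n-1$ and work with the decomposition $X^n = X\times X^{n-1}$ via the wreath-recursion formula $a(xw) = a(x)\cdot a|_x(w)$.

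For each $a\in A$ and each cycle $C=(x_1,\ldots,x_k)$ of $a$ on $X$, condition (2) of Theorem \ref{char} guarantees at most one restriction $a|_{x_i}$ is non-trivial; call it $a|_{x_{i_0}}$ when it exists. A direct computation using the wreath recursion will show that the cycles of $a$ on $C\times X^{n-1}$ biject with the cycles of $a|_{x_{i_0}}$ on $X^{n-1}$ — or with the singletons of $X^{n-1}$ if all restrictions in $C$ are trivial — with each length multiplied by $k$. Combining this with condition (1), which yields a bijection $(a,C)\mapsto a|_{x_{i_0}}$ from the pairs $(a,C)$ where $C$ contains a non-trivial restriction onto $A\setminus\{1\}$, I would verify by a routine bookkeeping computation the identity
\[
\sum_{a\in A} O_n(a) \;=\; (|A|-1)\,|X|^n + 1,
\]
where $O_n(a)$ denotes the number of orbits of $a$ on $X^n$. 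Since the cycle graph $\Gamma_n$ has $|X|^n + \sum_a(O_n(a)-F_n(a))$ vertices and $\sum_a(|X|^n-F_n(a))$ edges (with $F_n(a)$ the number of $a$-fixed points of $X^n$), this identity is equivalent to $|V(\Gamma_n)|-|E(\Gamma_n)|=1$.

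To conclude that $\Gamma_n$ is a tree it will remain to establish connectedness, which I expect to be the main obstacle. Connectedness of $\Gamma_n$ amounts to transitivity of $\langle A\rangle$ on $X^n$: the inductive hypothesis supplies this on $X^{n-1}$, so it suffices to show that the stabilizer of each $u\in X^{n-1}$ in $\langle A\rangle$ acts transitively on the fiber $u\cdot X$. One cannot simply apply the base case fiber by fiber, since the multi-set $\{a|_u:a(u)=u\}$ need not itself be tree-like on $X$. My plan is instead to move between fibers along lifts to $\Gamma_n$ of the $2$-cells of $\Gamma_{n-1}$, tracking how restrictions compose along cycles in $\Gamma_{n-1}$: condition (1) controls when distinct pairs $(a,C)$ produce the same top-level permutation of the last letter, while the global tree-likeness of $\Gamma_{n-1}$ supplies enough flexibility to realize arbitrary permutations of $u\cdot X$. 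Combined with the Euler identity above, connectedness will force $\Gamma_n$ to be a tree, completing the induction.
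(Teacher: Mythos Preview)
The paper does not prove this lemma; it simply cites \cite[Proposition 6.7.5]{nek}, so there is no in-paper argument to compare against. What follows is an assessment of your attempt on its own terms.

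Your Euler-characteristic half is correct. Using the decomposition $X^n = X\times X^{n-1}$ and conditions (1)--(2), the orbit count
\[
\sum_{a\in A} O_n(a) \;=\; (|A|-1)\,|X|^n + 1
\]
does go through by the induction you indicate (pairs $(a,C)$ with a non-trivial restriction biject with $A\setminus\{1\}$, and the remaining pairs contribute $|X|^{n-1}$ orbits each). So $|V(\Gamma_n)|-|E(\Gamma_n)|=1$ is established.

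The gap is the connectedness step. Your sketch (``move between fibers along lifts of 2-cells, tracking how restrictions compose'') is not yet an argument: you have not said which element of the stabilizer of $u$ realises a given transposition on $u\cdot X$, nor why condition (1) forces the restrictions along a path in $\Gamma_{n-1}$ to assemble into something transitive on the fibre. This is genuinely the non-formal part of the lemma, and as written you have only restated the goal.

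A cleaner route, once you have $|V|-|E|=1$, is to prove \emph{acyclicity} rather than connectedness (either one, together with the Euler identity, yields ``tree''). Use the drop-\emph{last}-letter projection $\pi:X^n\to X^{n-1}$, which is $a$-equivariant for every $a\in A$ and hence induces a simplicial map $\Gamma_n\to\Gamma_{n-1}$ (black vertices whose projection is a fixed point collapse to the corresponding white vertex). The fibre of $\pi$ over a white vertex $u$ is exactly the cycle graph on $X$ of the multi-set $\{a|_u : a\in A,\ a(u)=u\}$. By condition (1) (iterated to length $n-1$) the non-trivial elements of this multi-set are \emph{distinct} states of $A$, so this fibre embeds as a subgraph of $\Gamma_1$, which is a tree by condition (3); hence each such fibre is a forest. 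A cycle in $\Gamma_n$ would project to a closed walk in the tree $\Gamma_{n-1}$; analysing the forced backtracks of that walk against the forest structure of the fibres yields the contradiction. This is essentially how Nekrashevych's argument is organised, and it avoids the transitivity-on-$X^n$ statement entirely (which, in the logical order of \cite{nek}, is a \emph{consequence} of the present lemma via Corollary 6.7.7, so you should not lean on it here).
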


\begin{lemma}{\cite[Corollary 6.7.7]{nek}} \label{trans}
If $A$ is a kneading automaton, then the product of the states of $A$ (taken in any order) is a spherically transitive element of $\Aut(X^*)$.   
\end{lemma}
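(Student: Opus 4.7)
The plan is to combine Proposition \ref{sphertrans} and Lemma \ref{neklem2} with a combinatorial fact about tree-like multi-sets of permutations. Let $a_1, \ldots, a_k$ be the states of $A$ and set $g = a_{\sigma(1)} \cdots a_{\sigma(k)}$ for some $\sigma \in S_k$. By Proposition \ref{sphertrans}, it suffices to show that $g$ acts as a $d^n$-cycle on $X^n$ for every $n \geq 1$. The action of $g$ on $X^n$ is the product (in the order $\sigma$) of the actions of the $a_i$ on $X^n$, and by Lemma \ref{neklem2} these permutations of $X^n$ form a tree-like multi-set. Thus the lemma reduces to the following claim: \emph{if $T$ is a tree-like multi-set of permutations of a finite set $Y$ of size $N$, then the product of the elements of $T$ in any order is an $N$-cycle on $Y$.}

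I would prove this claim by induction on $N$. Contractibility of the cycle diagram forces the cycle graph to be connected, so every $y \in Y$ lies in some non-trivial cycle of some $\pi \in T$. The base case is when the cycle graph has a single black vertex, which means $T$ has exactly one non-identity element; this element must then be an $N$-cycle. For the inductive step, since black vertices have degree at least $2$ in the cycle graph, every leaf is a white vertex $y$ lying in exactly one non-trivial cycle $c$ of exactly one $\pi_i \in T$. Write $\pi_i = c \cdot r$ with disjoint supports and let $c'$ be the cycle obtained from $c$ by deleting $y$; replacing $\pi_i$ by $\pi_i' := c' \cdot r$ yields a multi-set $T'$ whose cycle graph on $Y' = Y \setminus \{y\}$ is obtained by pruning $y$ from the original, hence remains a tree. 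By induction, the product $P'$ of $T'$ in any induced ordering is an $(N-1)$-cycle on $Y'$ that fixes $y$ pointwise.

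To finish, let $P$ denote the product of $T$ in an arbitrary ordering, and $P'$ the corresponding product of $T'$. A short calculation shows $\pi_i = t \cdot \pi_i'$ for a transposition $t$ involving $y$, so writing $P = a \pi_i b$ gives $P = (a t a^{-1}) \cdot P'$. Since $y$ is touched by only the one factor $\pi_i$ in $T$, every factor comprising $a$ fixes $y$, and hence $a t a^{-1}$ is again a transposition involving $y$, say $(y, y^*)$ with $y^* \in Y'$. A direct trace then shows that multiplying $(y, y^*)$ into the $(N-1)$-cycle of $P'$ splices $y$ into exactly one edge of that cycle, producing an $N$-cycle on $Y$. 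The main obstacle is precisely this uniformity across orderings, but the local structure of $y$ as a leaf guarantees that the conjugation $a t a^{-1}$ preserves the transposition shape $(y, \cdot)$, ensuring the $N$-cycle conclusion regardless of $\sigma$. Granting the claim and applying it to the tree-like multi-set on $X^n$ provided by Lemma \ref{neklem2} shows that $g$ acts as a $d^n$-cycle on $X^n$ for every $n$, and the lemma follows via Proposition \ref{sphertrans}.
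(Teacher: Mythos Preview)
The paper does not actually give a proof of this lemma: it is stated as a citation to \cite[Corollary 6.7.7]{nek} and used as a black box. So there is no ``paper's own proof'' to compare against.

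Your argument is correct and is in fact the standard way to establish the result. A couple of small cleanups are worth making. First, the appeal to Proposition \ref{sphertrans} is a red herring: that proposition gives the $\rho_n$-is-a-$d$-cycle criterion, whereas what you actually use is the definition of spherical transitivity (a single element acts transitively on a finite set if and only if it acts as a full cycle). Just drop the reference. Second, state the base case of your induction as $N=1$ (trivial) rather than ``single black vertex''; the pruning step always applies for $N\ge 2$ since a tree with at least two vertices has a leaf and, as you note, black vertices have degree at least two. Third, when $c$ is a $2$-cycle the pruning removes both the white leaf $y$ and the now-degree-one black vertex $b_c$; you should say explicitly that removing a leaf from a tree yields a tree, applied twice, so the cycle graph of $T'$ on $Y'$ is again a tree.

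The key computation --- that $P=(ata^{-1})P'$ with $ata^{-1}=(y,a(y_2))$ because every factor in $a$ fixes the leaf $y$, and that multiplying an $(N-1)$-cycle on $Y'$ by a transposition $(y,y^*)$ with $y^*\in Y'$ yields an $N$-cycle --- is exactly right and handles all orderings uniformly.
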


\begin{lemma} \label{treefixed}
Let $T = \{\pi_1, \ldots, \pi_n\}$ be a tree-like multi-set of permutations of a set $X$, and let $\sigma = \prod_{i \in I} \pi_i$ for some non-empty $I \subseteq \{1, \ldots, n\}$.  Suppose that $\sigma(x) = x$ for some $x \in X$.  Then $\pi_i(x) = x$ for all $i \in I$. 
\end{lemma}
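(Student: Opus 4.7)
Here is my plan. I will analyze the closed walk in the cycle graph traced out by applying the factors of $\sigma$ successively to $x$, and show that the tree hypothesis forces every factor to fix $x$.

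First, write $I = \{j_1, \dots, j_k\}$ with pairwise distinct indices and $\sigma = \pi_{j_1} \pi_{j_2} \cdots \pi_{j_k}$. Set $y_0 = x$ and $y_m = \pi_{j_{k-m+1}}(y_{m-1})$ for $1 \le m \le k$, so that by hypothesis $y_k = \sigma(x) = x$. For each $m$ with $y_{m-1} \neq y_m$, the points $y_{m-1}$ and $y_m$ lie on a common non-trivial cycle of $\pi_{j_{k-m+1}}$, which corresponds to a unique black vertex $c_m$ of the cycle graph; the edges from $y_{m-1}$ to $c_m$ and from $c_m$ to $y_m$ together contribute a length-two segment to a walk $W$. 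Trivial steps (where $y_{m-1} = y_m$) contribute nothing. Concatenating these segments yields a closed walk $W$ based at $x$.

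The key observation is that the black vertices $c_m$ arising from distinct non-trivial steps are pairwise distinct. A black vertex in the cycle graph is tagged by the pair (index in the multi-set, cycle of that permutation), and since the elements of $I$ are pairwise distinct, different values of $m$ use different indices and hence different tags. So each $c_m$ is visited at most once in $W$.

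Now I invoke the tree hypothesis. In any closed walk in a tree, each traversed edge is used an even number of times, since crossings in the two directions must balance. If some step $m$ were non-trivial, then at the unique visit to $c_m$ the walk would cross each of the two distinct edges $\{c_m, y_{m-1}\}$ and $\{c_m, y_m\}$ exactly once; but $c_m$ is never revisited, so neither edge can be crossed a second time, contradicting the parity condition. Hence every step is trivial, i.e.\ $y_m = y_{m-1}$ for all $m$. Induction from $y_0 = x$ then yields $\pi_{j_{k-m+1}}(x) = x$ for every $m$, which is precisely the desired conclusion.

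The main obstacle is the bookkeeping around the multi-set structure: one must make precise that distinct indices in $I$ yield distinct black vertices even when the corresponding permutations happen to be equal in $\Sym(X)$, which follows from the convention that each $2$-cell of the cycle diagram is associated with a specific (index, cycle) pair. The remainder is a short parity argument for closed walks in trees, together with the homotopy equivalence between the cycle diagram and cycle graph already recorded in the paper.
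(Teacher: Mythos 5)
Your proof is correct, but it follows a genuinely different route from the paper's. The paper argues by induction on $\#I$: it peels off one factor $\pi_k$, and shows that if the product of the remaining factors sent $x$ to some $y \neq x$ (so that $\pi_k(y) = x$), one would obtain two distinct paths between the white vertices $x$ and $y$ in the cycle graph --- one tracked out by the action of the remaining factors, and one of length two through the black vertex of the cycle of $\pi_k$ containing $x$ and $y$ --- contradicting that the cycle graph is a tree; the inductive hypothesis then finishes. You avoid induction entirely: you assemble the whole closed walk traced by the successive images of $x$ under the factors and invoke the parity fact that a closed walk in a tree crosses every edge an even number of times, so the two edges at the black vertex of any non-trivial step, each crossed exactly once, give a contradiction. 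Both arguments rest on the same two ingredients, namely the tree-ness of the cycle graph and the multi-set convention that each indexed cycle gets its own $2$-cell (hence its own black vertex); your write-up makes the second ingredient explicit via the (index, cycle) tagging, which is exactly what the paper leaves implicit when it asserts the two paths are ``distinct,'' whereas the paper's inductive formulation is shorter and trades the global parity argument for the uniqueness of simple paths in a tree. Either way the conclusion $\pi_i(x) = x$ for all $i \in I$ follows, so your argument is a valid substitute for the one in the text.
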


\begin{proof}
Induct on $\#I$.  When $\#I = 1$ the statement is trivial.  Suppose that $\#I \geq 2$ and $\left(\prod_{i \in I} \pi_i \right)(x) = x$, and let $k \in I$ and $$\sigma_k = \left(\prod_{i \in I \setminus \{k\}} \pi_i \right).$$
If $\sigma_k(x) = y \neq x$, then necessarily $\pi_k(y) = x$.  Thus in the cycle graph of $T$ there is a path from the white vertex corresponding to $x$ to the white vertex corresponding to $y$, given by the action of $\sigma_k$.  There is a distinct path from the vertex corresponding to $y$ back to the vertex corresponding to $x$, given by the action of $\pi_k$.  This contradicts the hypothesis that the cycle graph is a tree.  


Therefore $\sigma_k(x) = x$, and hence $\pi_k(x) = x$.  Applying the inductive hypothesis to $\sigma_k$ gives that $\pi_i(x) = x$ for all $i \in I$. 
\end{proof}

\begin{lemma} \label{treecyc}
Let $T$ be a tree-like multi-set of permutations acting on a set $X$ with $\#X = d \geq 2$.  Then the reduced cycle diagram of $T$ has at most $(d-1)$ 2-cells, with equality if and only if every element of $T$ acts on $X$ as a (possibly empty) disjoint product of transpositions.  
\end{lemma}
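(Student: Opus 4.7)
The plan is to exploit the bipartite cycle graph $\Gamma$ of $T$: by construction $\Gamma$ is homotopy equivalent to the cycle diagram, and the tree-like hypothesis makes the cycle diagram contractible, so $\Gamma$ is a tree.

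I would set up notation as follows. The white vertices of $\Gamma$ are the $d$ elements of $X$; the black vertices are in bijection with the $2$-cells of the reduced cycle diagram. Let $c$ be the number of $2$-cells and $n_1, \ldots, n_c$ their boundary lengths. Since the reduced cycle diagram omits fixed-point cells, each $n_i \geq 2$. By construction each black vertex of boundary length $n_i$ is incident to exactly $n_i$ edges, and $\Gamma$ is bipartite so no edges connect vertices of the same color. Hence $\Gamma$ has $d + c$ vertices and $\sum_{i=1}^{c} n_i$ edges.

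From here the estimate is a clean Euler-characteristic count. Applying the tree identity $\#(\text{edges}) = \#(\text{vertices}) - 1$ yields $\sum_{i=1}^{c}(n_i - 1) = d - 1$. Since each summand is at least $1$, the bound $c \leq d - 1$ follows immediately. For the equality case, $c = d - 1$ forces every $n_i$ to equal $2$, i.e., every non-trivial cycle of every $\pi \in T$ is a transposition; this is the stated characterization, and the converse (every non-trivial cycle a transposition implies $c = d-1$) is immediate from the same identity. I expect no real obstacle beyond one minor bookkeeping point: one must check that $\Gamma$ has no isolated white vertex (which would correspond to an element of $X$ fixed by every $\pi \in T$), but any such vertex would disconnect $\Gamma$, contradicting the tree property when $d \geq 2$.
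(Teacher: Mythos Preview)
Your proof is correct and slightly cleaner than the paper's. Both arguments rest on the fact that the bipartite cycle graph $\Gamma$ is a tree, but they extract the bound differently. The paper proceeds by induction on $d$: it locates a white leaf $v$ (an element of $X$ moved by exactly one $\pi_i$), deletes the cycle of $\pi_i$ containing $v$ to obtain a tree-like multi-set $T'$ on a strictly smaller set $X'$, and observes that exactly one black vertex is lost while at least one white vertex is lost (exactly one if and only if the deleted cycle is a transposition). Your approach replaces this inductive peeling by a single application of the tree identity $\#\text{edges} = \#\text{vertices} - 1$, which yields $\sum_i (n_i - 1) = d - 1$ directly. The advantage of your route is economy: the inequality and the equality characterization drop out in one line. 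The paper's inductive argument, on the other hand, makes the structure of the extremal case slightly more visible step by step, but it is in essence the same count unwound. Your treatment of the isolated-vertex issue is fine; since $\Gamma$ has $d\geq 2$ white vertices and a tree is connected, no white vertex can be isolated.
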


\begin{proof}
We induct on $d$.  If $d = 2$, then the reduced cycle diagram of $T$ has a single 2-cell, and the unique element of $T$ acting non-trivially on $X$ acts as a transposition.  Hence the lemma holds.  Assume that $d \geq 3$, and consider the cycle graph of $T$.  Because it is a tree, there must exist a vertex $v$ of degree 1 (a {\em leaf} of the tree).  This vertex must be white, since the black vertices by definition correspond to cycles and so have degree greater than one.  Note that $v$ is connected to a unique black vertex $b$, and hence is fixed by all but one element $\pi_i$ of $T$.  Consider the element $\pi'_i$ obtained by deleting from $\pi_i$ the cycle containing $v$.  Replacing $\pi_i$ by $\pi'_i$ gives a new multi-set $T'$ whose cycle graph is the same as that of $T$, except that $b$ and all leaves connected to $b$ have been deleted.  Note this results in deleting at least one white vertex, namely $v$, and this is the only white vertex deleted if and only if the deleted cycle of $\pi_i$ was a 2-cycle.  

Thus $T'$ is tree-like and acts on a set $X'$ with $\#X' \leq \#X - 1$; moreover we have equality if and only if the only cycle in an element of $T$ that is not in an element of $T'$ is a 2-cycle.  We may apply the inductive hypothesis to get that there are at most $\#X' - 1$ black vertices in the cycle graph of $T'$, with equality if and only if all elements of $T'$ are (possibly empty) disjoint products of transpositions.  But this cycle graph contains exactly one fewer black vertex than the cycle graph of $T$, and hence the latter has at most $\#X'$ black vertices, with equality if and only if all elements of $T$ are (possibly empty) products of disjoint 2-cycles.  The number of black vertices in the cycle graph of $T$ is by definition the same as the number of 2-cells in the reduced cycle diagram of $T$. 
\end{proof}

\begin{lemma} \label{treeperms}
Let $T = \{\pi_1, \ldots, \pi_n\}$ be a tree-like multi-set of permutations of a set $X$ with $\#X = d$.  
\begin{enumerate}
\item For any $i \neq j$, we have $\#\Fix(\pi_i) + \#\Fix(\pi_j) \geq 2$.
\item If $\#\Fix(\pi_i) + \#\Fix(\pi_j) \leq 3$ for some $i \neq j$, then $\pi_k$ is the identity for all $k \not\in \{i, j\}$.  
\end{enumerate}
\end{lemma}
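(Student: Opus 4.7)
The plan is to reduce both statements to a single Euler-characteristic computation on the cycle graph of $T$. Let $m_k = d - \#\Fix(\pi_k)$ denote the number of points moved by $\pi_k$, and let $B$ denote the number of black vertices in the cycle graph, equivalently the total number of non-trivial cycles of the $\pi_k$ counted over $k$. Counting the edges of the cycle graph two ways (each black vertex has degree equal to the length of its cycle, so edges are in bijection with pairs consisting of a non-trivial cycle together with one of its elements), we get $E = \sum_k m_k$, while the vertex count is $V = d + B$. Since $T$ is tree-like, the cycle graph is a tree (it is homotopy-equivalent to the contractible cycle diagram), so $E = V - 1$, yielding the master identity
\[
\sum_k m_k \;=\; d + B - 1.
\]

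For part (1), the trivial inequality $m_i + m_j \leq \sum_k m_k = d + B - 1$ combined with Lemma \ref{treecyc}, which gives $B \leq d - 1$, immediately yields $m_i + m_j \leq 2d - 2$, equivalently $\#\Fix(\pi_i) + \#\Fix(\pi_j) \geq 2$.

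For part (2), the hypothesis translates to $m_i + m_j \geq 2d - 3$. The master identity together with $B \leq d - 1$ then forces
\[
\sum_{k \notin \{i,j\}} m_k \;=\; (d + B - 1) - (m_i + m_j) \;\leq\; B - d + 2 \;\leq\; 1.
\]
The key finishing observation is that each individual $m_k$ is either $0$ (when $\pi_k$ is the identity) or at least $2$ (since any non-trivial cycle has length $\geq 2$, so a non-identity permutation moves at least two points). A non-negative sum bounded above by $1$ in which every summand is either $0$ or $\geq 2$ must have all summands equal to $0$, so $\pi_k = 1$ for every $k \notin \{i, j\}$, as desired.

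The only delicate point I anticipate is justifying the edge count and the Euler identity $E = V - 1$ for the cycle graph; this is routine given the homotopy-equivalence between the cycle graph and the contractible cycle diagram, but the write-up should make that bridge explicit. Everything after the master identity is bookkeeping, with the cycle-length-at-least-two dichotomy doing the real work in part (2).
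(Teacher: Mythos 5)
Your proof is correct, and it takes a noticeably different (and cleaner) route than the paper's. The paper localizes to the pair $\{\pi_i,\pi_j\}$: it applies the planar Euler formula $V-E+F = 2+(c-1)$ to the \emph{reduced cycle diagram of just those two permutations}, bounds the face count by $d$ via Lemma \ref{treecyc}, and then must track the number $c$ of connected components; part (2) comes out of the equality case, via a separate observation that if the pair's cycle graph is already connected it must coincide with that of $T$, forcing the remaining $\pi_k$ to be trivial. You instead work globally with the cycle graph of all of $T$, where tree-likeness gives the one-line identity $E=V-1$, i.e.\ $\sum_k m_k = d+B-1$ with $m_k = d-\#\Fix(\pi_k)$ and $B$ the number of black vertices; combined with $B\le d-1$ (Lemma \ref{treecyc}, though in your setup it also follows directly from $E\ge 2B$ since every black vertex has degree at least $2$), part (1) is immediate and part (2) follows from $\sum_{k\notin\{i,j\}} m_k \le 1$ together with the dichotomy $m_k=0$ or $m_k\ge 2$. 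This avoids planarity, face counts, and the disconnected-case bookkeeping entirely, and it delivers the triviality of the other $\pi_k$ directly rather than through the coincidence-of-diagrams argument. Your one flagged point is the right one to make explicit in a write-up: the identity $E=V-1$ rests on the cycle graph being a (connected) tree, which the paper supplies by asserting the homotopy equivalence of the cycle graph with the contractible cycle diagram; note also that connectedness ensures there are no isolated white vertices, so $V=d+B$ is the correct vertex count.
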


\begin{proof}
We begin by noting that by definition the cycle graph (and thus the reduced cycle diagram) of $T$ is a contractible tree, and hence connected.  If the cycle graph (equivalently, reduced cycle diagram) of some subset $S$ of $T$ is also connected, then the two cycle graphs must coincide, and hence all elements of the multi-set $S \setminus T$ are the identity.  

Consider the reduced cycle diagram of the multi-set $\{ \pi_i, \pi_j\}$, where $i \neq j$.  It is a (possibly disconnected) planar graph, and hence by Euler's formula satisfies 
\begin{equation} \label{euler}
V - E + F = 2 + (c-1), 
\end{equation} 
where $V, E,$ and $F$ denote the numbers of vertices, edges, and faces (counting the face at infinity), respectively, and $c$ denotes the number of connected components of the graph.  Now the vertex set is just $X$, so $V = d$.  There are $r$ edges for each $r$-cycle of $\pi_i$ or $\pi_j$, where $r > 1$ (recall that fixed points do not appear in the reduced cycle diagram).  Thus $E = (d - \#\Fix(\pi_i)) + (d- \#\Fix(\pi_j))$.  Finally, there is one face for each cycle of $\pi_i$ or $\pi_j$, plus the face at infinity.  Because the reduced cycle diagram of $\{ \pi_i, \pi_j\}$ is a subset of the reduced cycle diagram for $T$, we have from Lemma \ref{treecyc} that $\pi_i$ and  $\pi_j$ have at most $d-1$ cycles between them, and hence $F \leq d-1 + 1 = d$.   Therefore \eqref{euler} gives
\begin{equation} \label{euler2}
2d - (\#\Fix(\pi_i) + \#\Fix(\pi_j)) + 2 = E + 2 = V + F - (c-1) \leq 2d - (c-1),
\end{equation}
and assertion (1) follows.  

Note that in \eqref{euler2} we have equality if and only if $F = d$, which occurs precisely when the cycle diagram of $\{ \pi_i, \pi_j\}$ has $d-1$ 2-cells.  When this happens, we have by Lemma \ref{treecyc} that the number of 2-cells of the cycle diagram of $\{ \pi_i, \pi_j\}$ is the same as the number of 2-cells of the cycle diagram of $T$, and hence the two diagrams coincide.  It follows that $c = 1$.  We have thus shown that either $c = 1$ or 
\begin{equation} \label{euler3}
2d - (\#\Fix(\pi_i) + \#\Fix(\pi_j)) + 2 \leq 2d - c.
\end{equation}
Now \eqref{euler3} implies that either $c = 1$ or $c \leq \#\Fix(\pi_i) + \#\Fix(\pi_j) - 2$.  In particular, either $c = 1$ or $\#\Fix(\pi_i) + \#\Fix(\pi_j) \geq 4$.  This together with the remarks at the beginning of the proof establish assertion (2).  
\end{proof}

\section{Kneading graphs and the structure of $\mathcal{N}_1$} \label{sec: kneadgraph}



In this section we exploit condition (1) of Theorem \ref{char} and the results of Section \ref{sec: treelike} to study the set 
$$\mathcal{N}_1 = \{g \in G : \text{$g|_v = g$ and $g(v) = v$ for some non-empty $v \in X^*$}\}$$
first defined on p. \pageref{ndefs}.
 
Condition (1) of Theorem \ref{char} implies that if we delete the trivial state from the Moore diagram of a kneading automaton $A$ (along with all the arrows originating at the trivial state) then then the resulting graph is a disjoint union of cycles with trees attached to them.  We call such a diagram the {\em reduced Moore diagram} of $A$.  See Figure \ref{fig: kneadingaut}.  
\begin{figure}[htbp]
\begin{center}
\includegraphics[width = 5in]{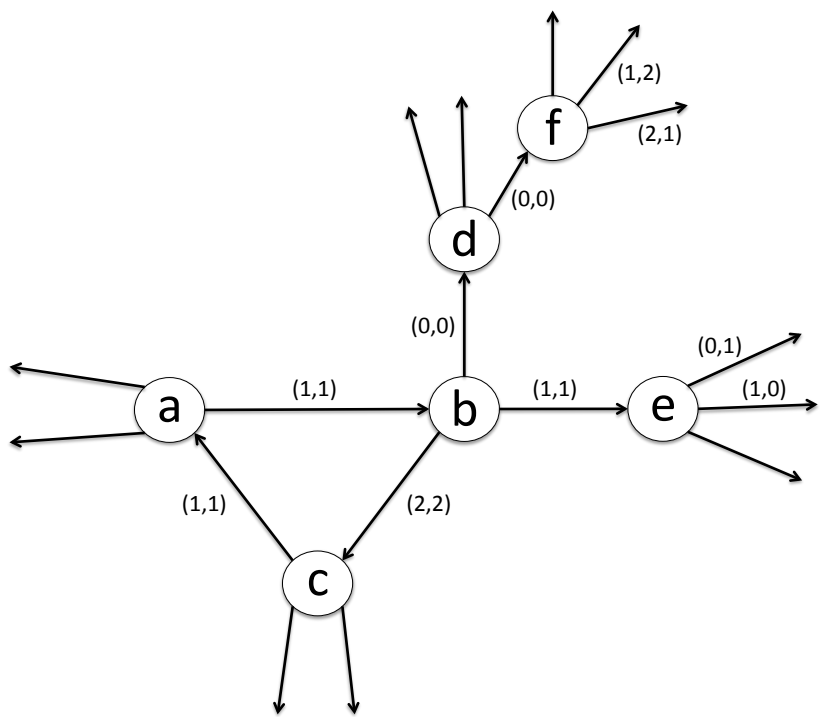}
\end{center}
\caption{The reduced Moore diagram of a kneading automaton over $X = \{0,1,2\}$ satisfying all the conditions of Theorem \ref{char}.  Labelings on arrows to the identity state have been deleted, except where the action on $X$ is non-trivial.}
\label{fig: kneadingaut}
\end{figure}

In particular, the states not in cycles have the property that all restrictions to sufficiently long words are the identity, and hence they define finitary automorphisms of $X^*$.  
To each state $a$ in a cycle of the Moore diagram we can associate its {\em kneading sequence} $x_1x_2\cdots \in X^{\omega}$, which is the unique infinite word such that for each $v_n := x_1\cdots x_n$, $a|_{v_n}$ belongs to the cycle containing $a$.  We refer to $v_n$ as the {\em length-$n$ kneading sequence} of $a$.  The (infinite) kneading sequence of any given state is periodic, with period dividing the length of the cycle in which the element lies.  For instance, for the automaton in Figure \ref{fig: kneadingaut}, the kneading sequences of $a$, $b$, and $c$ are $\overline{121}, \overline{211},$ and $\overline{112}$, respectively, where the bars denote repeating.  By hypothesis $A$ is invertible, and recall that the Moore diagram of the inverse automaton is given by replacing each state $a$ by $a^{-1}$ and changing each arrow labeling from $(x,y)$ to $(y,x)$.  Hence $a^{-1}$ is in a cycle of the Moore diagram of the inverse automaton of $A$ if and only if $a$ is in a cycle of the Moore diagram of $A$.  Each such $a^{-1}$ has a kneading sequence as before.  Let $C$ denote the collection of states of $A$ that are in cycles of the Moore diagram, together with their inverses. 

Let $m$ be the least common multiple of the periods of the kneading sequences of the elements of $C$.  The {\em kneading graph} of the automaton $A$ is the directed graph whose vertex set is the set of length-$m$ kneading sequences of the states belonging to $C$.     
There is a directed edge from $s_1$ to $s_2$ if $s_1$ is the length-$m$ kneading sequence for some $a \in C$ and $a(s_1) = s_2$.  We label such an edge with the element $a$.  Two kneading graphs are pictured in Figure \ref{fig: kneadinggraph}.
\begin{figure}[htbp]
\begin{center}
\includegraphics[width = 5in]{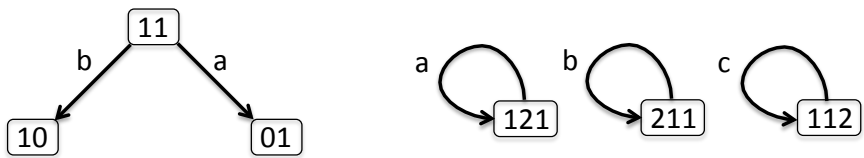}
\end{center}
\caption{The kneading graphs of the Basilica group (left) and the group generated by the automaton shown in Figure \ref{fig: kneadingaut}.}
\label{fig: kneadinggraph}
\end{figure}

Recall that the set of {\em stable} elements of $G \leq \Aut(X^*)$ is 
$$\mathcal{N}_0 = \{g \in G : \text{$g|_v = g$ for some non-empty $v \in X^*$}\}.$$
When $G$ is generated by an automaton $A$ satisfying the hypotheses of Theorem \ref{char}, the kneading graph of $A$ provides an algorithm for determining $\mathcal{N}_0$ and $\mathcal{N}_1$.  This idea first appeared in 
\cite[Lemma 3.2]{bartnek}, which deals with certain automata in the case $d = 2$.    

We require some terminology relating to the kneading graph.  By a {\em path} we mean any sequence $s_1, s_2, \ldots, s_n$ of vertices such that there is a directed edge from $s_{i-1}$ to $s_i$ or a directed edge from $s_i$ to $s_{i-1}$, for all $i = 2, \ldots, n$.  Note that this is more general than the usual notion of a path in a directed graph, since we permit paths to traverse edges against their direction.  We further stipulate that our paths have {\em no back-tracking}, that is, each edge traversed is either distinct from the previous edge, or is the same as the previous edge and also in the same direction (i.e. consists of going again around a cycle of length one).  By a {\em circuit}, we mean a path with a common starting and ending vertex; we allow repeats of vertices and edges.  A {\em cycle} is a circuit that repeats only its common starting and ending vertex.  
Recall that a kneading automaton is one satisfying conditions (1)-(3) of Theorem \ref{char}.
\begin{proposition} \label{kneadstab}
Let $G$ be generated by a kneading automaton $A$.  Then $\mathcal{N}_0$ consists of words in $A \cup A^{-1}$ obtained from the labels of paths in the kneading graph of $A$, where one reads the inverse of the labeled element if one follows an arrow backwards.  To assemble the word corresponding to a given path in the kneading graph, one copies the letters down from right to left.  

In addition, $\mathcal{N}_1$ consists of the words obtained from labels of ciruits in the kneading graph of $A$.
\end{proposition}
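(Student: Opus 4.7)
The plan is to prove both inclusions, with the forward direction (paths yield elements of $\mathcal{N}_0$, circuits yield elements of $\mathcal{N}_1$) being a direct induction on path length, and the reverse direction requiring a more careful argument using condition (1) of Theorem \ref{char}.

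For the forward direction, fix a path $s_0, s_1, \ldots, s_k$ in the kneading graph with labels $\ell_1, \ldots, \ell_k \in A \cup A^{-1}$ (each $\ell_i$ being the edge label if the $i$th edge is traversed forward, or its inverse if backward). Set $w := \ell_k \ell_{k-1} \cdots \ell_1$. I will prove by induction on $k$ that $w|_{s_0} = w$ and $w(s_0) = s_k$, from which $w \in \mathcal{N}_0$ (and $w \in \mathcal{N}_1$ when $s_0 = s_k$) follows immediately. The case $k = 0$ (empty path, $w = 1$) is trivial. For each $\ell_i$ one checks that $\ell_i|_{s_{i-1}} = \ell_i$ and $\ell_i(s_{i-1}) = s_i$: for a forward edge labeled $a$, this holds by the definition of the kneading graph, while for a backward edge the identities $a^{-1}|_{a(s)} = (a|_s)^{-1} = a^{-1}$ and $a^{-1}(a(s)) = s$ give the analogous statement. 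Setting $w' = \ell_k \cdots \ell_2$, the inductive hypothesis applied to the subpath $s_1, \ldots, s_k$ yields $w'|_{s_1} = w'$ and $w'(s_1) = s_k$. Then $w = w' \ell_1$, and formulas \eqref{singleelt} give $w(s_0) = w'(\ell_1(s_0)) = w'(s_1) = s_k$ and $w|_{s_0} = w'|_{\ell_1(s_0)} \cdot \ell_1|_{s_0} = w'|_{s_1} \cdot \ell_1 = w' \ell_1 = w$, completing the induction.

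For the reverse direction, suppose $g \in \mathcal{N}_0 \setminus \{1\}$ with $g|_v = g$ for some non-empty $v$. After replacing $v$ by a suitable power I may assume $|v|$ is a multiple of $m$. Fix a minimum-length expression $g = c_1 c_2 \cdots c_k$ with each $c_i \in A \cup A^{-1}$ non-trivial. Applying \eqref{singleelt} iteratively yields $g|_v = c_1|_{u_1} c_2|_{u_2} \cdots c_k|_{u_k}$, where $u_i = (c_{i+1} \cdots c_k)(v)$ (with $u_k = v$ and $u_0 = g(v)$). Since this equals $g$ which has reduced word length $k$, while the right-hand product has formal length $k$, minimality of $k$ forces every $c_i|_{u_i}$ to be non-trivial, and indeed equal to $c_i$ (see the obstacle discussion below). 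Consequently each $c_i$ is stable under restriction along $u_i$, hence lies in a cycle of the reduced Moore diagram (or its inverse), so $c_i \in C$; moreover $u_i$ is a concatenation of copies of the kneading sequence of $c_i$. Let $s_i$ be the length-$m$ initial segment of $u_i$ (which is the length-$m$ kneading sequence of $c_i$), and similarly let $s_0$ correspond to $u_0 = g(v)$. The relation $u_{i-1} = c_i(u_i)$ together with $c_i|_{s_i} = c_i$ forces $s_{i-1} = c_i(s_i)$ at the level of length-$m$ kneading sequences, so $s_k, s_{k-1}, \ldots, s_0$ is a path in the kneading graph with edges labeled $c_k, c_{k-1}, \ldots, c_1$ (each in its forward direction). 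Reading the labels right-to-left reconstructs $g = c_1 c_2 \cdots c_k$. When $g \in \mathcal{N}_1$, so $g(v) = v$, one has $s_0 = s_k$ and the path is a circuit, giving the $\mathcal{N}_1$ statement.

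The main obstacle is the step establishing that each $c_i|_{u_i}$ equals $c_i$. Non-triviality follows from the minimality of $k$, since any trivial factor would shorten the product. To upgrade to equality I plan to exploit condition (1) of Theorem \ref{char}: each non-trivial state of $A$ has a unique predecessor in the Moore diagram. Applying this inductively to the trajectory of single-letter restrictions $c_i, c_i|_{u_{i,1}}, c_i|_{u_{i,1} u_{i,2}}, \ldots, c_i|_{u_i}$, one sees that the final restriction is forced to coincide with $c_i$ provided the trajectory remains in the cyclic part of the reduced Moore diagram, which is ensured by iterating $v \mapsto v^r$ (noting $g|_{v^r} = g$ still holds) and using that the nucleus is finite by Proposition \ref{contracting}, so non-triviality must propagate indefinitely. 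The same uniqueness rules out adjacent cancellations in the product $c_1|_{u_1} \cdots c_k|_{u_k}$ beyond those already excluded by minimality, certifying that the word recovered from the path in the kneading graph matches $g$ exactly.
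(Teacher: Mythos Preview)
Your proposal is correct and follows essentially the same approach as the paper. Both proofs handle the forward direction by direct verification that path labels satisfy $g|_{s_0}=g$, and the reverse direction by taking a minimal-length expression $g=c_1\cdots c_k$, expanding $g|_v$ via \eqref{singleelt}, and using length preservation to force each factor to remain non-trivial under arbitrarily long restriction.

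The one expositional difference worth noting is in the ``obstacle'' step. The paper phrases this via the finitary/non-finitary dichotomy: if any factor $a_i^{\epsilon_i}|_{\cdots}$ were finitary, then $\ell(g|_w)<\ell(g)$ for $w$ long enough, contradicting $g|_{v_n}=g$ for all initial words $v_n$ of $vv\cdots$; hence each factor is non-finitary, so its argument is the kneading sequence and (since $m$ divides the length) the restriction equals the factor itself. Your version---iterating $v\mapsto v^r$ to force the single-letter trajectory to stay in the cyclic part of the reduced Moore diagram---is the same idea unwound, but you should make explicit that $u_i^{(r)}=(c_{i+1}\cdots c_k)(v^r)$ has $u_i$ as an initial segment (because $v^r$ extends $v$ and tree automorphisms preserve prefixes), so that the non-trivialities for different $r$ assemble into a single infinite non-trivial trajectory. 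Your appeal to Proposition~\ref{contracting} is unnecessary here; the structure of the reduced Moore diagram under condition~(1) already gives that an infinite non-trivial forward trajectory must lie in a cycle. Finally, your last sentence about ``adjacent cancellations'' is the no-back-tracking condition, which as in the paper follows immediately from minimality of the word for $g$.
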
 

\begin{remark}
For instance, the path of length 2 going from $01$ to $10$ in the kneading graph of the Basilica group (Figure \ref{fig: kneadinggraph}, left) gives $ba^{-1}$.  The other path of length 2 gives $ab^{-1}$, while the paths of length 1 yield $a, a^{-1}, b$, and $b^{-1}$.  Thus $\mathcal{N}_0$ consists of these six elements plus the identity.  Since there are no circuits in the kneading graph, $\mathcal{N}_1$ is trivial.  
\end{remark}

\begin{proof}
Recall that condition (1) of Theorem \ref{char} ensures that each $a \in A$ not in a cycle of the reduced Moore diagram is finitary, that is, has trivial restriction on all sufficiently long words in $X^*$.  If $a$ is in a cycle, the length-$n$ kneading sequence $v_n$ of $a$ is the unique word of length $n$ such that
$a|_{v_n}$ is not finitary.  We often simply call $v_n$ the kneading sequence of $a$ when the length $n$ is clear from context.  If $a \in A$ has kneading sequence $v_n$, then from \eqref{singleelt} we have
\begin{equation} \label{inv}
1 = (a^{-1}a)|_{v_n} = a^{-1}|_{a(v_n)}a|_{v_n}. 
\end{equation} 
If $h \in G$ is finitary and $g \in G$ is not, then for sufficiently large $k$ and any $v \in X^k$, we have $(hg)|_v = h|_{g(v)} g|_v = g|_v$.  Hence $hg$ cannot be finitary since $g$ is not finitary.  By hypothesis $a|_{v_n}$ is not finitary, and thus from \eqref{inv} we have that $a^{-1}|_{a(v_n)}$ is not finitary, so that that $a(v_n)$ is the kneading sequence for $a^{-1}$.  Because $a^{-1}a(v_n) = v_n$, multiplication by $a^{-1}$ sends the kneading sequence of $a^{-1}$ to the kneading sequence of $a$.  

For $g \in \langle A \rangle,$ let 
\begin{equation} \label{exp}
g = a_1^{\epsilon_1}a_2^{\epsilon_2} \cdots a_k^{\epsilon_k},
\end{equation}
where the $a_i$ are (not necessarily distinct) elements of $A$, $\epsilon_i \in \{\pm 1\}$, and this expression is minimal length among all words in $A \cup A^{-1}$ giving $g$.  We denote $k$ by $\ell(g)$, and call it the length of $g$.  
From \eqref{prod} it follows that $\ell(g|_v) \leq \ell(g)$ for any $g \in \langle A \rangle$ and any $v \in X^*$.  

Suppose that $g \in \mathcal{N}_0$, so that there exists a non-empty $v \in X^*$ with $g|_v = g$.  If $v_n$ is the length-$n$ initial word of $vv\cdots$, then 
$\ell(g|_{v_n}) = \ell(g)$.    Hence each $a_i$ lies in a cycle of the Moore diagram of $A$, since otherwise at least one would be finitary, implying $\ell(g|_w) < \ell(g)$ for $w$ sufficiently long.  
Let $m$ be the least common multiple of the periods of the (infinite) kneading sequences of the $a_i$. Then 
\begin{multline} \label{vampire}
g|_{v_m} = (a_1^{\epsilon_1}a_2^{\epsilon_2} \cdots a_{k-1}^{\epsilon_{k-1}} a_k^{\epsilon_k})|_{v_m} = \\
a_1^{\epsilon_1}|_{a_2^{\epsilon_2} \cdots a_{k-1}^{\epsilon_{k-1}} a_k^{\epsilon_k}(v_m)} \cdot a_2^{\epsilon_2}|_{a_3^{\epsilon_3} \cdots a_{k-1}^{\epsilon_{k-1}} a_k^{\epsilon_k}(v_m)}\cdots a_{k-1}^{\epsilon_{k-1}}|_{a_k^{\epsilon_k}(v_m)} \cdot a_k^{\epsilon_k}|_{v_m}
\end{multline}
None of the elements in the right-hand side of \eqref{vampire} can be finitary, for otherwise $\ell(g|_w) < \ell(g)$ for sufficiently long $w$.  Hence $v_m$ is the (length-$m$) kneading sequence for $a_k^{\epsilon_k}$, 
$a_k^{\epsilon_k}(v_m)$ is the kneading sequence for $a_{k-1}^{\epsilon_{k-1}}$, 
$a_{k-1}^{\epsilon_{k-1}}a_k^{\epsilon_k}(v_m)$ is the kneading sequence for $a_{k-2}^{\epsilon_{k-2}}$, and so on.  
Thus $g$ determines a path in the kneading graph of $A$, beginning at $v_m$, proceeding to $a_k^{\epsilon_k}(v_m)$, then to $a_{k-1}^{\epsilon_{k-1}}a_k^{\epsilon_k}(v_m)$, and so forth, ending at $a_1^{\epsilon_1} a_2^{\epsilon_2} \cdots a_{k-1}^{\epsilon_{k-1}} a_k^{\epsilon_k}(v_m)$.  There can be no back-tracking because of the minimality of \eqref{exp}.  The path from $v_m$ to $a_k^{\epsilon_k}(v_m)$ follows the arrow labeled $a_k$ if $\epsilon_k = 1$, and runs against the arrow labeled $a_k$ if $\epsilon_k = -1$.  
Assembling the labels along this path from right to left as indicated in the statement of the proposition then yields $g$.  

Conversely, any path in the kneading graph beginning at a vertex $v_m$ yields a word $g = a_1^{\epsilon_1}a_2^{\epsilon_2} \cdots a_k^{\epsilon_k}$.  Now $v_m$ is the (length-$m$) kneading sequence of $a_k^{\epsilon_k}$, and because $v_m$ consists of some number of full cycles of the periodic part of the infinite kneading sequence of $a_k^{\epsilon_k}$, we have $a_k^{\epsilon_k}|_{v_m} = a_k^{\epsilon_k}$.  Similarly, $a_{k-1}^{\epsilon_{k-1}}|_{a_k^{\epsilon_k}(v_m)} = a_{k-1}^{\epsilon_{k-1}}$.  Continuing in this manner we obtain $g|_{v_m} = g$, and hence $g \in \mathcal{N}_0$.

Now take $g \in \mathcal{N}_1$, so that there is some non-empty $v \in X^*$ with $g|_v = v$ and $g(v) = v$.  Then for any length-$n$ initial word $v_n$ of $vv\cdots$, we have $\ell(g|_{v_n}) = \ell(g)$ and $g(v_n) = v_n$.  Hence if $m$ is as above, we have $a_1^{\epsilon_1} a_2^{\epsilon_2} \cdots a_{k-1}^{\epsilon_{k-1}} a_k^{\epsilon_k}(v_m) = v_m$, and so the path in the kneading graph corresponding to $g$ is a circuit.  Conversely, any circuit yields $g$ with $g|_{v_m} = g$ and $g(v_m) = v_m$.  
\end{proof}

\begin{theorem} \label{lengthone}
Let $A$ be a kneading automaton.  Then every cycle in the kneading graph of $A$ has length one.
\end{theorem}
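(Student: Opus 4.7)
The plan is to argue by contradiction. Suppose the kneading graph of $A$ contains a cycle $\gamma\colon s_1 \to s_2 \to \cdots \to s_k \to s_1$ of length $k \geq 2$, with $s_1, \ldots, s_k$ distinct elements of $X^m$. Each step of $\gamma$ traverses some directed edge of the kneading graph in one direction or the other, and this directed edge comes from a unique state $a_i$ of $A$ lying in a cycle of the Moore diagram (for $i = 1, \ldots, k$).

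The first task is to show that the underlying states $a_1, \ldots, a_k$ are pairwise distinct. A given state $a$ in a cycle of the Moore diagram contributes at most two directed edges to the kneading graph: the forward edge $s_a \to a(s_a)$ labelled $a$ and its partner $a(s_a) \to s_a$ labelled $a^{-1}$. If two steps of $\gamma$ came from the same state $a$, both would connect the same pair of vertices $\{s_a, a(s_a)\}$. Using that $s_1, \ldots, s_k$ are distinct, this can only happen if the two steps are consecutive (modulo $k$) with $k \leq 2$, in which case the two steps traverse partner edges in opposite directions---the back-tracking case that the definition of a path forbids. Hence the $a_i$ are pairwise distinct states of $A$.

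Next, I invoke Lemma~\ref{neklem2}: the multi-set of all states of $A$ acting on $X^m$ is tree-like, so its cycle graph is a tree. The cycle graph $\mathcal{G}'$ of the sub-multi-set $\{a_1, \ldots, a_k\}$ acting on $X^m$ is obtained from this tree by deleting the black vertices associated with states outside $\{a_1, \ldots, a_k\}$ (together with their incident edges), so $\mathcal{G}'$ is a forest, in particular acyclic. On the other hand, each step $s_i \to s_{i+1}$ of $\gamma$ places $s_i$ and $s_{i+1}$ in a common cycle of $a_i$ on $X^m$; write $B_i$ for the black vertex of $\mathcal{G}'$ corresponding to that cycle. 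Since the $a_i$ are distinct, the $B_i$ are distinct; since the $s_i$ are distinct, the walk
\[ s_1, B_1, s_2, B_2, \ldots, s_k, B_k, s_1 \]
has $2k$ distinct vertices. It is therefore a cycle of length $2k \geq 4$ in $\mathcal{G}'$, contradicting acyclicity.

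The main obstacle is the first step: verifying the distinctness of the underlying states $a_i$, which requires a careful reading of the no-back-tracking condition in terms of how it handles partner directed edges. Once that is in hand, deriving the contradiction from Lemma~\ref{neklem2} is an essentially combinatorial exercise.
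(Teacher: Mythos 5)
Your proof is correct, and it reaches the contradiction by a somewhat different route than the paper. Both arguments rest on Lemma \ref{neklem2} (tree-likeness of the states acting on $X^m$), but where you construct an explicit alternating closed walk $s_1, B_1, s_2, B_2, \ldots, s_k, B_k, s_1$ of $2k \geq 4$ distinct vertices in the bipartite cycle graph and contradict acyclicity directly, the paper instead assembles the labels of the circuit as in Proposition \ref{kneadstab} into a product $g = a_1^{\epsilon_1}\cdots a_n^{\epsilon_n}$ fixing $s_0$, notes that replacing states by their inverses preserves tree-likeness, and invokes Lemma \ref{treefixed} to force each $a_i^{\epsilon_i}$ to fix $s_0$, contradicting $a_n^{\epsilon_n}(s_0) = s_1 \neq s_0$. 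Your route bypasses Lemma \ref{treefixed} entirely (though its proof is itself a path argument in the cycle graph, so the mechanism is ultimately the same) and avoids the need to pass to the multi-set with some states inverted, since $a_i$ and $a_i^{-1}$ have the same cycles. A further point in your favor: the distinctness of the underlying states $a_1, \ldots, a_k$, which the paper simply asserts, is genuinely needed (Lemma \ref{treefixed} is stated for a product over distinct indices), and you prove it; your resolution of the length-two case via the no-back-tracking convention --- treating the partner edges labeled $a$ and $a^{-1}$ as the same edge traversed in opposite senses --- is exactly the reading the paper itself relies on implicitly (e.g.\ in the remark that the Basilica kneading graph has no circuits), so your argument is no less rigorous than the original on this point, and somewhat more explicit.
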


\begin{proof}
Let the cycle in question consist of the vertices $s_0, s_1, \ldots, s_n$, with $s_n = s_0$ and $s_0, \ldots, s_{n-1}$ distinct.  Suppose that $n \geq 2$, so that $s_0 \neq s_1$.  Then assembling the labelings along this path as in Proposition \ref{kneadstab} gives an element $g = a_1^{\epsilon_1}a_2^{\epsilon_2} \cdots a_n^{\epsilon_n}$ with the $a_i$ distinct.  Note that $s_0$ is the (length-$m$) kneading sequence of $a_n^{\epsilon_n}$, and $a_i^{\epsilon_i} \cdots a_n^{\epsilon_n}(s_0) = s_{n-i+1}$ for each $i \geq 1$. In particular, $i = 1$ gives
\begin{equation} \label{acousticjava}
a_1^{\epsilon_1}a_2^{\epsilon_2} \cdots a_n^{\epsilon_n}(s_0) = s_0.
\end{equation}
By Lemma \ref{neklem2}, the set of permutations given by the states of $A$ acting on $X^m$ is tree-like, and hence the cycle diagram of its action is contractible.  If we replace some elements of $A$ by their inverses, the cycle diagram is only altered by changing the directions of some arrows; in particular it is still contractible.  Hence the multi-set of permutations given by $\{a_i^{\epsilon_i} : i = 1, \ldots, n\}$ acting on $X^m$ is a subset of a tree-like multi-set.  From Lemma \ref{treefixed} and \eqref{acousticjava} we then have $a_i^{\epsilon_i}(s_0) = s_0$ for all $i$, which contradicts the fact that $a_n^{\epsilon_n}(s_0) = s_1 \neq s_0$.  
\end{proof}

\begin{theorem} \label{comp}
Let $G = \langle A \rangle$ satisfy all the conditions of Theorem \ref{char}.  Then each component of the kneading graph of $A$ contains at most one cycle.  
\end{theorem}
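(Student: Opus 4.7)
The plan is to argue by contradiction via condition (4) of Theorem \ref{char}. Suppose that some connected component of the kneading graph of $A$ contains two distinct cycles. By Theorem \ref{lengthone}, both are self-loops, at vertices $v_1, v_2$ (possibly equal) labeled by elements $\alpha_1, \alpha_2 \in C$ with $\alpha_i(v_i) = v_i$ and $v_i$ equal to the length-$m$ kneading sequence of $\alpha_i$. The same computation used in the converse half of Proposition \ref{kneadstab} (namely, that $v_i$ consists of whole periods of the infinite kneading sequence of $\alpha_i$) also gives $\alpha_i|_{v_i} = \alpha_i$. Because the directed edges labeled $\alpha$ and $\alpha^{-1}$ always occur together as a single undirected self-loop, the distinctness of the two cycles forces the pairs $\{\alpha_1, \alpha_1^{-1}\}$ and $\{\alpha_2, \alpha_2^{-1}\}$ to be disjoint; after replacing each $\alpha_i$ by its inverse if necessary, I may assume $a_i := \alpha_i \in A$, so $a_1 \ne a_2$ are distinct non-trivial states of $A$ meeting the hypotheses of condition (4).

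The next step is to construct an element $h \in G$ with $h(v_1) = v_2$ and $h|_{v_1} = h$, in direct contradiction with condition (4). If $v_1 = v_2$, take $h = 1$: both required identities hold trivially. If instead $v_1 \ne v_2$, then since the two self-loops lie in the same connected component there is a (no-back-tracking) path in the kneading graph from $v_1$ to $v_2$, and by the converse direction of Proposition \ref{kneadstab} assembling its labels from right to left (reading $a^{-1}$ whenever an edge is traversed against its direction) produces an element $h \in G$ with $h|_{v_1} = h$ and $h(v_1) = v_2$. Either way, condition (4) of Theorem \ref{char} is violated, and the theorem follows.

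The main obstacle I anticipate is a conceptual bookkeeping one about inverses. In the directed kneading graph each pair $\{\alpha, \alpha^{-1}\}$ contributes two anti-parallel edges, and whenever $\alpha(v) = v$ each contributes a directed self-loop at $v$; for Theorem \ref{comp} to have content, cycles must be counted modulo this $\{\alpha, \alpha^{-1}\}$ identification, and I must verify that two genuinely distinct cycles in this sense do arise from two distinct non-trivial states of $A$, so that condition (4) can legitimately be invoked. The coincident-vertex case is saved precisely because condition (4) forbids even the choice $h = 1$; without that piece of the hypothesis, two self-loops sharing a vertex would not be ruled out by the same argument.
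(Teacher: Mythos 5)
Your proof is correct and takes essentially the same route as the paper: reduce both cycles to one-cycles via Theorem \ref{lengthone}, note that the labeling elements $a_i$ satisfy $a_i(v_i)=v_i$ and $a_i|_{v_i}=a_i$, assemble the labels of a path joining the two loop vertices as in the converse part of Proposition \ref{kneadstab} to obtain $h$ with $h(v_1)=v_2$ and $h|_{v_1}=h$, and contradict condition (4) of Theorem \ref{char}. Your explicit handling of the inverse-label bookkeeping and of the coincident-vertex case (where $h=1$ already violates condition (4)) only spells out points the paper leaves implicit.
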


\begin{proof}
By Proposition \ref{lengthone}, each cycle of the kneading graph of $A$ has length one.  Suppose that there are two such one-cycles at vertices $s_1$ and $s_2$ lying in a connected component of the kneading graph of $A$, and let $a_1$ and $a_2$ be the elements labeling them.  Then $a(s_i) = s_i$ for $i = 1,2$.  Moreover, $s_1$ and $s_2$ have length a multiple of the period of the kneading sequences of $a_1$ and $a_2$, and so $a_i|_{s_i} = a_i$ for $i = 1, 2$.  Now $s_1$ and $s_2$ are in the same component of the kneading graph, and so there is a path connecting them.  Assembling the labelings along this path as in Proposition \ref{kneadstab} gives $h \in G$ with $h(s_1) = s_2$ and $h|_{s_1} = h$, (see the construction in the converse portion of the proof of Proposition \ref{kneadstab}).   
But this contradicts condition (4) of Theorem \ref{char}.  
\end{proof}

\begin{theorem} \label{n1}
Let $G = \langle A \rangle$ satisfy all the conditions of Theorem \ref{char}.  Then every element of $\mathcal{N}_1$ is conjugate to a power of an element of $A \cap \mathcal{N}_1$.  
\end{theorem}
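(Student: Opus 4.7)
By Proposition \ref{kneadstab}, every element of $\mathcal{N}_1$ arises as the label of a circuit in the kneading graph of $A$, so my plan is to classify these circuits using Theorems \ref{lengthone} and \ref{comp} and then read off the resulting element.

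Combining those two theorems, each connected component of the kneading graph is either a tree, or a tree with a single loop attached at some vertex $s_0$. Let $a \in A$ be the label of such a loop: the loop is itself a length-one circuit yielding $a$, so $a \in A \cap \mathcal{N}_1$. Since trees have no nontrivial circuits, any circuit giving an element of $\mathcal{N}_1$ lies in some component containing a loop.

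The heart of the argument is a structural claim: in a component $C$ consisting of a tree $T$ with a loop at $s_0$, every non-back-tracking closed walk based at a vertex $v$ must consist of following the unique tree-path $p$ from $v$ to $s_0$, traversing the loop some nonzero integer number of times $k$ in a single direction, and then retracing $p$ in reverse back to $v$. The reasoning is that in a finite tree, a non-back-tracking walk can only travel away from its starting vertex---retreat would require immediate back-tracking---so the only way to close up is to reach $s_0$ and use the loop to ``reset'' the previous-edge constraint, enabling return along the incoming path. Any attempted detour into a sibling branch at an intermediate vertex, or a second excursion into the tree after the loop, runs into a dead-end leaf. This is a short case analysis, and is the main technical content beyond Proposition \ref{kneadstab}.

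Once this form is in hand, reading the labels of the circuit right-to-left per Proposition \ref{kneadstab}: if the signed labels of $p$ in order of traversal are $b_1^{\epsilon_1}, \ldots, b_r^{\epsilon_r}$, then
\[
g = b_1^{-\epsilon_1} \cdots b_r^{-\epsilon_r} \cdot a^{\pm k} \cdot b_r^{\epsilon_r} \cdots b_1^{\epsilon_1} = h^{-1} a^{\pm k} h,
\]
where $h = b_r^{\epsilon_r} \cdots b_1^{\epsilon_1} \in G$. Hence $g$ is conjugate to $a^{\pm k}$, a power of $a \in A \cap \mathcal{N}_1$, as required. The only nontrivial step is the structural claim above; the conjugacy then follows by direct bookkeeping with the right-to-left labeling convention of Proposition \ref{kneadstab}.
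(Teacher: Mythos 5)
Your proof is correct and follows essentially the same route as the paper's: Proposition \ref{kneadstab} to identify $\mathcal{N}_1$ with circuit labels, Theorems \ref{lengthone} and \ref{comp} to reduce each component to a tree with at most one loop, the observation that a non-back-tracking circuit must run out to the loop vertex, wind around the loop a nonzero number of times in one direction, and retrace its path, and then the conjugation $g = h^{-1}a^{\pm k}h$. Your version merely spells out the right-to-left labeling bookkeeping a bit more explicitly than the paper does.
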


\begin{proof}
By Proposition \ref{kneadstab}, elements of $\mathcal{N}_1$ correspond to circuits in the kneading graph of $A$, which by definition have no back-tracking.  By Theorems \ref{lengthone} and \ref{comp}, each such cycle belongs to a component $C$ having at most a single cycle, which must have length one.  Thus $C$ is either a tree, or becomes a tree when we delete the edge forming the one-cycle.  Every non-trivial circuit in a tree involves back-tracking, and so if $\gamma$ is a non-trivial circuit in $C$, then $C$ must contain a one-cycle at a vertex $s$, labeled by $a \in A$.  Clearly $a \in \mathcal{N}_1$.  Let $s_0$ be the starting point of $\gamma$, and note that if $\gamma$ does not contain the one-cycle at $s$, then it lies entirely within a tree, which is impossible.  Thus $\gamma$ must proceed along the unique path to $s$, go around the one-cycle at $s$ a non-zero number of times in the same direction each time, and return to $s_0$ the same way it came.  If $g$ is the element labeling the path from $s_0$ to $s$ (assembled as in Proposition \ref{kneadstab}), then $g^{-1}$ is the element labeling the reverse path.  Hence the element labeling $\gamma$ is conjugate to a power of $a$.  
\end{proof}

\begin{corollary} \label{n1cor}
Let $G = \langle A \rangle$ satisfy all the conditions of Theorem \ref{char}, and suppose that every element of $A$ that is in a cycle of the reduced Moore diagram either fixes no ends of $X^*$ or fixes infinitely many.  Then $\mathcal{F}(G) = 0$.   
\end{corollary}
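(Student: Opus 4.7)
The plan is to combine Theorem \ref{crystal} with Theorem \ref{n1}. Both hypotheses of Theorem \ref{crystal} are already in hand: since $G = \langle A \rangle$ satisfies the conditions of Theorem \ref{char}, it is isomorphic as a subgroup of $\Aut(X^*)$ to a standard action of the iterated monodromy group of a post-critically finite polynomial, and such standard actions are contracting (as cited just before Theorem \ref{crystal}). Moreover, since $A$ is in particular a kneading automaton, Lemma \ref{trans} supplies a spherically transitive element, namely any product of the states of $A$. Thus it suffices to prove that every element of $\mathcal{N}_1$ fixes infinitely many elements of $X^\omega$.

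By Theorem \ref{n1}, every $g \in \mathcal{N}_1$ is $G$-conjugate to $a^k$ for some $a \in A \cap \mathcal{N}_1$ and some $k \in \Z$. Conjugation by $h \in \Aut(X^*)$ induces a bijection of $X^\omega$ carrying the fixed ends of $h^{-1} a^k h$ onto the fixed ends of $a^k$; and every end fixed by $a$ is a fortiori fixed by $a^k$. Hence it suffices to show that each $a \in A \cap \mathcal{N}_1$ fixes infinitely many ends of $X^*$.

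Fix such an $a$, which I may assume non-trivial (the identity case is immediate). By definition of $\mathcal{N}_1$, there is a non-empty $v \in X^*$ with $a|_v = a$ and $a(v) = v$. The first equality forces $a$ to lie in a cycle of the reduced Moore diagram of $A$: condition (1) of Theorem \ref{char} implies that every non-trivial state of $A$ outside a cycle is finitary (see the opening of the proof of Proposition \ref{kneadstab}), so if $a$ were outside a cycle then $a|_{v^n}$ would be trivial for sufficiently large $n$, contradicting $a|_{v^n} = a \neq 1$. The second equality gives the fixed end $vvv\cdots \in X^\omega$, so $a$ fixes at least one end. The standing hypothesis of the corollary then forces $a$ to fix infinitely many ends.

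Assembling the steps, every element of $\mathcal{N}_1$ fixes infinitely many ends of $X^*$, and Theorem \ref{crystal} yields $\mathcal{F}(G) = 0$. The argument is essentially a reassembly of the previous results, and I do not anticipate any serious technical obstacle; the one point that needs attention is verifying that the $G$-conjugacy supplied by Theorem \ref{n1} transfers fixed-end information, which it does because conjugation on $\Aut(X^*)$ induces a permutation of $X^\omega$ that preserves the fixed-point relation.
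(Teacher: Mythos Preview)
Your proposal is correct and follows essentially the same route as the paper: verify the hypotheses of Theorem~\ref{crystal}, then use Theorem~\ref{n1} to reduce the $\mathcal{N}_1$ condition to the behavior of generators in $A \cap \mathcal{N}_1$, which lie in cycles of the reduced Moore diagram and fix at least one end, hence infinitely many by hypothesis. The one substantive difference is how you obtain contractingness: you invoke the hard direction of Theorem~\ref{char} to realize $G$ as a standard action of an iterated monodromy group and then cite the general fact that such actions are contracting, whereas the paper argues directly that condition~(1) forces each state of $A$ to be a bounded automorphism and then applies Theorem~\ref{bndedthm}. The paper's route is more self-contained and in keeping with its stated goal of using only the easy direction of Theorem~\ref{char}; your route is shorter but leans on machinery the paper deliberately avoids.
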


\begin{proof}
Condition (1) of Theorem \ref{char} implies that each $a \in A$ is bounded (see Definition \ref{bounded}).  Indeed, for each $a \in A$, the restrictions of $a$ to words of length $n$ consist of the endpoints of all paths of length $n$ in the reduced Moore diagram (following the arrows) starting at $a$.  Because every non-trivial state has a unique incoming arrow, there can be at most one such path ending in each state.  Hence $\#\{v \in X^n : g|_v \neq 1\}$ is bounded by $\#A$.  By Theorem \ref{bndedthm}, $G$ is therefore contracting.  By Lemma \ref{trans}, $G$ contains a spherically transitive element.  We may thus apply Theorem \ref{crystal}, and so to show $\mathcal{F}(G) = 0$ it is enough to show that every $g \in \mathcal{N}_1$ fixes infinitely many ends of $X^*$.   By Theorem \ref{n1}, each $g \in \mathcal{N}_1$ is conjugate to $a^n$ for some $a \in A \cap \mathcal{N}_1$ and $n \in \Z$.  Because $a \in \mathcal{N}_1$, $a$ lies in a cycle of the reduced Moore diagram of $A$ and also fixes at least one end of $X^*$.  Thus by hypothesis $a$ fixes infinitely many ends.  But $a^n$ fixes at least as many elements of each $X^n$ as $a$ does, and hence $a^n$ fixes infinitely many ends of $X^*$.  
\end{proof}


\section{The final steps} \label{sec: last}

Recall that $g \in \Aut(X^*)$ is finitary if all its restrictions at sufficiently long words are the identity.  If $g$ is finitary, then it either fixes no ends of $X^*$ or fixes infinitely many such ends.  Indeed, if $w$ is an end fixed by $g$ and $w_n$ is the length-$n$ initial word of $w$, then $g(w_n) = w_n$ for all $n$ and we may take $n$ large enough so that $g|_{w_n} = 1$.  Thus $g$ fixes all ends with initial word $w_n$, which is an infinite set.  

Throughout this section, when we write $\Fix(g)$ for $g \in \Aut(X^*)$, we mean the set of fixed points of the action of $g$ on $X$ (not on the ends of $X^*$).  

\begin{lemma} \label{finitary}
Let $A$ be a kneading automaton, and let $a, b \in A$ be finitary with $a \neq b$.   Then at least one of $a, b$ fixes infinitely many ends of $X^*$.   
\end{lemma}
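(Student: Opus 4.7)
The plan is to rule out the possibility that neither $a$ nor $b$ fixes an end of $X^*$, exploiting the tree-like multi-set structure provided by the kneading hypothesis. Combined with the preliminary observation made at the start of this section (a finitary automorphism fixes either no ends or infinitely many), this suffices.

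First I would choose $n$ large enough that $a|_v = 1 = b|_v$ for every $v \in X^n$, which exists because $a$ and $b$ are finitary. With this choice, for any $g \in \{a,b\}$ the following are equivalent: $g$ fixes infinitely many ends of $X^*$; $g$ fixes at least one end of $X^*$; $g$ fixes at least one vertex $v \in X^n$. Indeed, if $g(v) = v$ for some $v \in X^n$, then $g(vw) = g(v)\, g|_v(w) = vw$ for every $w \in X^*$, so all of the infinitely many ends extending $v$ are fixed; conversely, a fixed end must have its length-$n$ initial word fixed. So it suffices to show that at least one of $a,b$ fixes a vertex at level $n$.

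Next I would invoke Lemma \ref{neklem2} to conclude that the multi-set of permutations of $X^n$ determined by the states of $A$ is tree-like, and then apply Lemma \ref{treeperms}(1) to the indices $a$ and $b$. Writing $\pi_a$ and $\pi_b$ for the actions of $a$ and $b$ on $X^n$, the lemma yields $\#\Fix(\pi_a) + \#\Fix(\pi_b) \geq 2$, so one of $\pi_a, \pi_b$ has a fixed point on $X^n$. By the equivalence in the previous paragraph, the corresponding element of $\{a,b\}$ fixes infinitely many ends of $X^*$.

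There is no serious obstacle here: the proof is a three-ingredient sandwich using the finitary dichotomy, Nekrashevych's tree-likeness at each level (Lemma \ref{neklem2}), and the quantitative bound on fixed points in a tree-like multi-set (Lemma \ref{treeperms}(1)). The only subtlety worth noting is that Lemma \ref{treeperms} is applied to the index set $A$ regarded as a multi-set, with distinct indices $a \neq b$; this is legitimate even if the induced permutations $\pi_a$ and $\pi_b$ on $X^n$ happen to coincide, since the hypothesis $i \neq j$ in Lemma \ref{treeperms} refers to indices rather than to the permutations themselves.
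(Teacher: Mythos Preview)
Your proof is correct, and it takes a genuinely different route from the paper's. The paper argues level by level: it applies Lemma~\ref{treeperms}(1) to the action of $a,b$ on $X$ to find fixed letters $x_0,y_0$, restricts to obtain new states $a_1,b_1 \in A$, applies Lemma~\ref{treeperms}(1) again on $X$, and iterates, building a word $w_j$ with $a(w_j)=w_j$ and $a|_{w_j}=1$ once finitariness kicks in. Your approach short-circuits this induction: you first pass to a level $n$ at which both $a$ and $b$ have trivial restrictions, invoke Lemma~\ref{neklem2} to obtain tree-likeness of the action of $A$ on $X^n$, and then apply Lemma~\ref{treeperms}(1) \emph{once} on $X^n$. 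This is cleaner and sidesteps the bookkeeping in the paper's iterative construction (in particular the implicit need to keep the restricted pair distinct at each stage). The trade-off is that you rely on the somewhat deeper Lemma~\ref{neklem2}, whereas the paper's argument uses only Lemma~\ref{treeperms}(1) at the first level together with the automaton's closure under restriction.
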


\begin{proof}
By 
part (1) of Lemma \ref{treeperms}, $\#\Fix(a) + \#\Fix(b) \geq 2$, and hence there exist elements $x_0, y_0 \in X$ that are fixed by either $a$ or $b$.  Renaming if necessary, assume that $a$ fixes $x_0$, and let $a_1 = a|_{x_0}$.  Let $b_1 = a|_{y_0}$ if $a(y_0) = y_0$ and $b_1 = b|_{y_0}$ if $b(y_0) = y_0$.  Applying part (1) of Lemma \ref{treeperms} again, there exist 
$x_1, y_1 \in X$ that are fixed by either $a_1$ or $b_1$.  Renaming again if necessary, assume $a_1(x_1) = x_1$,  and let $a_2 = a_1|_{x_1}$.  Let $b_2 = a_1|_{y_1}$ if $a_1(y_1) = y_1$ and $b_2 = b_1|_{y_1}$ if $b_1(y_1) = y_1$.  Proceeding in this manner yields  
a sequence $a_1, a_2, \ldots$ of elements of $A$ and words $w_n = x_0x_1\cdots x_n$ such that $a(w_n) = w_n$ and $a|_{w_n} = a_{n+1}$.  

Because $a$ is finitary, there exists $j > 0$ such that all $a_n$ are trivial for $n \geq j$.  Hence $a$ fixes the word $w_j = x_0x_1 \cdots x_j$ and $a|_{w_j} = 1$, implying that $a$ fixes all ends of $X^*$ with initial word $w_j$, which is an infinite set.
\end{proof}


\begin{theorem} \label{nolongcycle}
Let $G = \langle A \rangle$ satisfy the conditions of Theorem \ref{char}, and suppose $A$ acts on a set $X$ with $\#X = d$.  Let $C$ be a cycle of the reduced Moore diagram of $A$.  If $C$ contains at least two elements and one of them fixes a non-empty, finite set of ends of $X^*$, then $d$ is odd, $C$ is the only cycle, and up to conjugation in $\Aut(X^*)$ we have $A = \{1, a,b\}$ with 
\begin{equation} \label{howitis}
a = \sigma(b, 1, 1,\ldots, 1) \qquad b = \tau(1, 1, \ldots, 1, a),
\end{equation}
where $\sigma$ and $\tau$ are products of disjoint transpositions, $\sigma$ fixes only $0 \in X$, and $\tau$ fixes only $d-1 \in X$.  In particular, $\langle A \rangle$ is infinite dihedral.   
\end{theorem}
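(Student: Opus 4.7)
The plan is to wring structural information out of the hypothesis that some $a_0 \in C$ fixes a non-empty, finite set of ends, using Lemma~\ref{finitary}, Lemma~\ref{treeperms}, and condition (1) of Theorem~\ref{char}. Write $C = (a_0, a_1, \ldots, a_{k-1})$ with $a_i|_{x_i} = a_{i+1 \bmod k}$, so that $x_0 x_1 \cdots$ is the infinite kneading sequence of $a_0$.

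First I analyse the fixed ends of $a_0$. If $a_0$ fixes an end $w$ and any restriction $a_0|_{w_n}$ were finitary, then the discussion opening Section~\ref{sec: last} (together with Lemma~\ref{finitary}) would force $a_0$ to fix infinitely many ends starting with $w_n$. So every $a_0|_{w_n}$ lies in $C$, which forces $w$ to be the kneading sequence and $a_n(x_n) = x_n$ for every $n$. An analogous argument shows that any fixed point $y \neq x_n$ of $a_n$ must produce a non-trivial finitary restriction $a_n|_y$ that fixes no ends (``bad''); Lemma~\ref{finitary} and condition (1) of Theorem~\ref{char} then ensure there is at most one such pair $(n,y)$, whence $|\Fix(a_n)| \leq 2$, with equality for at most one $n$.

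Next I invoke Lemma~\ref{treeperms}(2): for any distinct $a_i, a_j \in C$ one has $|\Fix(a_i)| + |\Fix(a_j)| \leq 3$, so every state of $A$ outside $C$ acts as the identity on $X$. I then rule out any non-trivial $c \in A$ that acts as the identity on $X$ by descent: by condition (1), no restriction $c|_x$ can equal a cycle element $a_m$ (else tracing back unique incoming arrows would force $c$ to be its predecessor, already in $C$), so every $c|_x$ is either trivial or again acts as the identity on $X$; iterating shows $c$ acts trivially on every $X^n$, hence $c = 1$. This simultaneously eliminates the hypothetical bad finitary, any other non-cycle states, and the states of any second cycle (which would likewise be identity on $X$), leaving $A = C \cup \{1\}$ and $|\Fix(a_n)| = 1$ for each $n$. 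A further application of Lemma~\ref{treeperms}(2) now forces $k = 2$: if $k \geq 3$ the cycle state $a_2$ would be identity on $X$ while also fixing only $x_2$, which is absurd.

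With $k = 2$ and $|\Fix(a_0)| = |\Fix(a_1)| = 1$, Lemma~\ref{treecyc} applied to $\{a_0, a_1\}$ acting on $X$ forces each of $a_0, a_1$ to be a disjoint product of $(d-1)/2$ transpositions, and hence $d$ must be odd. A conjugation in $\Aut(X^*)$ relabelling $X$ so that the unique fixed point of $a_0$ is $0$ and that of $a_1$ is $d-1$ then places the wreath recursions in the claimed form.

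The step I expect to be the main obstacle is the final assertion that $\langle A \rangle$ is infinite dihedral. Since $a^2 = (b^2, 1, \ldots, 1)$ and $b^2 = (1, \ldots, 1, a^2)$ are both non-trivial, neither $a$ nor $b$ is an involution, so the defining involutions of $D_\infty$ are not visible at the first level. My plan is to exhibit the presentation indirectly, either by producing an involution in $\langle A \rangle$ that conjugates the infinite-order element $ab$ to its inverse, or by recognising $\langle A \rangle$ as the iterated monodromy group of a polynomial conjugate to $-T_d$, for which the description as $D_\infty$ is standard and consistent with Proposition~\ref{exceptprop}.
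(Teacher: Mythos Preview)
Your route to $A = \{1, a, b\}$, $k = 2$, and $\lvert\Fix(a)\rvert = \lvert\Fix(b)\rvert = 1$ is sound and close in spirit to the paper's, though organised somewhat differently: where the paper rules out $\#C \geq 3$ via the inequality $\#\Fix(c_1) + \#\Fix(c_2) + \#\Fix(c_3) \geq 5$, you bound the number of ``bad'' pairs $(n,y)$ directly and then invoke Lemma~\ref{treeperms}(2). Both arguments work. Two small points you skate over: you should record that $x_0 \neq x_1$ (the paper obtains this from condition~(4) via Theorem~\ref{comp}; alternatively, if $x_0 = x_1$ then that vertex is isolated in the cycle graph, contradicting tree-likeness), and your appeal to Lemma~\ref{treecyc} needs the observation that the reduced cycle diagram of $\{a_0,a_1\}$ has \emph{exactly} $d-1$ two-cells, which follows from an edge count in the cycle graph once $\lvert\Fix(a_0)\rvert + \lvert\Fix(a_1)\rvert = 2$.

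The genuine error is in your last paragraph. You assert that $a^2 = (b^2, 1, \ldots, 1)$ and $b^2 = (1, \ldots, 1, a^2)$ are non-trivial, and therefore treat the infinite-dihedral claim as an obstacle to be handled indirectly. This is a miscalculation: since $\sigma$ and $\tau$ are products of disjoint transpositions, $\sigma^2 = \tau^2 = 1$, so $a^2$ acts trivially on $X$ and its only possibly non-trivial first-level restriction is $a^2|_0 = b^2$; symmetrically for $b^2$. An immediate induction on the level (exactly as in Example~\ref{counter1}) shows that $a^2$ and $b^2$ act trivially on every $X^n$, whence $a^2 = b^2 = 1$ in $\Aut(X^*)$. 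With $a$ and $b$ honest involutions and $ab$ spherically transitive by Lemma~\ref{trans}, conjugation of $ab$ by $a$ gives $ba = (ab)^{-1}$, so $\langle A \rangle$ is infinite dihedral directly. There is no obstacle.
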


\begin{proof}
Let $C = \{c_1, c_2, \ldots, c_n\}$, so that there is an arrow in the reduced Moore diagram from $c_i$ to $c_{i+1}$ for $i = 1, \ldots, n-1$ and also from $c_n$ to $c_1$.  Let $v_1 = x_1x_2 \cdots x_n$ be the length-$n$ kneading sequence of $c_1$, implying that $v_i = x_ix_{i+1} \cdots x_nx_1 \cdots x_{i-1}$ is the kneading sequence of $x_i$.  Then $c_i|_{v_i} = c_i$ and from \eqref{lifts},
$$c_i(v_i) = c_i(x_i)c_{i+1}(x_{i+1}) \cdots c_n(x_n)c_1(x_1) \cdots c_{i-1}(x_{i-1}).$$
Hence $c_i$ fixes $v_i$ if and only if each $c_j$ fixes $x_j$.  

Assume that $C$ contains at least two elements and one of them fixes a non-empty, finite set of ends of $X^*$.  Suppose first that $c_i$ does not fix $v_i$ for some $i$.
If $c_i$ fixes an end $w$ of $X^*$, then $w$ cannot be the (infinite) kneading sequence of $c_i$.  Letting $w_j$ be the length-$j$ initial word of $w$, we can thus take $j$ large enough so that $c_i|_{w_j}$ is finitary.  Let $b = c_i|_{w_j}$, and note that for $k > j$, $c_i|_{w_k}$ is a restriction of $b$ at a word of length $k-j$.  Because $b$ is finitary, we may take $k$ large enough so that $c_i|_{w_k} = 1$.  But $c_i$ fixes $w$, and thus $c_i(w_k) = w_k$, ensuring that $c_i$ fixes all ends of $X^*$ with initial word $w_k$.  
Hence all $c_i$ either fix no ends of $X^*$ or infinitely many, a contradiction.

Suppose now that $c_i$ fixes $v_i$ for some $i$ (equivalently, all $i$).  Then for each $j \neq i$ there is some word $w_j$ with $c_i(w_j) = w_j$ and $c_i|_{w_j} = c_j$.  Thus if any $c_j$ fixes infinitely many ends of $X^*$, then the same conclusion holds for all the $c_j$.  If $n \geq 3$, then we claim
\begin{equation} \label{lcdtrio}
\#\Fix(c_1) + \#\Fix(c_2) + \#\Fix(c_3) \geq 5.
\end{equation}
To see why, note that $\#\Fix(c_i) \geq 1$ for all $i$ by assumption, and if $\#\Fix(c_i) + \#\Fix(c_j) \leq 3$ for some $i \neq j$, then part (2) of Lemma \ref{treeperms} gives $\#\Fix(c_k) = d$ for the remaining $k$.  If $d = 2$, then we must have $\#\Fix(c_i) = 2$ for all $i$, and \eqref{lcdtrio} holds.  If $d \geq 3$ and $\#\Fix(c_i) = \#\Fix(c_j) = 1$, then $\#\Fix(c_k) = d$ and again \eqref{lcdtrio} holds.  

Thus by \eqref{lcdtrio} there are $y_1 \neq y_2 \in X \setminus \{x_1, x_2, x_3\}$ with $c_i(y_1) = y_1$ and $c_j(y_2) = y_2$ (here $i$ is allowed to equal $j$).  Now $c_i|_{y_1}$ and $c_i|_{y_1}$ are distinct and finitary, and by Lemma \ref{finitary} at least one of them fixes infinitely many ends of $X^*$.  Thus at least one of the $c_i$ fixes infinitely many ends of $X^*$, and hence all do.  This gives a contradiction.

We have therefore shown that $\#C = 2$, and we write $C = \{a, b\}$.  Let $x_1x_2$ be the kneading sequence of $a$, implying that $x_2x_1$ is the kneading sequence of $b$, and recall that both $a$ and $b$ must fix their kneading sequences.  If $\#\Fix(a) + \#\Fix(b) \geq 4$, then there exist
$y_1, y_2 \in X \setminus \{x_1, x_2\}$ fixed by either $a$ or $b$.  As in the previous paragraph, we conclude that both $a$ and $b$ fix infinitely many ends of $X^*$, a contradiction.  Hence $\#\Fix(a) + \#\Fix(b) \leq 3$, and from part (2) of Lemma \ref{treeperms} we have that every $a' \in A \setminus \{a, b\}$ must act as the identity on $X$.  If $a'$ is in the component of the reduced Moore diagram containing $C$, then it cannot be part of $C$, and neither can any of its restrictions.  Thus $a'$ acts trivially on $X^*$.  If $a'$ is in a component of the reduced Moore diagram of $A$ that does not contain $C$, then every element of this component must act trivially on $X$.  Since components are closed under restriction, it follows that every element of this component is trivial.  Thus 
$A = \{1, a, b\}$.  

Now if $\#\Fix(a) + \#\Fix(b) = 3$, then there is some $y \in X \setminus \{x_1, x_2\}$ that we may assume without loss is fixed by $a$.  Then $a|_y$ is not in the cycle $\{a, b\}$, and so $a|_y = 1$, showing that $a$ fixes infinitely many ends of $X^*$.  Thus $b$ must as well, which is a contradiction.  Therefore 
$\#\Fix(a) = \#\Fix(b) = 1$, and we must have $\Fix(a) \cap \Fix(b) = \emptyset$.  Otherwise $a$ and $b$ have the same kneading sequence, and thus give two one-cycles in the same component of the kneading graph of $A$, violating Theorem \ref{comp}.  Hence conjugating by an appropriate $\gamma(1, 1, \ldots, 1) \in \Aut(X^*)$ we may assume that 
$\Fix(a) = \{0\}$ and $\Fix(b) = \{d-1\}$, giving the forms in \eqref{howitis}.  Moreover, because $\#\Fix(a) + \#\Fix(b) = 2$ we must have equality in \eqref{euler}, which implies that the reduced cycle diagram of $\{\sigma, \tau\}$ has $d-1$ 2-cells.  By Lemma \ref{treecyc} it follows that $\sigma$ and $\tau$ are products of disjoint transpositions, and hence $d$ must be odd.  

Now $a^2 = (a^2, 1, 1, \ldots, 1)$, and so $a^2 = 1$, and similarly $b^2 = 1$.  By Lemma \ref{trans} or Proposition \ref{sphertrans} we have that $ab$ has infinite order, and conjugating $ab$ by $a$ gives $ba$, which is $(ab)^{-1}$.  Hence $\langle A \rangle$ is infinite dihedral.  
\end{proof}

\begin{theorem} \label{titus}
Let $G = \langle A \rangle$ satisfy the conditions of Theorem \ref{char}, where $A$ acts on a set $X$ with $\#X = d$.  Let $C = \{c\}$ be a 1-cycle in the reduced Moore diagram of $A$, and suppose that $c$ fixes at least two points of $X$.  If $c$ fixes a non-empty, finite set of ends of $X^*$, then $d$ is even, $C$ is the only cycle, and up to conjugation in $\Aut(X^*)$ we have $A = \{c, a, 1\}$ with
\begin{equation} \label{howitis2}
c = \sigma(c, a, 1,1, \ldots, 1) \qquad a = \tau(1, 1, \ldots, 1),
\end{equation}
where $\sigma$ and $\tau$ are products of disjoint transpositions, $\Fix(\sigma) = \{0, 1\}$, and 
$\Fix(\tau) = \emptyset$.  In particular, $\langle A \rangle$ is infinite dihedral.   
\end{theorem}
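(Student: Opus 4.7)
The plan is to parallel the proof of Theorem \ref{nolongcycle}, adapted to the simpler setting of a self-loop. Let $x \in X$ be the unique letter such that $c|_x = c$, whose existence and uniqueness are guaranteed by condition (1) of Theorem \ref{char}. First I want to show $c(x) = x$: if not, any end $w$ of $X^*$ fixed by $c$ must have first letter $w_1 \neq x$, so $c|_{w_1}$ is not in the cycle $C$ and therefore lies in the tree attached to $C$, making it finitary (or trivial). A finitary element fixing any end fixes infinitely many (as observed at the start of Section \ref{sec: last}), which forces $c$ to fix infinitely many ends and contradicts the hypothesis that $c$'s fixed-end set is finite and nonempty.

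With $c(x) = x$ in hand, I analyze the recursion $F_c = \bigsqcup_{y \in \Fix(c|_X)} y \cdot F_{c|_y}$, where $F_g$ denotes the set of ends fixed by $g$. Isolating the $y = x$ term (which contributes $x \cdot F_c$) yields $\sum_{y \in \Fix(c) \setminus \{x\}} |F_{c|_y}| = 0$, so every other fixed point $y$ of $c$ has $|F_{c|_y}| = 0$. Such a $c|_y$ cannot be trivial (that would contribute $X^\omega$ fixed ends), so it must be a non-trivial finitary element of $A$ fixing no ends; if two such distinct restrictions existed, they would be distinct finitary elements with empty fixed-end sets, violating Lemma \ref{finitary}. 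Hence $\#\Fix(c) = 2$, and after relabeling I take $\Fix(c) = \{0, 1\}$ with $x = 0$ and set $a := c|_1$, which is then non-trivial, finitary, and fixes no ends.

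The heart of the argument is showing $A = \{c, a, 1\}$ and that $C$ is the only cycle. First, $a$ has no fixed point on $X$: a fixed point $z$ of $a$ would make $a|_z$ a third distinct non-trivial finitary element of $A$ with empty fixed-end set (distinct from $a$ by the unique-incoming-arrow axiom and distinct from $c$ since $c$'s unique incoming arrow is the self-loop), again contradicting Lemma \ref{finitary}. So $\#\Fix(\tau) = 0$, and Lemma \ref{treeperms}(2) applied to $(c,a)$ with $\#\Fix(c) + \#\Fix(a) = 2$ forces every other state of $A$ to act as the identity on $X$. For any hypothetical non-trivial $e \in A \setminus \{c, a\}$, both $e$ and all non-trivial states reachable from $e$ by restriction act as the identity on $X$ (using that components of the reduced Moore diagram are closed under restriction). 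If $e$ is finitary, a short induction on the depth of the finitary tower forces $e = 1$; if $e$ lies in a distinct cycle, an induction on word length (again using closedness of the component) shows $e$ fixes every $X^n$ and hence $e = 1$. Both cases contradict non-triviality, so $A = \{c, a, 1\}$ and $C$ is the unique cycle, yielding $c = \sigma(c, a, 1, \ldots, 1)$ and $a = \tau(1, \ldots, 1)$.

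To finish, I apply Euler's formula to the reduced cycle diagram of $\{\sigma, \tau\}$ exactly as in the proof of Lemma \ref{treeperms}: with $V = d$, $E = (d-2) + d = 2d - 2$, and tree-likeness forcing connectedness, one computes $F = d$ and hence $d-1$ 2-cells, which by the equality case of Lemma \ref{treecyc} means $\sigma$ and $\tau$ are products of disjoint transpositions; since $\Fix(\tau) = \emptyset$, this forces $d$ even. A direct wreath computation gives $a^2 = 1$ and $c^2 = (c^2, 1, 1, \ldots, 1)$, and a straightforward induction on word length shows that the latter fixes every $X^n$, so $c^2 = 1$; Lemma \ref{trans} then gives that $ca$ is spherically transitive and hence of infinite order, so $\langle A \rangle$ is infinite dihedral. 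The main obstacle I anticipate is the structural step of the third paragraph: the self-loop at $c$ makes the fixed-end recursion self-referential, so one must carefully separate the $y = x$ contribution from those at the other fixed points before Lemma \ref{finitary} applies, and then handle the two qualitatively different possibilities for a stray non-trivial state -- sitting in a tree attached to $C$ versus sitting in a separate cycle -- via a uniform induction on word length.
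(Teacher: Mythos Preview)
Your proof is correct and follows essentially the same route as the paper's: reduce $\#\Fix(c)$ to $2$ via Lemma~\ref{finitary}, pin down $c(x)=x$, bound $\#\Fix(a)$, invoke Lemma~\ref{treeperms}(2) to kill all other states, and finish with the Euler/Lemma~\ref{treecyc} count and the dihedral computation. The only differences are cosmetic --- you establish $c(x)=x$ before bounding $\#\Fix(c)$ (the paper reverses this), and you prove $\#\Fix(a)=0$ in one shot by applying Lemma~\ref{finitary} to the pair $(a,a|_z)$, whereas the paper first gets $\#\Fix(a)\le 1$ via $(a|_{z_1},a|_{z_2})$ and then eliminates the case $\#\Fix(a)=1$ separately. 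One small point worth making explicit in your third paragraph: when $e$ is finitary and lies in the component of $C$, ``closedness of components under restriction'' alone does not prevent $e|_y$ from equaling $a$ (which does \emph{not} act trivially on $X$); you need the unique-incoming-arrow axiom once more to see that $e|_y=a$ would force $e=c$, which you have excluded.
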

 
\begin{proof}
Suppose that $c$ fixes a non-empty, finite set of ends of $X^*$.  Let $x \in X$ be such that $c|_x = x$.  If $\#\Fix(c) \geq 3$, then there are $y_1, y_2 \in X \setminus \{x\}$ with $y_1 \neq y_2$ and $c(y_i) = y_i$.  Then $c|_{y_i}$ is finitary for $i = 1, 2$, and by Lemma \ref{finitary} one of them fixes infinitely many ends of $X^*$.  It follows that $c$ fixes infinitely many ends of $X^*$, a contradiction.  

Hence $\#\Fix(c) = 2$.  If $c(x) \neq x$, then $\Fix(c) = \{y_1, y_2\} \subseteq X \setminus \{x\}$, and as before we have a contradiction.   Thus $\Fix(c) = \{x, y\}$ for some $y \neq x$.  Let $a = c|_y$.  
If $\#\Fix(a) \geq 2$, then there are $z_1, z_2 \in X \setminus \{y\}$ with $z_1 \neq z_2$ and $a(y_i) = y_i$.  But $c|_{z_i}$ is finitary for $i = 1, 2$, and as before this gives a contradiction.  Hence $\#\Fix(a) \leq 1$, so that $\#\Fix(c) + \#\Fix(a) \leq 3$, and we may apply part (2) of Theorem \ref{treeperms} as in the proof of Theorem \ref{nolongcycle} to get that $A = \{c, a, 1\}$.  If $\#\Fix(a) = 1$, then the restriction of $a$ at this fixed point must be trivial, which implies that $c$ fixes infinitely many ends of $X^*$.  Thus 
$\Fix(a) = \emptyset$.  Conjugating by an appropriate element $\gamma(1, 1, \ldots, 1) \in \Aut(X^*)$ allows us to move $x$ to $0$ and $y$ to $1$, so that $c$ and $a$ have the forms in \eqref{howitis2}.  
Because $\#\Fix(c) + \#\Fix(a) = 2$ we must have equality in \eqref{euler}, which implies that the reduced cycle diagram of $\{\sigma, \tau\}$ has $d-1$ 2-cells.  By Lemma \ref{treecyc} it follows that $\sigma$ and $\tau$ are products of disjoint transpositions, and hence $d$ must be even.  That $\langle A \rangle$ is dihedral follows just as in the proof of Theorem \ref{nolongcycle}.
\end{proof} 

We now have all the tools in place to finish proving our main result. 

\begin{proof}[Proof of Theorem \ref{monodromy}]  Recall that $f \in \C[z]$ is exceptional if there exists a finite, non-empty set $\Sigma$ such that $f^{-1}(\Sigma) \setminus \critf = \Sigma$. Suppose $f$ is non-exceptional, put $d = \deg f$, and let $G \leq \Aut(X^*)$ be a standard action of $\img(f)$ on $X^*$. There is an automaton $A$ that generates $G$ and satisfies the conditions of Theorem \ref{char}. 

Suppose that there is a cycle $C$ in the reduced Moore diagram of $A$ such that some element of $C$ fixes a non-empty, finite set of ends of $X^*$.  If $\#C \geq 2$, then by Theorem \ref{nolongcycle}, $G$ is generated by two elements of the form \eqref{howitis}.  From Theorem \ref{compmon} it follows that the post-critical set of $f$ consists of $\{z_0, z_1\} \subset \C$ where $a = g_{z_0}, b = g_{z_1}$, $z_0 \neq z_1$, and neither $z_0$ nor $z_1$ is critical.  Moreover, $f^{-1}(\{z_0, z_1\}) \setminus \{z_0, z_1\}$ is contained in the set of critical points of $f$, and hence $f$ is exceptional (indeed, by Proposition \ref{chebclass} it is conjugate to $-T_d$ for some odd $d$). If $C = \{c\}$, then let $x \in X$ be such that $c|_x = c$, and let $\Fix(c)$ be the set of fixed points in the action of $c$ on $X$.  By assumption $\Fix(c)$ is non-empty. 
If $\#\Fix(c) \geq 2$, then by Theorem \ref{titus}, $G$ is generated by two elements of the form \eqref{howitis2}. As before Theorem \ref{compmon} implies that $f$ is exceptional (it is conjugate to $T_d$ for some even $d$).  If $\Fix(c) = \{y\}$ with $y \neq x$, then $c|_y$ must be finitary and hence fix either no ends or infinitely many ends by the remarks at the beginning of this section. Thus $c$ either fixes no ends or infinitely many ends, contrary to our assumption. If $\Fix(c) = \{x\}$, then by Theorem \ref{compmon} we have $x = g_{z_0}$ where $z_0$ is non-critical and $f^{-1}(\{z_0\}) \setminus \{z_0\}$ is contained in the set of critical points of $f$.  Once again $f$ must be exceptional (it is conjugate to a polynomial of the form \eqref{oneptexcept}). 

Therefore the reduced Moore diagram of $A$ contains no such cycle $C$. By Corollary \ref{n1cor} we have $\mathcal{F}(G) = 0$.
\end{proof}

In order to prove Proposition \ref{exceptprop} and the remarks preceding it, we make a brief study of exceptional polynomials (see the discussion following Lemma 2.3 in \cite{lkr} for similar remarks in the case of rational functions). \label{exceptdisc} Suppose that $f$ has degree $d$ and is exceptional, with $\Sigma$ a finite set such that $f^{-1}(\Sigma) \setminus \critf = \Sigma$. Following \cite{lkr}, we note that each preimage of a point in $\Sigma$ is either a critical point or in $\Sigma$. Hence, letting $\Gamma = \critf \cap f^{-1}(\Sigma)$, we have
\begin{equation*} 
d \cdot \#\Sigma - \sum_{c \in \Gamma} (\deg(c) - 1) = \#f^{-1}(\Sigma) \leq \#\Sigma + \#\Gamma,
\end{equation*}
where $\deg(c)$ is the local degree of $f$ at $c$. Because $f$ has $d-1$ critical points up to multiplicity, this gives
\begin{equation*} 
(d-1)\#\Sigma = \sum_{c \in \Gamma} (\deg(c) - 1) + \#\Gamma \leq 2(d-1),
\end{equation*}
with equality holding if and only if $f$ has $d-1$ distinct critical points (necessarily each having multiplicity 2), all of which are contained in $f^{-1}(\Sigma)$. 


We now give a complete characterization of the case $\#\Sigma = 2$. While our result is not new, the statement and proof differ in form from the standard treatments in the literature (e.g. \cite[Theorem 19.9]{milnor}).  Recall that the Chebyshev polynomial of degree $d$ is given by $T_d(z) = \cos (d \arccos z)$.  Its critical set $\Gamma$ consists of $d-1$ distinct points and satisfies $\Gamma \cap \{\pm 1\} = \emptyset$, $T_d(\Gamma) = \{\pm 1\}$, $T_d(1) = 1$, $T_d(-1) = (-1)^d$, and $T_d^{-1}\{\pm 1\} = \{\pm 1\} \cup \Gamma$.  

\begin{proposition} \label{chebclass}
Let $f \in \C[z]$ be exceptional, with $\Sigma = \{z_0, z_1\}$ and $z_0 \neq z_1$. Then one of the following holds:
\begin{enumerate}
\item $f(z_0) = z_0$, $f(z_1) = z_1$, and $f$ is conjugate to $T_d$ for some odd $d$. 
\item $f(z_0) = z_1$, $f(z_1) = z_0$, and $f$ is conjugate to $-T_d$ for some odd $d$.
\item $f(z_0) = f(z_1) \in \{z_0, z_1\}$, and $f$ is conjugate to $T_d$ for some even $d$. 
\end{enumerate}
\end{proposition}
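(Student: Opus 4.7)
\textbf{Proof plan for Proposition~\ref{chebclass}:}

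First I would use the Riemann--Hurwitz-type inequality from the discussion on p.~\pageref{exceptdisc}: since $\#\Sigma = 2$, the bound $(d-1)\cdot 2 \leq 2(d-1)$ is an equality, so $f$ has exactly $d-1$ distinct critical points, each of local degree $2$, and every critical point lies in $f^{-1}(\Sigma)$. Because $f(\Sigma) \subseteq \Sigma$, the three cases for $f|_\Sigma$ (both fixed, swapped, or both mapping to a common point) exhaust all possibilities and correspond to (1), (2), (3) respectively. Counting preimages of each $z_i$ with local degrees summing to $d$ then fixes the parity of $d$: in cases (1) and (2) each $z_i$ has one non-critical preimage and $(d-1)/2$ double preimages, so $d$ is odd; in case (3) one point of $\Sigma$ has two non-critical preimages and $(d-2)/2$ double preimages while the other has $d/2$ double preimages, so $d$ is even.

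Next, I would conjugate by an affine map to send $\{z_0, z_1\}$ to $\{1, -1\}$; in case (3) I may further conjugate by $z \mapsto -z$ to arrange $f(1) = 1$. In each case the ramification data yields explicit factorizations
\begin{equation*}
f(z) - 1 \;=\; a\, \ell_+(z)\, P(z)^2, \qquad f(z) + 1 \;=\; a\, \ell_-(z)\, Q(z)^2,
\end{equation*}
where $a$ is the leading coefficient of $f$, $P$ and $Q$ are the monic polynomials vanishing at the critical points mapping to $1$ and $-1$ respectively, and $\ell_\pm$ collect the non-critical preimages. A case-by-case check shows $\ell_+(z)\,\ell_-(z) = z^2-1$ in each setting. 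Since every critical point has local degree $2$, the product $PQ$ is squarefree and $f'(z) = a d\, P(z) Q(z)$ by matching degree and leading coefficient. Multiplying the two factorizations gives $f^2-1 = a^2(z^2-1)(PQ)^2$, whence
\begin{equation} \label{chebODE}
(f'(z))^2 \,(z^2 - 1) \;=\; d^2 \bigl(f(z)^2 - 1\bigr).
\end{equation}

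The proposition then reduces to the claim: any polynomial $f$ of degree $d \geq 2$ satisfying \eqref{chebODE} with $f(1) = \epsilon \in \{\pm 1\}$ equals $\epsilon T_d$. The Chebyshev polynomial itself satisfies \eqref{chebODE}, as one sees by squaring the derivative of the identity $T_d(\cos\theta) = \cos(d\theta)$. For uniqueness I would expand $f(z) = \epsilon + \sum_{k \geq 1} a_k (z-1)^k$ and substitute into \eqref{chebODE}. The coefficient of $(z-1)^1$ gives $2 a_1^2 = 2\epsilon d^2 a_1$, forcing $a_1 = \epsilon d^2$ (the alternative $a_1 = 0$ propagates to make $f$ constant, contradicting $\deg f = d$). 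For $k \geq 2$, a direct calculation shows the coefficient of $(z-1)^k$ takes the form $2 \epsilon d^2(2k-1)\, a_k = R_k(a_1, \ldots, a_{k-1})$; since $2k-1 \neq 0$, this uniquely determines $a_k$. The Taylor expansion of $f$ at $z = 1$ therefore agrees with that of $\epsilon T_d$, so $f = \epsilon T_d$. Combining with the first step: case (1) yields $f = T_d$ for $d$ odd; case (2) yields $f = -T_d$ for $d$ odd; case (3) yields $f = T_d$ for $d$ even. The main obstacle will be verifying that the coefficient of $a_k$ in the Taylor-coefficient recurrence is indeed of the claimed form $2 \epsilon d^2(2k-1)$, which requires carefully isolating the contributions involving $a_k$ on both sides of \eqref{chebODE}.
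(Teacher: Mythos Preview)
Your proposal is correct and essentially complete; the flagged obstacle (that the $u^k$-coefficient of the equation isolates $a_k$ with nonzero coefficient $2\epsilon d^2(2k-1)$) checks out once one tracks where $a_k$ can appear on each side.

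Your route, however, differs from the paper's. After the same factorization step, the paper works with the auxiliary monic factor $g$ (your $P$): from $f+1 = r(z+1)g^2$ and $f-1 = r(z-1)h^2$ it obtains $f' = drgh$, then the coupled first-order system $dh = 2(z+1)g' + g$, $dg = 2(z-1)h' + h$, and by elimination the \emph{linear} second-order ODE
\[
d^2 g \;=\; 4(z^2-1)g'' + (8z-4)g' + g,
\]
whose coefficient recurrence is immediately seen to determine the monic $g$ uniquely; $r$ and hence $f$ are then fixed by the value $f(1)=1$, and comparison with $T_d$ finishes case~(1), with the other cases handled ``similarly.'' You instead multiply the two factorizations to obtain the Pell-type equation $(f')^2(z^2-1)=d^2(f^2-1)$ for $f$ itself and run a Taylor recurrence at $z=1$. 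The advantage of your approach is uniformity: once you observe $\ell_+\ell_- = z^2-1$ in all three cases, a single ODE handles them simultaneously, with the sign $\epsilon=f(1)$ distinguishing $T_d$ from $-T_d$. The trade-off is that your ODE is nonlinear in $f$, so the recurrence needs the extra argument you supply (and the separate exclusion of the $a_1=0$ branch), whereas the paper's linear ODE for $g$ gives the recurrence for free but must be redone in each case.
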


\begin{proof}
We outline an algebraic approach, which differs somewhat from the well-known geometric arguments (see e.g. \cite[Theorem 19.9]{milnor}). First note that by the definition of exceptional polynomial, $f(\Sigma) \subseteq \Sigma$ and $\Sigma$ contains no critical points. By the discussion preceding the Proposition, $f$ has $d-1$ critical points, all of which have multiplicity 2. Applying an appropriate affine conjugation, we may assume that $z_0  = -1$ and $z_1 = 1$. 
In case (1) of the proposition, we then have
\begin{align} \label{eqone}
f(z) + 1 = r(z+1)(g(z))^2 \qquad f(z) - 1 = r(z - 1)(h(z))^2,
\end{align}
where $r \in \C$, and $g, h \in \C[z]$ are monic and relatively prime. 
Differentiating gives $f' = r[2(z+1)g'g + g^2] = r[2(z-1)h'h + h^2]$, but the roots of $f'$ are the same as the roots of $gh$, whence $f' = drgh$, where $d$ is the degree of $f$. Because $g$ and $h$ are relatively prime, we obtain $dh = 2(z+1)g' + g$ and $dg = 2(z-1)h' + h$. Differentiating again and substituting yields 
$$d^2g 
= 4(z+1)(z-1)g'' + (8z - 4)g' + g.$$
This differential equation gives a recurrence relation on the coefficients of $g$; with the assumption that $g$ is monic, this uniquely determines all coefficients of $g$.  Because $f(1) = 1$, we have by \eqref{eqone} that $2 = 2r(g(1))^2$, thereby determining $r$, and thus also $f$. However, $T_d$ clearly satisfies the same conditions as $f$, and thus $f = T_d$. Part (1) of the proposition follows. The other parts proceed similarly. 
\end{proof}

To prove Proposition \ref{exceptprop}, we require the following result. 

\begin{proposition} \label{chebcomp}
Let $\#X = d$ and suppose that $G \leq \Aut(X^*)$ is generated by $\{a, b\}$ where $a$ and $b$ are distinct and non-trivial, $a^2 = b^2 = 1$, and $ab$ is spherically transitive.  
Then $\mathcal{F}(G) = r/4$, where $r$ is the number of elements in $\{a, b\}$ fixing at least one end of $X^*$.    
\end{proposition}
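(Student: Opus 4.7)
The plan is to leverage the fact that $a^2 = b^2 = 1$ and $t := ab$ has infinite order (being spherically transitive), which forces $G$ to be the infinite dihedral group with rotation subgroup $\langle t \rangle$. Let $a_n, b_n, t_n$ denote the images in $\Aut(X^n)$. Since $t_n$ is a $d^n$-cycle and the relation $a t a = t^{-1}$ forces $t_n^2 = 1$ whenever $a_n \in \langle t_n \rangle$, one has $a_n \notin \langle t_n \rangle$ as soon as $d^n > 2$. For such $n$, $G_n$ is the dihedral group of order $2 d^n$, with elements partitioned into the identity, the $d^n - 1$ nontrivial rotations (which fix nothing, as $t_n$ is a $d^n$-cycle), and $d^n$ reflections of order $2$.

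To count fixed points of the reflections, I would choose bijections $X^n \cong \Z/d^n \Z$ compatibly with the projections $X^{n+1} \to X^n$, making $t_n$ correspond to $i \mapsto i+1$. Then $a_n$, which conjugates $t_n$ to $t_n^{-1}$, must take the form $i \mapsto c_n - i$ for some $c_n \in \Z/d^n\Z$, and a general reflection $t_n^k a_n$ is the involution $i \mapsto c_n + k - i$ whose fixed points solve $2i \equiv c_n + k \pmod{d^n}$. For $d$ odd, $\gcd(2, d^n) = 1$, so every reflection has exactly one fixed point; including the identity, the proportion of $G_n$ with a fixed point is $(1+d^n)/(2d^n) \to 1/2$. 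For $d$ even, exactly the reflections with $c_n + k$ even have fixed points (two each), giving $d^n/2$ such reflections and proportion $(1+d^n/2)/(2d^n) \to 1/4$.

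To match this with $r/4$, I would check that when $d$ is odd both $a$ and $b$ have one fixed point at every level and hence (by K\"onig's lemma applied to the tree of fixed vertices) fix an end, so $r = 2$ and $r/4 = 1/2$. When $d$ is even, compatibility of $a$ with projections yields $c_{n+1} \equiv c_n \pmod{d^n}$, so the parity of $c_n$ is constant in $n$; one of $a, b$ then has two fixed points on every $X^n$ (and so fixes an end), while the other has no fixed points beyond some level, giving $r = 1$ and $r/4 = 1/4$. The main subtlety is the bookkeeping around the compatible bijections $X^n \cong \Z/d^n\Z$, in particular establishing the invariance of $c_n \bmod 2$ so as to rule out the case $r = 0$; everything else reduces to a direct count inside the dihedral groups $G_n$.
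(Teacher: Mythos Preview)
Your argument is correct, but it takes a genuinely different route from the paper's. The paper's proof hinges on a single conjugacy observation: in $G_n$ one has $t_n^m\,(a_n t_n^{k})\,t_n^{-m} = a_n t_n^{\,k-2m}$, so every reflection $a(ab)^k$ is conjugate (already in $G$, hence in every $G_n$) to $a$ when $k$ is even and to $b$ when $k$ is odd. Since conjugate elements have the same number of fixed points, the reflections split into two blocks of size $d^n/2$ whose fixed-point behaviour is exactly that of $a$ and $b$ respectively; together with the dichotomy ``$a$ has a fixed point on every $X^n$ (hence fixes an end) or on none for $n$ large'', this gives the count $1 + r\,(d^n/2)$ directly, with no case split on the parity of $d$ and no choice of coordinates.

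Your approach instead fixes compatible identifications $X^n \cong \Z/d^n\Z$, computes $\mathcal{F}(G)$ explicitly as $1/2$ or $1/4$ according to whether $d$ is odd or even, and then separately determines $r$ in each case (via K\"onig's lemma and the invariance $c_{n+1}\equiv c_n \pmod{d^n}$) to confirm the match with $r/4$. This is sound, and it yields the extra information that $r=2$ for $d$ odd and $r=1$ for $d$ even, but it requires more bookkeeping than the paper's conjugacy shortcut. The paper never needs to know the value of $r$; it proves $\mathcal{F}(G)=r/4$ uniformly.
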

\begin{proof}
Because $ab$ is spherically transitive, we have that for any $n \geq 1$ its action on $X^n$ is a $d^n$-cycle.  Clearly conjugation of $ab$ by $a$ gives $(ab)^{-1}$, and it follows that if $G_n$ is the action of $G$ on $X^n$ then $G_n$ is dihedral of order $2d^n$ and a complete list of its elements is given by the actions of $(ab)^k$ and $a(ab)^k$ for $k = 0, \ldots, d^n$.  None of the elements of the form 
$(ab)^k$ for $k = 1, \ldots, d^n - 1$ can have fixed points in $X^n$.  An element of the form $a(ab)^k$ is conjugate to $b$ if $k$ is odd and to $a$ if $k$ is even.  Now the action of $a$ either has a fixed point in $X^n$ for all $n$ (if $a$ fixes an end of $X^*$) or has no fixed points in $X^n$ for $n$ large enough, and similar statements hold for $b$.  Thus for $n$ sufficiently large, the number of elements of $G_n$ fixing at least one point of $X^n$ is 
$1 + r(d^n/2)$.  Dividing by $\#G_n = 2d^n$ and letting $n \to \infty$ gives $\mathcal{F}(G) = r/4$.  
\end{proof}

\begin{proof}[Proof of Proposition \ref{exceptprop}]
Let $f(z)$ have degree $d$, and let $G \leq \Aut(X^*)$ be a standard action of $\img(f)$ on $X^*$. If $f(z)$ is conjugate to $T_d$ for $d$ even, then it follows from Theorem \ref{compmon} that $G$ is generated by two elements of the form \eqref{howitis2}.
Proposition \ref{chebcomp} then applies to show $\mathcal{F}(G) = 1/4$. 
If $f(z)$ is conjugate to $-T_d$ for $d$ odd, then it follows from Theorem \ref{compmon} that $G$ is generated by two elements of the form \eqref{howitis}. If $f(z)$ is conjugate to $T_d$ for $d$ odd, then $G$ is generated by two elements of the form 
$$a = \sigma(a, 1, 1,\ldots, 1) \qquad b = \tau(1, 1, \ldots, 1, b),$$
with assumptions as in \eqref{howitis}. In either case Proposition \ref{chebcomp} applies to show $\mathcal{F}(G) = 1/2$.

\end{proof}

\section*{Acknowledgements}
I would like to thank Lasse Rempe, Juan Rivera-Letelier, and Mikhail Lyubich for helpful comments and references to the literature on exceptional maps. I also extend my thanks to the Institute for Computational and Experimental Research in Mathematics, where I presented and received valuable feedback on some of these results as part of the semester on complex and arithmetic dynamics.

\end{document}